\newtheorem{Definition}{Definition}[subsection]
\newtheorem{Theorem}[Definition]{Theorem}
\newtheorem{Lemma}[Definition]{Lemma}
\newtheorem{Proposition}[Definition]{Proposition}
\newtheorem{Corollary}[Definition]{Corollary}
\newtheorem{Remark}[Definition]{Remark}
\newcommand{\bt}{\begin{Theorem}}
\newcommand{\et}{\end{Theorem}}
\newcommand{\ba}{\begin{eqnarray}}
\newcommand{\ea}{\end{eqnarray}}
\newcommand{\bd}{\begin{Definition}}
\newcommand{\ed}{\end{Definition}}
\newcommand{\bp}{\begin{Proposition}}
\newcommand{\ep}{\end{Proposition}}
\newcommand{\bl}{\begin{Lemma}}
\newcommand{\el}{\end{Lemma}}
\newcommand{\br}{\begin{Remark}}
\newcommand{\er}{\end{Remark}}
\newcommand{\bpf}{\begin{proof}}
\newcommand{\epf}{\end{proof}}
\newcommand{\be}{\begin{equation}}
\newcommand{\ee}{\end{equation}}
\newcommand{\mc}{\mathcal}
\newcommand{\f}{\frac}
\newcommand{\what}{\widehat}
\newcommand{\mf}{\mathfrak}
\newcommand{\R}{\mathbb R}%
\newcommand{\C}{\mathbb C}%
\newcommand{\Z}{\mathbb Z}%
\newcommand{\lt}{\left}%
\newcommand{\rt}{\right}%
\newcommand{\e}{\varepsilon}%
\numberwithin{equation}{section}
\numberwithin{Definition}{section}
\begin{document}

\title[Spectral projection]
 {Image of Schwartz Space Under Spectral Projection}

\author[Jana]{Joydip Jana}

\address{Department of Mathematics, Syamaprasad College; 92, S. P. Mukherjee Road; Kolkata- 700 026 }

\email{joydipjana@gmail.com}

\thanks{The author is thankful to Prof. S. C. Bagchi and Rudra P. Sarkar of Indian Statistical Institute for their useful suggestions}

\thanks{\emph{Mathematical Subject Classification}: 43A80, 43A85, 43A90}
\keywords{Spectral projection, Helgason Fourier transform, Schwartz space, Complex crown}
\date{\today }
\dedicatory{}


\begin{abstract}
Let $X= G/K$ symmetric space of non compact type, where $G$ is a rank-one
connected semisimple Lie group with finite center. We shall look at the
transform $ P_\lambda f(x) = f \ast \varphi_\lambda(x)$, where, $\lambda \in \mathbb C$ and  $\varphi_\lambda$ is the elementary spherical function. We shall try to characterizes the image of  the Schwartz spaces $S^p(X) $ where $0 < p \leq 2$ under the above transform.
\end{abstract}

\maketitle

\section{\textbf{Introduction}}
 \label{sec: Introduction}
Let $X$ be the Riemannian symmetric space realized as $G / K$ where $G$ is a connected, noncompact, real rank-one semi-simple Lie group with finite center.
Let $\varphi_\lambda$
$(\lambda \in \C)$ be the elementary spherical functions of $G$. We denote $ \mc S^p(X)$ ($0 < p \leq 2$) for the $L^p$-Schwartz class functions on $X$.
 For $f \in \mc S^p(X)$ ($0<p\leq 2$) (for the case $1< p \leq 2$, $f\in L^p(X)$) we consider the transform $f
\mapsto P_\lambda f(x)=f \ast \varphi_\lambda(x)$
for each $\lambda$ in a suitable domain. The function $P_\lambda f
$ is an eigenfunction of the Laplacian $\textbf{L}$ of the group $G$, satisfying
$\textbf{L} P_\lambda f(x)= - (1+\lambda^2) P_\lambda f(x)$ and the
transform $f \mapsto P_\lambda f$ is called the generalized spectral
projection. Strichartz in his series of papers \cite{Strichartz88},
\cite{Strichartz89}, \cite{Strichartz91}, \cite{Strichartz92}
initiated the project of reviewing Harmonic Analysis in terms of the
generalized spectral projection. Continuing this project Bray
\cite{Bray96} proved a spectral Paley-Wiener theorem for the
symmetric space $X= G/K$. Ionescu \cite{Ionescu00} characterized the
image  $P_\lambda \left(L^2(X)\right)$. Strichartz \cite{Strichartz89} determined the image of Euclidean Schwartz class functions under spectral projection.\\
The aim of this paper is to characterize the image of the $L^p$-Schwartz
space $\mc S^p(X)$ $(0<p \leq 2)$ under the transform $f
\mapsto P_\lambda f $. For each $f \in \mc S^p(X)$, $P_\lambda f$ is a function on $\mf a^*_\e \times X$ where $\mf a^*_\e =\left\{\lambda \in \C ~|~ |\Im \lambda| < \e= \left( \f{2}{p}-1 \right) \right\}$.

The characterization of the image of $\mc S^p(X)$ under the generalized spectral projection is divided into two parts.   In Section \ref{sec: Necessary Conditions}  we obtain some basic properties (necessary conditions) of the functions $P_\lambda f$ for $f \in \mc S^p(X)$.  Sufficient
conditions for a left-$K$-finite  function $f(\lambda , x)$ on  $\mathfrak a^*_\e \times X$  to be of the form $P_\lambda g (x) = f(\lambda, x)$
for some $g \in \mc S^p(X)$ are taken up in Section \ref{sec: Sufficient Conditions}.
 Finally, in Section \ref{sec: Inverse Paley-Wiener Theorem}, we shall characterize the image of certain subspace of $L^2(X)$ under the above mentioned transform in the light of the inverse Paley-Wiener theorem due to Thangavelu \cite{Veluma07}.
 For this section we refer \cite{Krotz-Gindikin-I06, Krotz-Gindikin-II06, Krotz-Gindikin02, Krotz-Stanton-04, Krotz-Stanton-05}
 and \cite{Veluma07}.

\section{\textbf{Preliminaries}}
\label{sec: Prelminaries}
In this section we shall briefly recall some basic facts and results about noncompact Riemannian symmetric spaces realized as $X = G/K$ where $G$ be a connected noncompact semisimple Lie group with finite center and $K$ a maximal compact subgroup of $G$.  Let $\mf k $ be the Lie algebra of $K$.
 We fix an Iwasawa decomposition $G= KAN$ and let $\mf a$ be the Lie algebra of the abelian subgroup $A$. In this discussion we shall mainly concentrate on the `rank-one' Riemannian symmetric spaces so our associated semisimple Lie group $G$ will be of `real rank-one'. Hence $A$ and $\mf a$ both are of dimension one. Let $\mf a^*$ be the real dual of $\mf a$ and $\mf a^*_\C$ be its complexification. The Killing form induces a positive definite form $\langle \cdot, \cdot \rangle$ on $\mf a^* \times \mf a^*$, let the bilinear extension of this form on $\mf a^*_\C \times \mf a^*_\C$ be denoted by $\langle \cdot, \cdot \rangle_1$. In the rank-one case the restricted roots for the adjoint action of $\mf a$ on $\mf g$ are of the form $\pm \gamma$ and (possibly) $\pm 2\gamma$ with $\gamma \in \mf a^*$. We denote $m_\gamma $ for the multiplicity of the root $\gamma$. Let $\rho \in \mf a^*$ be the half sum of the positive roots. With a proper normalization the linear functional $\rho$ can be identified with the constant function $\rho(X)=1$. The same normalization also identifies $A$, $\mf a$, $\mf a^*$ with $\R$ and $\mf a^*_\C$ with $\C$. The Weyl group in our case $W =\{+1, -1 \}$ acts on $A$, $\mf a$, $\mf a^*$ and $\mf a^*_\C$ simply by multiplication. We denote $\mf a^+$ a cone in $\mf a$ called the positive Weyl chamber.  Let $\mf a^{* +} \subset \mf a^* $  be the cone dual to  $\mf a^+$.   In the rank-one case both  $\mf a^+$ and $\mf a^{*+}$ essentially correspond to the set $\R^+$ of all positive numbers.  Let $\overline{\mf a^+}$ and $\overline{\mf a^{*+}}$ respectively denote the closures of the cones $\mf a^+$ and $\mf a^{*+}$ also let us denote $\overline{A^+} = \exp{\overline{\mf a^+}}$.  The group elements of $A$ will now be denoted by $a_t$ where $t \in \R$ and $\exp t = a_t$.
 Let $H: g= ka_t n \mapsto H(g)=t \in \mf a$ be the Iwasawa-$\mf a$-projection of $G$ in $\mf a$ for the $KAN$ decomposition of the group. \\
 The Cartan decomposition gives $G= K \overline{A^+} K$.
  It induces a diffeomorphism from $K/M \times A^+ \times K$ (or $K \times A^+ \times M\setminus K$)
  onto an open dense subset of $G$ where $M$ is the centralizer of $A$ in $K$ ($M$ also normalizes $N$).
  Let $x^+$ be the $\overline{\mf a^+}$ projection of $x\in G$ for the Cartan decomposition
  $x = k_1 (\exp{x^+}) k_2$ and we denote $|x| = \| x^+\|$. For all $x \in G$ the
   Iwasawa-$\mf a$-projection $H(x)$ and the quantity $|x|$ are related by the inequality:
\begin{equation}
\label{eq:pri:coset distance and A-projection}
\|H(x)\| \leq c |x|, ~~ x \in G, \hbox{~where $c>0$ is a fixed constant}.
\end{equation}
We also note that in the symmetric space $X=G/K$, $|x|$ is the Riemannian distance of $xK$ from the coset $eK$, $e$ being the identity element of $G$.\\
The Haar measure corresponding to the Iwasawa-$KAN$ decomposition is given by
\begin{equation}
\label{eq:pri:Haar measure for KAN}
\int_G f(x) dx = const. \int_K dk  \int_{\mf a^+} e^{2t} d t \int_N f(k a_t n) dn ,
\end{equation}
where  the $const$ stands for a normalizing constant.
In the case of the Cartan decomposition the Haar measure on $G$ is given by
\begin{equation}
\label{eq:pri:Haar measure for KAK}
\int_G f(x) dx = const. \int_K dk_1  \int_{\mf a^+} \Delta(t)  dt \int_K f(k_1 a_t k_2) dk_2 ,
\end{equation}
where the density function $\Delta(t)$ has the estimate $\Delta(t)=O(e^{2t})$.
  A function $f$ will be called right-$K$-invariant  if  it  satisfies $f(xk)=f(x)$ for all $x \in G$ and $k \in K$.
A function on the symmetric space $X=G/K$ can also be considered as a right-$K$-invariant function on the group $G$.
We denote $\mc C^\infty(G)$ for the set of all smooth functions on $G$.
 Let $\mc U(\mf g)$ be
the `universal enveloping algebra' over $\mf g$ and $\Omega$ be the Casimir element of $\mc U(\mf g)$.
The action of the Laplace-Beltrami operator $\textbf{L}$ on $X$ is defined by the action
 of Casimir operator $\Omega$: $ \textbf{L} f(xK)= f(x; \Omega)$ for all $x \in G$.
  In our discussion the following eigenspaces of the operator $\textbf{L}$
\begin{equation}
\label{lambda eigenspace of L}
\mc E_\lambda(X)= \{g \in \mc C^\infty(X)~|~ \textbf{L}g(x)= -(\lambda^2 + 1) g(x) \}, \mbox{~for each~} \lambda \in \C
\end{equation}
will be of main interest.
For each $\lambda \in \C$, let $\varphi_\lambda$ be the elementary spherical function associated with $\lambda$. We recall that $\varphi_\lambda$ ($\lambda \in \C$) is given by the following integral representations \cite{Gangolli88}:
\begin{equation}
\label{eq:pri:elementary spherical functions def-1}
\varphi_\lambda(x)= \int_K e^{-(i \lambda+ 1)H(x^{-1}k)} dk=\int_K e^{(i \lambda - 1)H(x k )} dk.
\end{equation}
We recollect some of the very basic properties of the elementary spherical functions, which will be used throughout.
\begin{Proposition}
\label{pro:pri:properties of the elementary sp. funct.}
\begin{enumerate}
  \item The expression $\varphi_\lambda(x)$ is a bi-$K$-invariant $\mc C^\infty$ function in the $x$ variable and it is a $W$-invariant holomorphic function in $\lambda \in \C$.
  \item For each $\lambda \in \C$, $x \mapsto \varphi_\lambda(x)$ is a joint eigenfunction of all the $G$-invariant differential operators on $G/K$; in particular for the Laplace-Beltrami operator we have:
      \begin{equation}
      \label{eq:pri:ele. sp.funct. as eigen funct. of Laplacian}
      \textbf{L} \varphi_\lambda(\cdot) = -(\lambda^2 + 1) \varphi_\lambda(\cdot), \hspace{.3in}  \lambda \in \C.
      \end{equation}
      \item For each $\lambda \in \C$ and $x, y \in G$, the following property is referred to as the `symmetric property of the elementary spherical functions'
          \begin{equation}
          \label{eq:pri:symmetric property of the ele. sp. funct.}
          \varphi_{\lambda}(x^{-1}y) = \int_K e^{-(i \lambda+1)H(y^{-1}k^{-1})} e^{(i \lambda - 1)H(x^{-1}k^{-1})} dk.
          \end{equation}
          \item For any given $\textbf{D}, \textbf{E} \in \mc U(\mf g)$, there exists a constant $c>0$ such that \vspace{-.03in}
              \begin{equation}
              \label{eq:pri:estimate od sp.func. under the action of U g}
              |\varphi_\lambda(\textbf{D}; x; \textbf{E})| \leq c (|\lambda|+1)^{deg \textbf{E} + deg \textbf{D}} \varphi_{i \Im {\lambda}}(x) \hspace{.2in} \mbox{for all~} x \in G ,~\lambda \in \C.
              \end{equation}
          \item Given any polynomial $P$ in the algebra $\mathbf{S}(\mf a)$ of symmetric polynomials on $\mf a^*$, there exists a positive constant $c$ such that:
              \begin{equation}
              \label{eq:pri:estimate od sp.func. under the action of S a*}
              \lt|P\lt(\f{\partial}{\partial \lambda}\rt) \varphi_\lambda(x) \rt| \leq c (1 + |x|)^{deg P} \varphi_{i \Im{\lambda}}(x), \hspace{.2in}  x \in G.
              \end{equation}
          \item For all $t$ and $\lambda$ in $\overline{\R^+}$ we have:
              \begin{equation}
              \label{eq:pri:estimate od sp.func. on the imaginary axis}
              0 < \varphi_{- i \lambda} (a_t) \leq e^{\lambda t} \varphi_0(a_t).
              \end{equation}
              \item For all $x \in G$, we have $0 < \varphi_0(x)=\varphi_0(x^{-1}) \leq 1$;
              \item  For all $t \in \overline{\R^+}$, we have the following two-side estimate of $\varphi_0$:
                  \begin{equation}
                  \label{eq:pri:apraori estimate of phi 0}
                  e^{-t} \leq \varphi_0(a_t) \leq c (1 + t)^{a} e^{- t},
                  \end{equation}
                  where $c, a>0$ are group dependent constants;
\end{enumerate}
\end{Proposition}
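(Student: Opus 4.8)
The plan is to dispatch the eight assertions according to their source: items (1)--(3) and (6)--(7) come straight out of the integral representation \eqref{eq:pri:elementary spherical functions def-1}, the eigenvalue statement in (2) from the eigenfunction property of the Poisson kernel $e^{(i\lambda-1)H(\cdot)}$, and items (4)--(5) and (8) from Harish-Chandra's asymptotic theory, for which I would invoke \cite{Gangolli88} together with Helgason's \emph{Groups and Geometric Analysis} rather than reprove anything.

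For (1): bi-$K$-invariance in $x$ is immediate from \eqref{eq:pri:elementary spherical functions def-1}, since left translation of $x$ by $K$ is absorbed by the substitution $k\mapsto k'k$ in the Haar integral and right translation by $K$ does not alter the Iwasawa $\mf a$-component of the relevant group elements; $\mc C^\infty$-dependence on $x$ and holomorphy in $\lambda$ follow by differentiation under the integral sign, legitimate because $K$ is compact and $k\mapsto e^{(i\lambda-1)H(xk)}$ is smooth in $x$ and entire in $\lambda$. The identity $\varphi_{\lambda}=\varphi_{-\lambda}$ (hence $W$-invariance) I would obtain from uniqueness of the normalized spherical eigenfunction attached to the $W$-invariant eigenvalue $-(\lambda^2+1)$, or equivalently from the functional equation of the $c$-function (standard theory, see \cite{Gangolli88}). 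The symmetric formula (3) I would get from \eqref{eq:pri:elementary spherical functions def-1} by applying the Iwasawa cocycle identity to $H(x^{-1}yk)$, decomposing through the Iwasawa component of $yk$. For the eigenvalue part of (2), the single input is that, under the normalization $\rho\equiv 1$, the function $x\mapsto e^{(i\lambda-1)H(x)}$ is a joint eigenfunction of every $G$-invariant differential operator, with $\textbf{L}$-eigenvalue $-(\lambda^2+1)$; applying such an operator under the integral in \eqref{eq:pri:elementary spherical functions def-1} then yields \eqref{eq:pri:ele. sp.funct. as eigen funct. of Laplacian} and the joint eigenfunction property.

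For the bounds (4)--(5): differentiating \eqref{eq:pri:elementary spherical functions def-1} to the left and right by $\textbf{D},\textbf{E}\in\mc U(\mf g)$ brings down, inside the $K$-integral, polynomials in the derivatives of the Iwasawa projection of total degree $\deg\textbf{D}+\deg\textbf{E}$; bounding those (smooth, hence bounded over the compact $K$) terms produces the factor $(|\lambda|+1)^{\deg\textbf{D}+\deg\textbf{E}}$, while the remaining integrand has modulus $e^{(\Im\lambda-1)H(xk)}$, whose $K$-average is exactly $\varphi_{i\Im\lambda}(x)$. Likewise, differentiating in $\lambda$ brings down powers of $iH(xk)$, and $\|H(xk)\|\le c|xk|=c|x|$ from \eqref{eq:pri:coset distance and A-projection} together with the bi-$K$-invariance of $|\cdot|$ supplies the factor $(1+|x|)^{\deg P}$. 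I expect the only real friction to be extracting the \emph{sharp} exponent in (4) from this crude integral estimate alone; there one genuinely needs Harish-Chandra's expansion of $\varphi_\lambda$, so I would quote it.

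Finally, for (6): with $t,\lambda\ge0$, \eqref{eq:pri:elementary spherical functions def-1} gives $\varphi_{-i\lambda}(a_t)=\int_K e^{\lambda H(a_tk)}e^{-H(a_tk)}\,dk$, and since $H(a_tk)\le\|H(a_tk)\|\le|a_t|=t$ the first exponential is $\le e^{\lambda t}$; positivity is clear from the integrand. For (7), $\varphi_0(x)=\int_K e^{-H(xk)}\,dk>0$, the bound $\varphi_0\le1$ follows from $\varphi_0$ being positive-definite (the spherical function of the unitary principal series at $\lambda=0$) with $\varphi_0(e)=1$, and $\varphi_0(x^{-1})=\varphi_0(x)$ is the case $\lambda=0$ of $\varphi_\lambda(x^{-1})=\varphi_{-\lambda}(x)$ combined with $W$-invariance. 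For the two-sided estimate (8), the lower bound $e^{-t}\le\varphi_0(a_t)$ comes from Jensen's inequality for $u\mapsto e^{-u}$ in the $K$-integral together with $\int_K H(a_tk)\,dk\le t$, and the upper bound $\varphi_0(a_t)\le c(1+t)^{a}e^{-t}$ is Harish-Chandra's asymptotics of $\varphi_0$ on $\overline{A^+}$ in the rank-one setting, which I would take from \cite{Gangolli88}. In sum, the genuinely non-elementary ingredients are the Harish-Chandra series behind (4) and (8); the rest is bookkeeping on \eqref{eq:pri:elementary spherical functions def-1}.
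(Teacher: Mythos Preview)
Your proposal is correct and in fact considerably more detailed than the paper's own treatment: the paper gives no argument whatsoever for Proposition~\ref{pro:pri:properties of the elementary sp. funct.}, simply citing \cite[Ch.~4]{Gangolli88} for (i), \cite{Helgason-gga} for (ii), \cite[Ch.~III, Theorem~1.1]{Helgason-gas} for (iii), \cite[Sec.~4.6]{Gangolli88} and \cite[Proposition~3]{Anker91} for (iv)--(vi), and \cite[Theorems 4.6.4--4.6.5]{Gangolli88} together with \cite{Anker87} for (vii)--(viii). So where the paper defers everything to the literature, you supply honest sketches for the elementary parts and reserve citation for the genuinely deep inputs (the Harish-Chandra expansion behind (iv) and the upper bound in (viii)); that is exactly the right division of labour.

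Two small corrections. In your argument for (iv), the modulus of the integrand is $e^{(-\Im\lambda-1)H(xk)}$, not $e^{(\Im\lambda-1)H(xk)}$; your conclusion that the $K$-average equals $\varphi_{i\Im\lambda}(x)$ is nonetheless correct. For the lower bound in (viii), Jensen is unnecessary: the pointwise inequality $H(a_tk)\le t$ (which you already use) gives $e^{-H(a_tk)}\ge e^{-t}$ directly, and integrating over $K$ finishes it. Also be aware that in (iv) the $x$-derivatives of $H(xk)$ are \emph{not} uniformly bounded in $x$ merely by compactness of $K$; you are right to flag this as the point where one must invoke the literature (Anker's argument or Harish-Chandra's series) rather than push the crude estimate.
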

Property (i) is a very basic fact which follows from the definition. For a proof one can see \cite[Ch. 4]{Gangolli88}. Property (ii) was proved by Helgason \cite{Helgason-gga}. For (iii) we refer to
\cite[Ch. III, Theorem 1.1]{Helgason-gas}. The estimates (iv), (v), and (vi) follows from the results in \cite[Sec. 4.6]{Gangolli88}. For a direct and a simple proof of (iv) and (v) one can see \cite[Proposition 3]{Anker91}. The estimate (vii) of $\varphi_0$ is due to Harish-Chandra. A proof of this can be found in
\cite[Theorem 4.6.4, Theorem 4.6.5]{Gangolli88}. We should note that a sharper
two-sided estimate of $\varphi_0$ is given by Anker  \cite{Anker87}.
\vspace{.1in}

Let $\delta$ be an unitary irreducible representation of K i.e
$\delta \in \widehat{K}$ with $V_\delta$ (a finite dimensional
vector space) the representation space. Let $\chi_\delta$ stand for
the character of the representation $\delta $. Let $ V_\delta^M$ be
the subspace of $V_\delta$ fixed under $\delta|_M$; i.e $ V_\delta^M
= \{ v \in V_\delta ~| ~\delta(m)v = v ~ \forall m \in M\}$. Kostant
\cite{Kostant69} proved that for a rank-one group the dimension of $V_\delta^M
$ is 0 or 1. Let $\widehat{K}_M$ be the set of all equivalence
classes of irreducible unitary representation $\delta$ of K for
which $V_\delta^M \neq \{0\}$. For our result we choose $\delta \in
\widehat{K}_M$.


As, $\delta \in \widehat{K}_M$, $\delta(k)$ is a unitary matrix of
order $d_\delta$. So  $\|\delta(k)\|_{\mathbf{2}} = d_\delta^{\frac{1}{2}}$
where, $\|\cdot\|_{\mathbf{2}}$ denotes the Hilbert Schmidt norm.
One can associate a norm $|\delta|$ to each unitary irreducible
representation $\delta$ of $K$ (for explicit constriction of this norm we refer \cite{Eguchi76}).
From
Weyl's dimension formula we can choose an $r \in \mathbb Z^+$ and a
positive constant $c$ independent of $\delta$ such that
\begin{eqnarray}\label{delta-estimate}
\|\delta (k)\|_{\mathbf{2}} &\leq &c~(1+|\delta|)^r
\end{eqnarray}
for all $k \in K$. Thus
$d_\delta \leq c' (1+|\delta|)^{2r}$
clearly, again $c'$ is independent of the chosen $\delta$.\\
For any $f \in \mc C^\infty(X)$ we put:
\begin{equation}
\label{eq:pri:matrix valud delta projection}
f^\delta(x) = d_\delta \int_K f(kx) \delta(k^{-1}) dk.
\end{equation}
Clearly, $f^\delta$ is a $\mc C^\infty$ map from $X$ to $Hom(V_\delta, V_\delta)$ satisfying
\begin{equation}
\label{eq:pri:property of f delta}
f^\delta(kx) = \delta(k) f^\delta(x), ~~\mbox{for all~} x \in X, k \in K.
\end{equation}
Any function satisfying the property (\ref{eq:pri:property of f delta}) will be referred to as (a $d_\delta\times d_\delta$ matrix valued) left $\delta$-type function. For any function space $\mc E(X) \subseteq \mc C^\infty(X)$, we write $\mc E^\delta(X)= \{f^\delta~|~f \in \mc E(X) \}$.  We shall denote by $\check{\delta}$ the \emph{contragradient} representation of the representation $\delta \in \what{K}_M$.
A function $f$ will be called a scalar valued left-$\delta$-type function if $ f = d_{\delta} \chi_{\delta} \ast f$, where the operation $\ast$ is the convolution over $K$. For any class of scalar valued  functions $\mc G(X)$ we shall denote
$\mc G(\delta, X) = \{g \in \mc G(X)~|~g = d_{\delta} \chi_{\delta} \ast g \}$.
Throughout our discussion we fix the notation $\mc D(X)$ for the subclass of functions in $\mc C^\infty(X)$ which are of compact support. The following theorem, due to Helgason,  identifies the two classes $\mc D^\delta(X)$ and $\mc D(\check{\delta}, X)$ corresponding to each $\delta \in \what{K}_M $.
\begin{Theorem}
\label{theo:pri:left delta left delta check identification}
\emph{[Helgason \cite[Ch.III, Proposition 5.10]{Helgason-gas}]}\\
The map $\mc Q: f \mapsto g$,  $g(x)=tr\lt(f(x)\rt)$  $(x \in X)$ is a homeomorphism from $\mc D^\delta(X)$ onto $\mc D(\check{\delta}, X)$ and its inverse is given by $g \mapsto f=g^\delta$.
\end{Theorem}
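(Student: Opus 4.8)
The approach is to produce the inverse map explicitly and verify the four statements that together say $\mc Q$ is a homeomorphism: (a) $\mc Q$ carries $\mc D^\delta(X)$ into $\mc D(\check\delta,X)$; (b) the candidate inverse $\mc R\colon g\mapsto g^\delta$ carries $\mc D(\check\delta,X)$ into $\mc D^\delta(X)$; (c) $\mc R\circ\mc Q=\mathrm{id}$ and $\mc Q\circ\mc R=\mathrm{id}$; (d) $\mc Q$ and $\mc R$ are continuous for the usual topology on $\mc D(X)$. The computations in (a)--(c) rest on Schur's orthogonality relations for the matrix coefficients $k\mapsto\delta(k)_{ij}$ together with the unitarity of $\delta$; in particular I would first record the identity
\[
 d_\delta\int_K tr(\delta(k)A)\,\delta(k^{-1})\,dk = A,\qquad A\in Hom(V_\delta,V_\delta),
\]
and the elementary bookkeeping fact $tr\,\delta(k^{-1})=\overline{\chi_\delta(k)}=\chi_{\check\delta}(k)$, which is the precise reason $\delta$ turns into $\check\delta$ under the trace. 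Recall also that, by definition, $\mc D(\check\delta,X)$ is the fixed-point set inside $\mc D(X)$ of the idempotent $g\mapsto d_{\check\delta}\,\chi_{\check\delta}\ast g$ ($\ast$ being the convolution over $K$ used in that definition), and that $\mc D^\delta(X)=\{F^\delta : F\in\mc D(X)\}$.

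For (a): if $f=F^\delta$ with $F\in\mc D(X)$, then
\[
 g(x)=tr(f(x))=d_\delta\int_K F(kx)\,tr(\delta(k^{-1}))\,dk=d_\delta\int_K F(kx)\,\chi_{\check\delta}(k)\,dk,
\]
which is smooth and has $\mathrm{supp}\,g=\mathrm{supp}\,f\subseteq K\cdot\mathrm{supp}\,F$ (compact), hence lies in $\mc D(X)$; moreover the right-hand side is the $\check\delta$-isotypic projection of $F$ in the $K$-variable, so $d_{\check\delta}\chi_{\check\delta}\ast g=g$ by idempotence and $g\in\mc D(\check\delta,X)$. Statement (b) is immediate, with $\mathrm{supp}\,g^\delta\subseteq K\cdot\mathrm{supp}\,g$. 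For $\mc R\circ\mc Q=\mathrm{id}$: any $f\in\mc D^\delta(X)$ is of left $\delta$-type by (\ref{eq:pri:property of f delta}), so $f(kx)=\delta(k)f(x)$ and
\[
 (tr\circ f)^\delta(x)=d_\delta\int_K tr(f(kx))\,\delta(k^{-1})\,dk=d_\delta\int_K tr(\delta(k)f(x))\,\delta(k^{-1})\,dk=f(x)
\]
by the displayed Schur identity with $A=f(x)$. For $\mc Q\circ\mc R=\mathrm{id}$: if $g\in\mc D(\check\delta,X)$ then $tr(g^\delta(x))=d_\delta\int_K g(kx)\,\chi_{\check\delta}(k)\,dk$, which is the $\check\delta$-isotypic projection of $g$ on $K$ and hence equals $g(x)$ since $g\in\mc D(\check\delta,X)$. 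In particular every $g\in\mc D(\check\delta,X)$ equals $\mc Q(g^\delta)$ with $g^\delta\in\mc D^\delta(X)$, so the image of $\mc Q$ is all of $\mc D(\check\delta,X)$, and the two composites being the identity exhibit $\mc Q$ as a bijection with inverse $\mc R$.

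It remains to prove bicontinuity (d). Equip $\mc D(X)$ with its usual topology, the strict inductive limit over $R>0$ of the Fr\'echet spaces of smooth functions supported in the closed metric ball $\overline{B}_R$ about $eK$. Since $K$ acts on $X$ by isometries fixing $eK$ (the remark following (\ref{eq:pri:coset distance and A-projection})), one has $K\cdot\mathrm{supp}\,h\subseteq\overline{B}_R$ whenever $\mathrm{supp}\,h\subseteq\overline{B}_R$, so both $\mc Q$ and $\mc R$ respect each level of the limit; on a fixed level, each map is of the shape ``differentiate under the $\int_K$ sign, then multiply by the bounded smooth matrix $k\mapsto\delta(k^{-1})$ and/or take a trace'', which is continuous for every $\mc C^\infty$ seminorm. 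Hence $\mc Q$ and $\mc R$ are continuous, and being mutually inverse, $\mc Q$ is the asserted homeomorphism. The only genuinely delicate point is the convention bookkeeping around the Schur relations and the interchange $\delta\leftrightarrow\check\delta$ --- it is exactly the $k^{-1}$ in the definition of $f^\delta$ that makes $tr$ convert left $\delta$-type into left $\check\delta$-type --- and, connected with that, making sure the image of $\mc Q$ is \emph{exactly} $\mc D(\check\delta,X)$ and not some proper subspace, which is what the relation $\mc Q\circ\mc R=\mathrm{id}$ on $\mc D(\check\delta,X)$ guarantees.
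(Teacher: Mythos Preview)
The paper does not actually supply a proof of this theorem: it is stated as a citation (Helgason \cite[Ch.~III, Proposition~5.10]{Helgason-gas}) and used as a black box. So there is nothing in the paper to compare your argument against.

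That said, your argument is the standard one and is essentially correct. The Schur identity
\[
 d_\delta\int_K tr(\delta(k)A)\,\delta(k^{-1})\,dk = A
\]
is exactly what drives $\mc R\circ\mc Q=\mathrm{id}$, and the reduction of $\mc Q\circ\mc R$ to the idempotence of the $\check\delta$-isotypic projector is the right idea. One point worth tightening: in your computation of $tr(g^\delta(x))$ you write $d_\delta\int_K g(kx)\,\chi_{\check\delta}(k)\,dk$ and identify this with the $\check\delta$-projection of $g$. Under the paper's convention (visible in the proof of the lemma in Section~\ref{sec: Sufficient Conditions}) the convolution is $(\chi\ast g)(x)=\int_K\chi(k^{-1})g(kx)\,dk$, so what you actually obtain is $d_\delta(\chi_\delta\ast g)(x)$; the identification with $d_\delta(\chi_{\check\delta}\ast g)(x)$ then uses that $\chi_\delta(k^{-1})=\chi_{\check\delta}(k)$ together with the fact that characters are class functions and Haar measure is inversion-invariant. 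You clearly have this in mind, but it is worth making the substitution explicit so the $\delta\leftrightarrow\check\delta$ bookkeeping is airtight. The continuity argument in (d) is fine.
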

A consequence of the  `Peter-Weyl theorem' can be stated  \cite[Ch.IV, Corollary 3.4]{Helgason-gga} in the form that  any $f \in \mc C^\infty(X)$ has the decomposition
\begin{equation}
\label{eq:pri:Peter-Weyl decomposition}
f(x)= \sum_{\delta \in \what{K}_M} tr(f^\delta(x)),
\end{equation}
where the convergence is in the sense of uniform convergence on compacta.
A function $f \in \mc C^\infty(X)$ is said to be left-$K$-finite if there exists a finite subset $\Gamma(f) \subset \what{K}_M$ (depending on the function $f$)
such that  $tr(f^\gamma) = 0$ for all $\gamma \in \what{K}_M \setminus \Gamma(f) $. For any class $\mf H(X) \subseteq \mc C^\infty(X) $ of functions we shall denote by $\mf H(X)_K$ the subclass consisting of the  left-$K$-finite functions. Let $\Gamma$ be a fixed subset (finite or infinite) of $\what{K}_M$. Then we shall use the notation $\mf H(\Gamma ; X)$ for the  subclass of $\mf H(X)$
\begin{equation}
\label{eq:pri:finite left K type defn}
\mf H(\Gamma ; X)= \{g \in \mf H(X)~|~ g^\delta = 0, \mbox{~for all~} \delta \in \what{K}_M \setminus \Gamma \}.
\end{equation}

 The $p$th Schwartz space $\mc S^p(X) $ ($1< p \leq 2$) on $X$ is the class
of $\mc C^\infty $ functions $f$ on $X$ with the decay condition: for each
$D, E \in \mathcal U(\mathfrak g)$ and $n \in \Z^+ \cup \{0\}$
\begin{equation}\label{Schwartz space on X}
\mu_{D, E, n}(f) :=~ \sup_{x \in X} |f(D;x;E)| (1+|x|)^n
\varphi_0^{-\frac{2}{p}}(x) < +\infty.
\end{equation}
The quantities $\mu_{D, E, n}(f)$ gives a countable family of
seminorms on the space $\mc S^p(X)$ and the topology induced by this countable family  makes $\mc S^p(X)$  a Fr\'{e}chet space.

Let us denote $\mc S^p(\check{\delta}, X) = \{ f \in \mc S^p(X)~| ~ f = d_\delta~\chi_{\check{\delta}} \ast f ~\}$ and  ${\mc S^{p}}^\delta(X) = \{ f^\delta~ |~ f \in \mc S^p(X) \} $. Being a closed subspace of $\mc S^p(X)$ the space $\mc S^p(\check{\delta}, X)$ is also a Fr\'{e}chet space with the topology induced from $\mc S^p(X)$. The $Hom(V_\delta, V_\delta)$ valued function space ${\mc S^{p}}^\delta(X)$ is also a Fr\'{e}chet space with respect to the topology induced by the countable family of seminorms: for $D, E \in \mc U(\mf g)$ and $n \in \Z^+ \cup \{ 0\}$
\begin{equation}
\label{seminorms on the operator valued SW space}
\mu_{D, E, n}(f^\delta):= \sup_{x \in X} \|f^\delta(D;x;E)\|_{\mathbf{2}} (1+|x|)^n
\varphi_0^{-\frac{2}{p}}(x) < +\infty.
\end{equation}
Note that for the sake of simplicity we keep the same notation for the seminorms for both the spaces $\mc S^p(X)$ and ${\mc S^{p}}^\delta(X)$.
Clearly the spaces $\mc D(\check{\delta}, X)$ and $\mc D^\delta(X)$ respectively are dense subspaces of the Schwartz spaces $\mc S^p(\check{\delta}, X)$ and ${\mc S^{p}}^\delta(X)$ with the respective Schwartz space topologies.
 \begin{Remark}
 \label{Isomorphism between the scalar and op. valued SW spaces}
 The topological isomorphism described in Therem \ref{theo:pri:left delta left delta check identification} can be extended from the Schwartz space ${\mc S^{p}}^\delta(X)$ onto $\mc S^p(\check{\delta}, X)$.
 \end{Remark}
For any function $f \in \mc S^p(X)$, the Helgason
Fourier transform (HFT) $ \mc F f$ is defined by
\begin{equation}\label{Helgason Fourier transform}
\mc F f(\lambda, kM) = \int_X f(x) ~e^{(i\lambda -1)H
(x^{-1}k)} dx.
\end{equation}
For $f \in \mc S^p(X)$ the function $
\mc F f$ is defined on the domain $\mf a^*_\e \times K/M$.  Now let $\mc S(\mf a^*_\e \times K/M)$ denote the space consisting of the $\mc C^\infty$ functions $h$ on $\mf a^* \times K/M$ which satisfy the following conditions:
\begin{enumerate}
  \item For each fixed $k \in K$ the function $\lambda \mapsto h(\lambda, kM )$ extends to $Int \mf a^*_\e$ as a holomorphic function and it extends as a continuous function to the closed strip $\mf a^*_\e$.
  \item For any $\lambda \in Int \mf a^*_\e$ and $x \in G$ we have $\check{h}(-\lambda, x)= \check{h}(\lambda, x)$ where $$\check{h}(\lambda,x)=\int_K h(\lambda, \texttt{K}(xk)M)e^{(i\lambda-1)H(xk)} dk, $$
      where $\texttt{K}(xk)$ is the $K$ part of $xk\in G$ in the Iwasawa $KAN$ decomposition.
  \item For each $m, n \in \Z^+$ and $P \in \mathbf{S}(\mf a)$
  \begin{equation}\label{SW semi norm for image of HFT}
  \sup_{\lambda \in Int \mf a^*_\e; k \in K/M} \left|P\left(\f{d}{d\lambda}\right) h(\lambda, k: \omega_{\mf k}^m)\right| (1 + |\lambda|)^n < +\infty
  \end{equation}
  where $\omega_{\mf k}$ is the Casimir element of $\mf k$.
 \end{enumerate}
The space $\mc S(\mf a^*_\e \times K/M)$ becomes a Fr\'{e}chet space with the topology induced by the countable family of seminorms (\ref{SW semi norm for image of HFT}). It can be shown that the HFT is a continuous mapping of $\mc S^p(X)$  into  $\mc S(\mf a^*_\e \times K/M)$ \cite[Theorem 3.1]{Eguchi76}.
Let us fix the notation $\mc F f(\lambda, kM) =
\mc F f(\lambda, k)$. The Inversion formula for HFT for $f \in
\mc S^p(X)$ is given by

\be \label{HFT Inversion} f(x) = \frac{1}{2} \int_{\mathfrak
a^*} \int_{K}  \mc F f(\lambda, k)~e^{-(i\lambda +1)H(x^{-1}k)}
|\textbf{c}(\lambda)|^{-2} d \lambda~ dk. \ee

Here $\textbf{c}(\lambda)$ is the
Harish-Chandra $\textbf{c}$-function which is completely known (see,
\cite[ Chap. IV]{Helgason-gga}; \cite[ Sect. 4.7]{Gangolli88}). For
our purpose we shall only need the following simple estimate \cite{Anker92} : for
constants $c,b
> 0$
\be \label{estimate of  c funct.} |\textbf{c}(\lambda)|^{-2}
\leq~c(|\lambda| + 1)^b ~~\mbox{ for all $\lambda \in \mathfrak a^*$}.
\ee

Let $f \in \mc S^p(X)$ then $f^\delta \in \mc S^p_\delta(X)$. The HFT of
$f^\delta$  is defined similarly as (\ref{Helgason Fourier
transform}) by
\begin{equation}
\mc F(f^\delta)(\lambda, kM) = \int_G f^\delta(x) e^{(i
\lambda-1)H(x^{-1}k)} dx,
\end{equation}
here the integration is done on each matrix entry. It can be shown
that $$\mc F(f^\delta)(\lambda, kM) = \delta(k)
\mc F(f^\delta)(\lambda, eM)=\delta(k)
\widetilde{f^\delta}(\lambda),$$ here we are denoting
$\mc F(f^\delta)(\lambda, eM)$ by
$\widetilde{f^\delta}(\lambda)$. The function
$\widetilde{f^\delta}(\lambda)$ can be represented by the following
(for details see \cite{Jana}):
\begin{eqnarray}\label{def delta spherical}
\widetilde{f^\delta}(\lambda) = d_\delta \int_X
trf^\delta(x)\Phi_{\overline{\lambda}, \delta}^*(x) dx ,
\end{eqnarray}
where for each $\delta \in \what{K}_M$ and $\lambda \in \C$, the function
\begin{equation}
\label{eq:pri:gen. sp. funct.}
\Phi_{\lambda, \delta}(x) = \int_K e^{-(i \lambda+ 1)H(x^{-1}k)} \delta(k) dk, \hspace{.2in}  x\in G,
\end{equation}
is called the `generalized spherical function' of class $\delta$. For each $x \in G$, $\Phi_{\lambda, \delta}(x)$  is an operator in $Hom(V_\delta, V_\delta)$. Taking point-wise adjoints leads to the expression
\begin{equation}
\label{eq:pri:adjoint of gen. sp. funct.}
\Phi_{\overline{\lambda}, \delta}^*(x):=\Phi_{\overline{\lambda}, \delta}(x)^* = \int_K e^{(i \lambda -1)H(x^{-1}k)} \delta(k^{-1}) dk, \hspace{.2in}  x\in G.
\end{equation}
We note that from the  Iwasawa decomposition, if $x \in G$ and $\tau\in K$, $H(\tau x)= H(x)$. Hence, the expressions (\ref{eq:pri:gen. sp. funct.}) and (\ref{eq:pri:adjoint of gen. sp. funct.}) show that both $\Phi_{\lambda, \delta}$ and $\Phi_{\overline{\lambda}, \delta}^*$ can be considered as functions on the  space $X=G/K$. The transform $f^\delta \mapsto \widetilde{f^\delta}$ given by the integral (\ref{def delta spherical}) will be referred as the $\delta$-spherical transform.
We list out some basic properties of the generalized
spherical functions $\Phi_{\lambda, \delta}$ in the following remark
\begin{Remark}
\label{properties of the gen-sp functions}

\begin{enumerate}
\item
The following can  easily be checked from the  integral representations given in
(\ref{eq:pri:gen. sp. funct.}) and (\ref{eq:pri:adjoint of gen. sp. funct.})
of the generalized spherical function and it's adjoint
\begin{itemize}
\item
for all $k \in K$, $\Phi_{{\lambda}, \delta}(kx) =
\delta(k) \Phi_{{\lambda}, \delta}(x)$  and
$\Phi^*_{\overline{\lambda}, \delta}(kx) = \Phi^*_{\overline{\lambda}, \delta}(x)
\delta(k^{-1})$.
\item
Let $v \in V_\delta$ and $ m \in M$ then $\delta(m)
\left( \Phi^*_{\overline{\lambda}, \delta}(x) v\right)=
\Phi^*_{\overline{\lambda}, \delta}(x) v$.
\end{itemize}
This shows that $\Phi^*_{\overline{\lambda}, \delta} (\cdot) $ is a $ Hom
(V_\delta,V_\delta^M)$ valued function on $X$. Hence, the $\delta$-spherical transform
$\widetilde{f^\delta}(\cdot)$ as defined in (\ref{def delta spherical}) is a $ Hom
(V_\delta,V_\delta^M)$   valued function on $\C$.
\item
For each $\lambda \in \C$ and $\delta \in \widehat{K}_M$, the functions
$x \mapsto \Phi_{\lambda , \delta}(x)$ is a joint eigenfunction  of the algebra $\mathbf{D}(X)$ of all $G$-invariant differential operators on $X$.
\item
For each fixed $\delta \in \widehat{K}_M$ there exists a polynomial
 \cite[ Theorem 5.15, ch-III, $\S$5, p-289 ]{Helgason-gas}
$Q_\delta(1-i\lambda)$ of the complex variable $i \lambda$
such that: for all $\lambda \in \C$
\begin{eqnarray} \label{Intro- of kostant poly1}
Q_\delta(1-i \lambda) \Phi_{\lambda, \delta} (\cdot) &=&
Q_\delta(1+i \lambda) \Phi_{-\lambda, \delta} (\cdot)\\\label{Intro- of kostant poly2}
\left[Q_\delta(1-i \lambda) \right]^{-1} \Phi^*_{\overline{\lambda},
\delta} (\cdot)
&=&  \left[Q_\delta(1+i \lambda) \right]^{-1} \Phi^*_{-\overline{\lambda}, \delta} (\cdot)
\end{eqnarray}
Both sides of the above relations are holomorphic for all $\lambda
\in \C$.
\end{enumerate}
\end{Remark}
The polynomials $Q_\delta$ are called the Kostant's polynomials. For a rank-one group the polynomial $Q_\delta(1+ i \lambda)$ has the following
representation interms of the Gamma functions \cite[Theorem 11.2, Ch. III, $\S$11]{Helgason-gas}
\begin{equation}
\label{Structure of the Kostant's polynomial}
Q_\delta(1+i\lambda)=
\left(\frac{1}{2}(\alpha +\beta+1+i \lambda) \right)_{\frac{r+s}{2}}
\left(\frac{1}{2}(\alpha - \beta +1 +i \lambda)
\right)_{\frac{r-s}{2}}
\end{equation}
where, $(z)_m = \frac{\Gamma(z+m)}{\Gamma(z)}$. Two group dependent
constants $\alpha, ~\beta$ are given by $\alpha =
\frac{1}{2}(m_\gamma +m_{2 \gamma} -1)$, $\beta =\frac{1}{2}(m_{2
\gamma} -1) $. The pair of integers $(r,s)$ is the parameterization
of the representation $\delta \in \widehat{K}_M$. Clearly
$Q_\delta(1+i \lambda)$ is a polynomial in $i \lambda$ of order
$2r$. Helgason \cite[Ch. III, $\S$ 11]{Helgason-gas}  further showed that
the polynomial $Q_\delta(1+i \lambda)$ has no zero in the interior of the
strip $\mathfrak a^*_\e:= \{\lambda \in \mf a^*_\C ~:~ |\Im \lambda| \leq
\e \} $.

We now define a function space in the Fourier domain which is a prospective candidate for the image of ${\mc S^{p}}^\delta(X)$ under the $\delta$-spherical transform.
\begin{Definition}
\label{def:ch2:delta SW space in the image side}
We denote $\mc S_\delta(\mathfrak a^*_\e)$ for the space of all $Hom(V_\delta, V_\delta)$ valued functions $\psi$ on the complex strip $\mathfrak a^*_\e$ with the properties:
\begin{enumerate}
  \item For each $\lambda \in \mf a^*_\e$,  $\psi(\lambda)$ maps $V_\delta$ to $V_\delta^M$.
    \item Each $\psi $ is holomorphic in the interior of the strip $\mf a^*_\e$ and  extends as a continuous function to the closed strip.
  \item $\psi$ satisfies the identity
      \begin{equation}
      \label{eq:ch2:W action on the image SW space}
      Q_\delta(1-i \lambda)^{-1} \psi(\lambda) = Q_\delta(1+i \lambda)^{-1} \psi(- \lambda),\hspace{.2in} \lambda \in \mf a^*_\e,
      \end{equation}
      where $ Q_\delta(1+i \lambda)$ is the polynomial (\ref{Structure of the Kostant's polynomial}).
      \item For each $P \in \mathbf{S}(\mf a)$ and for each integer $t \geq 0$ we have:
          \begin{equation}
          \label{eq:ch2:SW decay on the image side of delta sp. trans}
          \tau_{P, t}(\psi) = \sup_{\lambda \in Int \mf a^*_\e} \lt\|P\lt(\f{d}{d\lambda}\rt) \psi(\lambda)\rt\|_{\mathbf{2}} (1 + |\lambda|)^t < + \infty.
          \end{equation}
\end{enumerate}
\end{Definition}
 We have already mentioned that $dim V_\delta^M = 1$, so for each $\psi \in \mc S_\delta(\mathfrak a^*_\e)$ and $\lambda \in \mf a^*_\e$, with a convenient choice of basis,  $\psi(\lambda)$ is a $d_\delta \times d_\delta$ matrix with all the rows except the first one being identically zero.
It can be shown that the space $\mc S_\delta(\mf a^*_\e)$ is a Fr\'{e}chet space with the topology induced by the countable family of seminorms $\{ \tau_{P, t}\}$.
We shall be using the following topological characterization of the image of the Schwartz space ${\mc S^{p}}^\delta(X)$ under the $\delta$-spherical transform.
\begin{Theorem}
\label{theo:ch2:main}
 For $0 < p \leq 2$ and $\e = (2/p-1)$ the $\delta$-spherical transform $f \mapsto \widetilde{f}$ is a topological isomorphism between the spaces ${\mc S^{p}}^\delta(X)$ and $\mc S_\delta(\mathfrak a^*_\e)$.
\end{Theorem}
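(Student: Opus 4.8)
The plan is to reduce the statement to a continuous linear bijection between Fréchet spaces. Since ${\mc S^{p}}^\delta(X)$ and $\mc S_\delta(\mf a^*_\e)$ are both Fréchet (as recalled above), once the $\delta$-spherical transform $f^\delta\mapsto\widetilde{f^\delta}$ is shown to be a continuous linear bijection, the open mapping theorem makes it a topological isomorphism, so the inverse need not be estimated separately. I would work interchangeably in the operator-valued picture ${\mc S^{p}}^\delta(X)$ and the scalar left-$\check\delta$-type picture $\mc S^p(\check\delta,X)$ via Helgason's identification (Theorem \ref{theo:pri:left delta left delta check identification}) and its Schwartz-space extension (Remark \ref{Isomorphism between the scalar and op. valued SW spaces}); in either picture the transform is the $\check\delta$-isotypic component of the Helgason Fourier transform, since $\mc F(f^\delta)(\lambda,kM)=\delta(k)\widetilde{f^\delta}(\lambda)$. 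The tools this makes available are the inversion formula (\ref{HFT Inversion}), the continuity of $\mc F$ on $\mc S^p(X)$ (Eguchi), the growth estimate (\ref{estimate of  c funct.}) for $\textbf{c}(\lambda)^{-1}$, the estimates of Proposition \ref{pro:pri:properties of the elementary sp. funct.}, and the structural facts in Remark \ref{properties of the gen-sp functions}.

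\textbf{Necessity, continuity, injectivity.} For the forward direction I would differentiate under the integral in (\ref{def delta spherical}) and combine the analogue for $\Phi^*_{\overline\lambda,\delta}$ of estimate (\ref{eq:pri:estimate od sp.func. under the action of S a*}), namely $\|P(d/d\lambda)\Phi^*_{\overline\lambda,\delta}(x)\|_{\mathbf 2}\le c\,d_\delta^{1/2}(1+|x|)^{\deg P}\varphi_{i\Im\lambda}(x)$, with the Schwartz decay $|trf^\delta(x)|\le c_N(1+|x|)^{-N}\varphi_0^{2/p}(x)$ and the Cartan density $\Delta(t)=O(e^{2t})$. Using $\varphi_{i\Im\lambda}(a_t)\le e^{|\Im\lambda|t}\varphi_0(a_t)$ from Proposition \ref{pro:pri:properties of the elementary sp. funct.}(i),(vi) and the two-sided bound (\ref{eq:pri:apraori estimate of phi 0}), the integrand is dominated by $c'(1+t)^{\deg P+a-N}e^{(|\Im\lambda|-\e)t}$; this is integrable for $|\Im\lambda|<\e$ and, taking $N$ large, yields holomorphy of $\widetilde{f^\delta}$ on the open strip, a continuous extension to the closed strip, and the seminorm bounds (\ref{eq:ch2:SW decay on the image side of delta sp. trans}), and continuity of the transform is read off from these same estimates. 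That $\widetilde{f^\delta}(\lambda)$ maps $V_\delta$ into $V_\delta^M$ is Remark \ref{properties of the gen-sp functions}(i), and the identity (\ref{eq:ch2:W action on the image SW space}) follows from the Kostant-polynomial relation (\ref{Intro- of kostant poly2}) together with the $W$-invariance dictated by (\ref{HFT Inversion}). Injectivity is immediate: $\widetilde{f^\delta}\equiv0$ forces $\mc F(f^\delta)\equiv0$, hence $f^\delta=0$ by (\ref{HFT Inversion}).

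\textbf{Surjectivity.} Given $\psi\in\mc S_\delta(\mf a^*_\e)$ I would define the candidate preimage by the inversion formula,
\[
F(x)=\frac{1}{2}\int_{\mf a^*}\Phi_{\lambda,\delta}(x)\,\psi(\lambda)\,|\textbf{c}(\lambda)|^{-2}\,d\lambda,
\]
which converges absolutely since $\psi$ decays rapidly and $|\textbf{c}(\lambda)|^{-2}$ grows polynomially by (\ref{estimate of  c funct.}). Then $F$ is smooth, satisfies $F(kx)=\delta(k)F(x)$, and is $Hom(V_\delta,V_\delta^M)$-valued, so $trF$ is a scalar left-$\check\delta$-type function with $(trF)^\delta=F$; hence it suffices to establish the $\mc S^p$-seminorm bounds (\ref{seminorms on the operator valued SW space}) for $F$, after which $trF\in\mc S^p(\check\delta,X)$ and $F\in{\mc S^{p}}^\delta(X)$. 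For $x=a_t$ I would split $\Phi_{\lambda,\delta}$ into its two Harish-Chandra components, with leading exponentials $e^{(i\lambda-1)t}$ and $e^{(-i\lambda-1)t}$, and deform the $\lambda$-contour of the first integral up to $\Im\lambda=\e$ and of the second down to $\Im\lambda=-\e$. This is legitimate because $\psi$ is holomorphic on the open strip, $Q_\delta(1\pm i\lambda)^{-1}$ is holomorphic there (Helgason: $Q_\delta$ has no zero in the interior of $\mf a^*_\e$), and the accompanying $\textbf{c}$-factors are holomorphic across the respective half-strips; the functional equation (\ref{eq:ch2:W action on the image SW space}) guarantees that the two shifted contributions assemble consistently. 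On the shifted contours the leading exponential becomes $e^{-(1+\e)t}$, which is comparable, up to a power of $(1+t)$ by (\ref{eq:pri:apraori estimate of phi 0}), to $\varphi_0^{2/p}(a_t)$, while the extra factor $(1+|x|)^n$ is produced by integrating by parts $n$ times in $\lambda$, the boundary terms vanishing by the rapid decay of $\psi$ and the $\lambda$-derivatives of the Harish-Chandra coefficients and of the $\textbf{c}$-factors growing at most polynomially. Finally $\widetilde F=\psi$ follows by applying $\mc F$ to the displayed formula and using (\ref{HFT Inversion}); alternatively one may prove surjectivity first on the dense subspace of Paley-Wiener functions (Helgason's Paley-Wiener theorem for $\mc D(\check\delta,X)$) and then pass to the limit using completeness of ${\mc S^{p}}^\delta(X)$ and the estimate just described.

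\textbf{Main obstacle.} The crux is the contour deformation in the surjectivity step: one must control the zeros and poles of $\textbf{c}$, the behaviour of the Kostant factors $Q_\delta(1\pm i\lambda)$ near the boundary $|\Im\lambda|=\e$, and the uniformity in $x$ of every estimate, so that pushing the integral to the edge of the strip genuinely converts the rapid decay of $\psi$ in $\lambda$ and the exponential $e^{(i\lambda-1)t}$ into the bound $(1+t)^{-n}\varphi_0^{2/p}(a_t)$ required by the $L^p$-Schwartz condition. The remaining ingredients — absolute convergence, holomorphy, the functional equation, injectivity, and the bookkeeping of seminorms — are routine given Proposition \ref{pro:pri:properties of the elementary sp. funct.}, Remark \ref{properties of the gen-sp functions}, and the inversion formula (\ref{HFT Inversion}).
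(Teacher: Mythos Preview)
The paper does not actually prove Theorem~\ref{theo:ch2:main}. Immediately after the statement it writes: ``This is a part of the result proved by Eguchi and Kawata \cite{Eguchi76}. A proof of this theorem avoiding the complicated asymptotics of the generalized spherical functions can be found in \cite{Jana}.'' In other words, the theorem is quoted as a known isomorphism result from the literature and is used as a black box throughout Sections~\ref{sec: Necessary Conditions} and~\ref{sec: Sufficient Conditions}; there is no in-paper argument to compare your proposal against.

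Your outline is a reasonable sketch of the classical route (essentially the Eguchi--Kowata/Anker strategy): continuity of the forward transform via the growth bounds on $\Phi^*_{\overline\lambda,\delta}$ and the Schwartz decay, injectivity from the inversion formula, and surjectivity by writing the inverse integral, splitting $\Phi_{\lambda,\delta}$ via the Harish-Chandra expansion, and shifting contours to the edges $\Im\lambda=\pm\e$ to manufacture the factor $e^{-(1+\e)t}\sim\varphi_0^{2/p}(a_t)$. That is indeed the approach of \cite{Eguchi76}, and you correctly flag the genuine work: controlling the $\mathbf c$-function singularities, the Kostant factors, and uniformity in $x$ during the contour shift. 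By contrast, the alternative proof in \cite{Jana} that the paper also cites is designed precisely to \emph{avoid} the Harish-Chandra asymptotics of $\Phi_{\lambda,\delta}$, reducing instead to the scalar spherical isomorphism via the Jacobi-function relation (\ref{Jacobi function}) and the known $\mc S^p(K\backslash X)\cong\mc S(\mf a^*_\e)^W$ theorem. Either route is legitimate; your sketch follows the first, but be aware that turning it into a complete proof requires the full machinery of the $\delta$-type Harish-Chandra expansion (error terms, uniform remainder estimates), which is exactly the ``complicated asymptotics'' the paper's second citation circumvents.
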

This is a part of the result proved by Eguchi and Kawata \cite{Eguchi76}.
A proof of this theorem avoiding the complicated asymptotics of the generalized
 spherical functions can be found in \cite{Jana}.

\section{\textbf{Necessary Conditions}}
\label{sec: Necessary Conditions}

\setcounter{equation}{0}
In this section we start with the $L^p$-Schwartz space  $\mc S^p(X)$ with $0 < p \leq 2$. For $f \in \mc S^p(X)$ we
define $P_\lambda f(x)= (f \ast \varphi_\lambda)(x)$, for suitable $\lambda \in \C
$.
We get  an alternative expression for  the spectral projection $P_\lambda f(\cdot)$ in terms of
the Helgason Fourier transform $\mc F f$ of the function $f \in \mc S^p(X)$.
Beginning with $P_\lambda f(x)= \int_{G} f(y) \varphi_\lambda(y^{-1}x) dy$,
we use the standard symmetric property (\ref{eq:pri:symmetric property of the ele. sp. funct.})
of the elementary spherical functions and the Fubini's theorem to write:
  \begin{align}\label{P lambda}
P_\lambda f(x) &= \int_K \lt\{ \int_G f(y)  e^{(i \lambda - 1)H(y^{-1}k^{-1})} dy \rt\} e^{-(i \lambda +1)H(x^{-1}k^{-1})} dk \nonumber\\
&= \int_K \mc F f(\lambda, k^{-1}) e^{-(i \lambda +1)H(x^{-1}k^{-1})} dk \nonumber\\
&= \int_{K} \mc F f(\lambda, k) e^{-(i \lambda+ 1) H(x^{-1}k)} dk.
\end{align}
We have already mentioned  that for any $f \in \mc S^p(X)$ the Helgason
Fourier transform $\mc F f$  is defined on the domain $\mf a^*_\e \times K/M$.
 Hence, (\ref{P lambda}) implies that for
  each $f \in \mc S^p(X)$ the function  $(\lambda, x)\mapsto P_\lambda f(x)$
   is defined on $\mf a^*_\e \times X$.
We use the notation $\mf e_{\lambda, k}(x)$ for $e^{-(i \lambda+ 1) H(x^{-1}k)} $, which is the kernel of the integral in the definition (\ref{P lambda}). As we have already mentioned that the Iwasawa decomposition $KAN$ is diffeomorphic to $G$, so, for each $k \in K$, the Iwasawa-$\mf a$-projection $x \mapsto H(x^{-1}k)$ is a $\mc C^\infty $ map on $G$. Thus $\mf e_{\lambda, k} \in \mc C^\infty(X)$ for each $\lambda \in \C$ and $k \in K$. Hence, from (\ref{P lambda}) one can conclude that for each $\lambda \in \mf a^*_\e$ and for each $f \in \mc S^p(X)$, $P_\lambda f  \in \mc C^\infty(X)$. Furthermore the kernel $\mf e_{\lambda, k} $ is a joint eigenfunction of the algebra $\mathbf{D}(X)$. In particular for the Laplace-Beltrami operator $\textbf{L}$ the eigenvalue for $\mf e_{\lambda, k}$ we have:
\begin{equation}
\label{eq:kernel as eigen funct of laplacian}
\textbf{L} \mf e_{\lambda, k}(x) = -(1+ \lambda^2) \mf e_{\lambda, k}(x),
\end{equation}
for each $\lambda \in \C$ and $k \in K$ \cite{Gangolli88}. According to our notation $\mf e_{\lambda, k} \in \mc E_\lambda(X)$ for each $\lambda \in \C$. Note that the integral  in the definition (\ref{P lambda}) of $P_\lambda f$ is over a compact set. Therefore, it follows easily that for each $\lambda \in \mf a^*_\e$ and $f \in \mc S^p(X)$,   $P_\lambda f \in \mc E_\lambda(X)$.

To prove other characteristic properties of the function space $P_\lambda(\mc S^p(X))$ ($\lambda \in \mf a^*_\e$). We shall mainly use the continuity
 of the $\delta$-spherical transform \cite[Lemma 4.2]{Jana} which is a part of Theorem \ref{theo:ch2:main}.

For each $f \in \mc S^p(X)$ and for each $\lambda \in \mf a^*_\e$,  $P_\lambda f \in \mc C^\infty(X)$,
we define its matrix valued left $\delta$-projection $(P_\lambda f)^\delta$ by
\be
\label{eq:delta projection of P lambda}
(P_\lambda f)^\delta(x)= d_\delta \int_K P_\lambda f(k x) \delta(k^{-1}) dk.
\ee

It is clear that for each $\delta \in \what{K}_M$, $(P_\lambda f)^\delta \in \mathcal E_\lambda^\delta(X) $.
Now $(P_\lambda f)^\delta$ satisfies $(P_\lambda f)^\delta (k x) = \delta(k)( P_\lambda f)^\delta (x)$ ($ k \in K, x \in X $). Hence $tr(P_\lambda f)^\delta $ is a left $\check{\delta}$-type scalar valued  function and hence $tr (P_\lambda f)^\delta \in \mc E_\lambda(\check{\delta}, X)$.\\
The following proposition relates the projection $(P_\lambda f)^\delta$  with the generalized spherical function (\ref{eq:pri:gen. sp. funct.}). This structure will be very useful for estimating the decay of the function $P_\lambda f$ for each $f \in \mc S^p(X)$ ($0 < p \leq 2$).

\begin{Proposition}
\label{delta projection of P-lambda}
Let $f \in L^p(X)$ if $1 < p \leq 2$ and $f \in \mc S^p(X)$ if $0< p \leq 1$. Then  ~~~~$(P_\lambda f)^\delta(x)
= P_\lambda (f^\delta)(x) = \Phi_{\lambda, \delta}(x)~
\widetilde{f^\delta}(\lambda),\mbox{~for~}x \in X \mbox{~and~} ~\lambda \in Int \mf a^*_\e,$  where $\widetilde{f^\delta}$ is the
$\delta$-spherical transform of $f^\delta$ as defined in (\ref{eq:pri:matrix valud delta projection}).
\end{Proposition}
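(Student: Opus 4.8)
The plan is to establish the two claimed equalities in sequence. First I would prove that $(P_\lambda f)^\delta = P_\lambda(f^\delta)$, i.e.\ that the matrix-valued $\delta$-projection commutes with convolution on the right by the (bi-$K$-invariant) spherical function $\varphi_\lambda$. Writing out the definition \eqref{eq:delta projection of P lambda}, we have
\[
(P_\lambda f)^\delta(x) = d_\delta \int_K \left( \int_G f(y) \varphi_\lambda(y^{-1}kx)\, dy \right) \delta(k^{-1})\, dk,
\]
and the right-hand side $P_\lambda(f^\delta)(x) = \int_G f^\delta(z)\varphi_\lambda(z^{-1}x)\, dz = d_\delta \int_G \int_K f(ky)\delta(k^{-1})\varphi_\lambda(y^{-1}x)\, dk\, dy$. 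Using the left-invariance of the Haar measure on $G$ (substitute $y \mapsto k^{-1}y$ in one of the integrals) together with Fubini's theorem, both expressions reduce to the same double integral; the interchange of integrals is justified because the $K$-integral is over a compact set and, for the relevant range of $\lambda$, the integrand is integrable in $y$ — for $0<p\le 1$ this uses $f \in \mc S^p(X)$ and the estimate \eqref{eq:pri:estimate od sp.func. on the imaginary axis} together with \eqref{eq:pri:apraori estimate of phi 0}, while for $1<p\le 2$ one uses $f \in L^p(X)$ and Hölder against $\varphi_\lambda \in L^{p'}$-type bounds. This is essentially the classical fact that right convolution by a bi-$K$-invariant function commutes with the left $K$-type decomposition, applied at the level of matrix coefficients.

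Second I would identify $P_\lambda(f^\delta)(x)$ with $\Phi_{\lambda,\delta}(x)\,\widetilde{f^\delta}(\lambda)$. Here I would expand $P_\lambda(f^\delta)(x) = \int_G f^\delta(y)\varphi_\lambda(y^{-1}x)\, dy$ and substitute the symmetric integral representation \eqref{eq:pri:symmetric property of the ele. sp. funct.} for $\varphi_\lambda(y^{-1}x)$, exactly as was done to derive \eqref{P lambda}. After an application of Fubini this gives
\[
P_\lambda(f^\delta)(x) = \int_K \left( \int_G f^\delta(y)\, e^{(i\lambda-1)H(y^{-1}k^{-1})}\, dy \right) e^{-(i\lambda+1)H(x^{-1}k^{-1})}\, dk = \int_K \mc F(f^\delta)(\lambda,k^{-1})\, e^{-(i\lambda+1)H(x^{-1}k^{-1})}\, dk.
\]
Now one invokes the already-recorded fact that $\mc F(f^\delta)(\lambda, kM) = \delta(k)\,\widetilde{f^\delta}(\lambda)$, so $\mc F(f^\delta)(\lambda, k^{-1}) = \delta(k^{-1})\,\widetilde{f^\delta}(\lambda)$. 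Substituting and changing variables $k \mapsto k^{-1}$ in the $K$-integral, the factor $\widetilde{f^\delta}(\lambda)$ pulls out on the right, leaving
\[
P_\lambda(f^\delta)(x) = \left( \int_K e^{-(i\lambda+1)H(x^{-1}k)} \delta(k)\, dk \right) \widetilde{f^\delta}(\lambda) = \Phi_{\lambda,\delta}(x)\,\widetilde{f^\delta}(\lambda),
\]
by the definition \eqref{eq:pri:gen. sp. funct.} of the generalized spherical function.

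I expect the main obstacle to be the justification of the Fubini interchanges and of the convergence of the defining integrals across the full range $0<p\le 2$, since the domain of $\lambda$ is the closed strip $\mf a^*_\e$ with $\e = 2/p-1$ and the integrand $f^\delta(y)\varphi_\lambda(y^{-1}x)$ only barely decays at the boundary. The clean way to handle this is: first prove the identity for $\lambda \in \mathrm{Int}\,\mf a^*_\e$, where one has strict integrability from the Schwartz (or $L^p$) decay of $f$ combined with the growth estimate $\varphi_\lambda(a_t) \le e^{|\Im\lambda|\,t}\varphi_0(a_t)$ of \eqref{eq:pri:estimate od sp.func. on the imaginary axis} and the Cartan-decomposition Haar measure \eqref{eq:pri:Haar measure for KAK} with density $\Delta(t) = O(e^{2t})$; then, if needed on the closed strip, extend by continuity in $\lambda$, noting that both sides are continuous (indeed holomorphic in the interior) in $\lambda$ — the left side because $P_\lambda f$ is built from the kernel $\mf e_{\lambda,k}$ which is holomorphic in $\lambda$, and the right side because $\Phi_{\lambda,\delta}$ is holomorphic in $\lambda$ and $\widetilde{f^\delta} \in \mc S_\delta(\mf a^*_\e)$ by Theorem \ref{theo:ch2:main}. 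The statement as written restricts to $\lambda \in \mathrm{Int}\,\mf a^*_\e$, so the interior argument suffices and the continuity extension is not strictly required.
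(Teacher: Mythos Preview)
Your proposal is correct and follows essentially the same route as the paper: you first show $(P_\lambda f)^\delta = P_\lambda(f^\delta)$ by a change of variables $y\mapsto k^{-1}y$ and Fubini, then expand $\varphi_\lambda(y^{-1}x)$ via the symmetric formula \eqref{eq:pri:symmetric property of the ele. sp. funct.}, invoke $\mc F(f^\delta)(\lambda,k)=\delta(k)\,\widetilde{f^\delta}(\lambda)$, and recognize the remaining $K$-integral as $\Phi_{\lambda,\delta}(x)$. The paper does exactly this, with the same H\"older/$L^{p'}$ justification for the existence of $P_\lambda f$ when $1<p\le 2$; your treatment of the Fubini and convergence issues is, if anything, more explicit than the paper's.
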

\begin{proof}
The existence of $P_\lambda f$ needs a proof in case of $f \in L^p(X)$, $1 < p \leq 2$.
It is a consequence of the estimates (\ref{eq:pri:estimate od sp.func. under the action of U g}) and (\ref{eq:pri:estimate od sp.func. on the imaginary axis}) of the function $\varphi_\lambda$, that for $\lambda \in \mf a^*_\e$, $\varphi_\lambda \in L^q(X)$, where $\f{1}{p}+\f{1}{q}=1$. Further, it can be shown that for each compact set $U\subset Int \mf a^*_\e$, there exists a $g \in L^q(X)$ with $g \geq 0$ such that $|\varphi_\lambda(x)| < g(x)$, for $\lambda \in U$ and $x \in X$. Thus by H\"{o}lder's inequality $f \ast \varphi_\lambda(x)$ exists for all $\lambda \in U$. Moreover, the uniform domination of the $\varphi_\lambda$ means that $\lambda \mapsto \varphi_\lambda$ is a continuous map of $U$ to $L^q(X)$. H\"{o}lder's inequality will then make $f \ast \varphi_\lambda(x)$ continuous in $\lambda\in U$. The compact set $U$ being arbitrary we get the existence and continuity in both the variables on $Int \mf a^*_\e \times X$.
From (\ref{eq:delta projection of P lambda}) we now have:
\begin{eqnarray}
(P_\lambda f)^\delta(x) &=& d_\delta\int_K P_\lambda f(k x)~ \delta(k^{-1})
~dk \nonumber \\
&=& d_\delta \int_K \int_G f(y) \varphi_\lambda (y^{-1}kx) ~dy~
\delta(k^{-1})
~dk \nonumber \\
&=& d_\delta \int_K \int_G f(kz) \varphi_\lambda (z^{-1}x)~ dz
~\delta(k^{-1})~
dk \nonumber \\
&=& d_\delta\int_G \varphi_\lambda (z^{-1}x) \int_K f(kz) ~\delta(k^{-1})
~dk  ~dz \nonumber\\
\label{No-1} &=& \int_G f^\delta (z) \varphi_\lambda (z^{-1}x)~ dz\\
&= & P_\lambda (f^\delta)(x) \nonumber.
\end{eqnarray}
Using the symmetric property of the elementary spherical function (\ref{eq:pri:symmetric property of the ele. sp. funct.}) the expression (\ref{No-1}) can be written as
\begin{equation}
(P_\lambda f)^\delta(x)= \int_G f^\delta(z) \int_K e^{-(i \lambda + 1)H(x^{-1}k)} e^{(i
\lambda -1)H(z^{-1}k)} dk~ dz.
\end{equation}
The repeated integral on the right-hand side converging absolutely,  we interchange the integrals to obtain
\begin{align} \label{structural form}
(P_\lambda f)^\delta(x)&=
\int_K  e^{-(i \lambda + 1)H(x^{-1}k)} \int_G f^\delta(z) e^{(i
\lambda -1)H(z^{-1}k)} dz~ dk \nonumber\\
&=  \int_K e^{-(i
\lambda + 1)H(x^{-1}k)} \mc F f^\delta(\lambda, k) dk \nonumber.
\intertext{We have already noticed that $\mc F f^\delta(\lambda, k) = \delta(k) \mc F f^\delta(\lambda, e) $ and also we have observed that $ \mc F f^\delta (\lambda, e) = \widetilde{f^\delta}(\lambda)$. Then}
(P_\lambda f)^\delta(x ) &= \bigg\{\int_K e^{-(i \lambda + 1)H(x^{-1}k)} \delta(k) dk \bigg\}
\widetilde{f^\delta}(\lambda)
= \Phi_{\lambda, \delta} (x) \widetilde{f^\delta}(\lambda).
\end{align}
\end{proof}
\begin{Remark} \label{imp}
As $\varphi_\lambda(x)=\varphi_{-\lambda}(x)$ for all
$\lambda \in \C $, so both the functions
$P_\lambda f$, $(P_\lambda f)^\delta $ and $tr (P_\lambda f)^\delta$
are even  in
the $\lambda$ variable.
\end{Remark}
To characterize $P_\lambda f$ for $f \in \mc S^p(X)$, we shall first concentrate on each of its $\delta$-projections $(P_\lambda f)^\delta$. The following proposition summarizes the properties of $(P_\lambda f)^\delta$ which will be useful to characterize  $P_\lambda (\mc S^p(X))$ ($\lambda \in \mf a^*_\e$).
\begin{Proposition}
\label{proposition-for decay of delta proj of Pf}
For $f \in \mc S^p(X)$, where $0 < p \leq 2$ and for each fixed $\delta \in \widehat{K}_M$
the operator valued
left $\delta$-projection
$(P_\lambda f)^\delta$ of $P_\lambda f$ ($ \lambda \in \mf a^*_\e$)
has the  properties:
\begin{enumerate}
\item For each $\lambda \in \mf a^*_\e$, the function $(P_\lambda f)^\delta \in \mc E_\lambda^\delta(X)$.
\item For each $x \in X$, $\lambda \mapsto (P_\lambda f)^\delta(x)$ is an even holomorphic function in the interior of the strip $\mf a^*_\e$ and it extends as an even continuous function on the closed strip. The map $\lambda \mapsto Q_\delta(1 - i \lambda)^{-1} (P_\lambda f)^\delta(x) $ is a holomorphic function on the open strip $Int \mf a^*_\e$. For $p= 2$, $(P_\lambda f)^\delta(x)$ is a real analytic function of $\lambda \in \R$.
\item  For each $\textbf{D}, \textbf{E} \in \mathcal U(\mathfrak
g)$, $m, n, s \in \Z^+ \cup \{0\}$ and for any real number $ r_p <\frac{2p-2}{p}$
we can find positive constants $c_i$ and positive integers $l, t$ such that
\begin{align}
& \sup_{x \in G, \lambda \in Int \mathfrak a^*_\e}\bigg \|\left(\frac{d}{d\lambda}
\right)^m (P_\lambda f^\delta)(\textbf{D};
x; \textbf{E}) \bigg\|_{\mathbf{2}} (1+|x|)^n (1+ |\lambda|)^s \varphi_0^{-r_p}(x) \nonumber\\
& \hspace{1.3in}\leq \sum_{i=0}^{m} c_i (1+|\delta|)^q\sup_{x \in X}
\|\textbf{L}^l f^\delta(x)\|_{\textbf{2}} (1+|x|)^t \varphi_0^{-\frac{2}{p}} (x).
\end{align}
\end{enumerate}
\end{Proposition}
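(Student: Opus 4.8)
The plan is to exploit the factorization $(P_\lambda f)^\delta(x) = \Phi_{\lambda,\delta}(x)\,\widetilde{f^\delta}(\lambda)$ from Proposition \ref{delta projection of P-lambda} and to control the two factors separately: the $\delta$-spherical function $\Phi_{\lambda,\delta}$ will carry the $x$-decay (via the $\varphi_0$-estimates of Proposition \ref{pro:pri:properties of the elementary sp. funct.}) and the $\delta$-spherical transform $\widetilde{f^\delta}$ will carry the $\lambda$-decay (via the continuity of the $\delta$-spherical transform, Theorem \ref{theo:ch2:main}). Part (i) is immediate: each entry of $\mf e_{\lambda,k}$ lies in $\mc E_\lambda(X)$, so the compactly supported averaging (\ref{eq:delta projection of P lambda}) keeps $(P_\lambda f)^\delta$ in $\mc E_\lambda^\delta(X)$, as already noted before the statement. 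For part (ii), holomorphy in $\lambda$ on $Int\,\mf a^*_\e$ follows because $\Phi_{\lambda,\delta}(x)$ is entire in $\lambda$ (differentiate under the integral in (\ref{eq:pri:gen. sp. funct.})) and $\widetilde{f^\delta}(\lambda)\in\mc S_\delta(\mf a^*_\e)$ is holomorphic on the open strip and continuous up to the boundary by Theorem \ref{theo:ch2:main}; evenness is Remark \ref{imp}; the holomorphy of $\lambda\mapsto Q_\delta(1-i\lambda)^{-1}(P_\lambda f)^\delta(x)$ comes from combining the Kostant-polynomial identity (\ref{Intro- of kostant poly1}) for $\Phi_{\lambda,\delta}$ with the matching identity (\ref{eq:ch2:W action on the image SW space}) for $\widetilde{f^\delta}$, which together show the apparent poles of $Q_\delta(1-i\lambda)^{-1}$ are cancelled; and for $p=2$ real-analyticity on $\R$ is then clear since a holomorphic function on a neighborhood of $\R$ is real-analytic there.

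The substance is part (iii). First I would reduce the left-invariant differential operator $\mathbf D$ acting on $(P_\lambda f^\delta) = P_\lambda(f^\delta)$ to a derivative hitting $f^\delta$: since $P_\lambda$ is a convolution on the left by $\varphi_\lambda$ (equivalently $(P_\lambda f^\delta)(x) = \int_G f^\delta(z)\varphi_\lambda(z^{-1}x)\,dz$), the operator $\mathbf D$ on the left variable can be transferred onto $f^\delta$ at the cost of replacing $f^\delta$ by $f^\delta(\mathbf D';\cdot)$ for a suitable $\mathbf D'\in\mc U(\mf g)$; likewise $\mathbf E$ on the right of $x$ lands on $\varphi_\lambda$, i.e. on the kernel $\mf e_{\lambda,k}$, where (\ref{eq:pri:estimate od sp.func. under the action of U g}) gives the bound $c(|\lambda|+1)^{\deg\mathbf E}\varphi_{i\Im\lambda}$. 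The $\lambda$-derivatives $(d/d\lambda)^m$ fall on $\Phi_{\lambda,\delta}(x)$ and on $\widetilde{f^\delta}(\lambda)$ by the Leibniz rule, producing the sum $\sum_{i=0}^m c_i$; a $(d/d\lambda)^j$ on $\Phi_{\lambda,\delta}(\mathbf D';x;\mathbf E)$ is controlled, by differentiating (\ref{eq:pri:gen. sp. funct.}) under the integral sign and invoking (\ref{eq:pri:estimate od sp.func. under the action of S a*})–(\ref{eq:pri:estimate od sp.func. under the action of U g}), by a constant times $(1+|x|)^{j}(1+|\lambda|)^{\deg\mathbf D+\deg\mathbf E}\varphi_{i\Im\lambda}(x)\,\|\delta(k)\|$-type factors, the $d_\delta$ and $\|\delta\|_{\mathbf 2}$ contributions being absorbed into $(1+|\delta|)^q$ via (\ref{delta-estimate}). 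The remaining $(d/d\lambda)^{m-j}\widetilde{f^\delta}(\lambda)$ is bounded in Hilbert–Schmidt norm by the Schwartz seminorm $\tau_{P,t}(\widetilde{f^\delta})$ for $P(d/d\lambda)=(d/d\lambda)^{m-j}$ and any prescribed $t$, which by the continuity half of Theorem \ref{theo:ch2:main} (the explicit estimate in \cite[Lemma 4.2]{Jana}) is dominated by a finite sum of the seminorms $\mu_{\mathbf{D}'',\mathbf{E}'',t''}(f^\delta)$; finally each such seminorm of $f^\delta$ is, by ellipticity of $\textbf L$ and elliptic regularity (writing $\mathbf D''(\cdot)\mathbf E'' = \textbf L^l(\cdot)$ plus lower order, or more simply bounding $\mu_{\mathbf D'',\mathbf E'',t''}$ by $\sup_x\|\textbf L^l f^\delta(x)\|_{\mathbf 2}(1+|x|)^{t}\varphi_0^{-2/p}(x)$ for $l$ large), controlled by the single quantity on the right-hand side of the claimed inequality.

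The one genuine obstacle is matching the $\varphi_0$-weights: one must pass from the $\varphi_{i\Im\lambda}(x)$ that appears naturally (from (\ref{eq:pri:estimate od sp.func. under the action of U g}) applied to $\varphi_\lambda$ with $\lambda$ in the strip $|\Im\lambda|<\e=2/p-1$) to the target weight $\varphi_0^{r_p}(x)$ with $r_p<\frac{2p-2}{p}=-( \frac2p-2)$. Here I would use (\ref{eq:pri:estimate od sp.func. on the imaginary axis}), $0<\varphi_{-i\Im\lambda}(a_t)\le e^{|\Im\lambda|t}\varphi_0(a_t)$, together with the two-sided estimate (\ref{eq:pri:apraori estimate of phi 0}), $e^{-t}\le\varphi_0(a_t)\le c(1+t)^a e^{-t}$, to write $\varphi_{i\Im\lambda}(x)\le e^{|\Im\lambda|\,|x|}\varphi_0(x)\le c(1+|x|)^a\,\varphi_0(x)^{1-|\Im\lambda|}$, and since $|\Im\lambda|<\e=\frac2p-1$ strictly we get $1-|\Im\lambda|>2-\frac2p=-r_p$ for the worst admissible $r_p$; the resulting gap between exponents, $\bigl(1-|\Im\lambda|\bigr)-(-r_p)>0$, converts a power of $\varphi_0^{-1}(x)\sim(1+|x|)^{a'}e^{|x|}$ growth into a bounded factor only after being compensated — so one actually keeps a factor $\varphi_0^{\,1-|\Im\lambda|+r_p}(x)$ which, being a bounded positive power of a function $\le 1$, is $\le 1$; any surplus polynomial in $|x|$ is swallowed by increasing $t$ on the right. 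Taking the supremum over $\lambda$ in the open strip is legitimate precisely because the strip is open, so $|\Im\lambda|$ stays bounded away from $\e$ only after we have already extracted the $\varphi_0$-power, not before — and that is exactly why the statement restricts to $r_p<\frac{2p-2}{p}$ rather than allowing equality. Assembling these estimates and collecting the finitely many seminorms of $f^\delta$ into the stated single majorant via elliptic regularity completes the argument.
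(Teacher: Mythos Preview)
Your overall architecture matches the paper's: use the factorization $(P_\lambda f)^\delta=\Phi_{\lambda,\delta}\,\widetilde{f^\delta}$, apply Leibniz in $\lambda$, estimate the $\Phi_{\lambda,\delta}$ factor pointwise and the $\widetilde{f^\delta}$ factor via the continuity of the $\delta$-spherical transform, then balance the $\varphi_0$-weights using the two-sided estimate (\ref{eq:pri:apraori estimate of phi 0}). Parts (i)--(ii) and the weight-matching in (iii) are essentially as in the paper (the paper simply replaces $|\Im\lambda|$ by $\e$ at the outset and extracts $e^{-\gamma|x|}$ with $\gamma=2-\tfrac2p-r_p>0$, which is the same computation you describe in different order).

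Two points, however, need correction. First, the ``transfer $\mathbf D$ onto $f^\delta$'' step is both unnecessary and muddled: once you have the factorization $(P_\lambda f)^\delta(x)=\Phi_{\lambda,\delta}(x)\,\widetilde{f^\delta}(\lambda)$, all $x$-dependence sits in $\Phi_{\lambda,\delta}$, so $(P_\lambda f)^\delta(\mathbf D;x;\mathbf E)=\Phi_{\lambda,\delta}(\mathbf D;x;\mathbf E)\,\widetilde{f^\delta}(\lambda)$ with no transfer needed; that is exactly what the paper writes. Second --- and this is the actual gap --- you propose to bound $\bigl\|(d/d\lambda)^\ell\Phi_{\lambda,\delta}(\mathbf D;x;\mathbf E)\bigr\|_{\mathbf 2}$ by differentiating under the integral in (\ref{eq:pri:gen. sp. funct.}) and invoking (\ref{eq:pri:estimate od sp.func. under the action of U g})--(\ref{eq:pri:estimate od sp.func. under the action of S a*}). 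Those estimates are for $\varphi_\lambda$, not for the pointwise integrand $\mf e_{\lambda,k}(x)$; they give you nothing about $\int_K|\partial^\alpha_x\mf e_{\lambda,k}(x)|\,dk$, and they carry no $\delta$-dependence. The paper instead invokes Harish-Chandra's estimate for matrix coefficients of the principal series \cite[\S17, Lemma~1]{Harish76},
\[
\Bigl\|\Bigl(\tfrac{d}{d\lambda}\Bigr)^{\ell}\Phi_{\lambda,\delta}(\mathbf D;x;\mathbf E)\Bigr\|_{\mathbf 2}\le c\,(1+|\delta|)^{q}(1+|\lambda|)^{q}(1+|x|)^{u}\,\varphi_0(x)\,e^{|\Im\lambda|\,|x|},
\]
which is the single tool that delivers the correct joint $x$-, $\lambda$-, and $\delta$-dependence in one stroke. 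With this in hand the rest is exactly your outline: multiply by the weight, use $\varphi_0^{1-r_p}(a_t)\le c(1+t)^{b_p}e^{(r_p-1)t}$, and bound $\tau_{s+q,m-\ell}(\widetilde{f^\delta})$ by $\sup_x\|\mathbf L^{l}f^\delta(x)\|_{\mathbf 2}(1+|x|)^{t}\varphi_0^{-2/p}(x)$ directly from the continuity of the $\delta$-spherical transform --- no separate elliptic-regularity step is required.
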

\begin{proof}
Property (i) has already been discussed.  The property (ii) is a consequence of the expression (\ref{structural form}) and from the fact that the function $\lambda \mapsto \widetilde{f^\delta}(\lambda)$ satisfies property (ii) and (iii) of Definition \ref{def:ch2:delta SW space in the image side}.\\
(iii) ~~ Using the result of the Proposition \ref{delta projection of
P-lambda} we get
\begin{align}\label{breaking of P-lambda-delta as product}
& \lt\|\left(\frac{d}{d \lambda} \right)^m (P_\lambda f)^\delta(\textbf{D};
x; \textbf{E})\rt\|_{\textbf{2}} = \bigg\|\left(\frac{d}{d \lambda} \right)^m\{
\Phi_{\lambda, \delta}(\textbf{D}; x; \textbf{E})
\widetilde{f^\delta}(\lambda)\}\bigg\|_{\textbf{2}} \nonumber\\
&\hspace{1.5in}\leq \sum_{\ell=0}^t\bigg\|\left(\frac{d}{d \lambda} \right)^\ell
\Phi_{\lambda, \delta}(\textbf{D}; x; \textbf{E})\bigg\|_{\textbf{2}}~~\bigg\|\left(\frac{d}{d
\lambda} \right)^{m-\ell} \widetilde{f^\delta}(\lambda)\bigg\|_{\textbf{2}}
\end{align}
We shall use the following estimates for the various derivatives of the
matrix coefficients of the principal series representation  \cite[ $\S$17, Lemma 1]{Harish76}:
\begin{equation}
\label{bound of matrix coefficient of the principle series}
\bigg\|\left(\frac{d}{d \lambda} \right)^\ell \Phi_{\lambda, \delta}(\textbf{D};
x;\textbf{E})\bigg\|_{\textbf{2}} \leq c (1+|\delta|)^q (1+ |\lambda|)^q (1+ |x|)^u
\varphi_0(x) e^{|\Im{\lambda}| |x|}
\end{equation}
where $c >0$ is a constant (may depend on the derivatives chosen
but independent of $\delta \in \widehat{K}_M$ ), $q \in \mathbb Z^+$
depends on $\textbf{D}, \textbf{E} \in \mathcal U(\mathfrak g)$ and $u \in \mathbb
Z^+$ depends on the integer $\ell$. As $\lambda \in \mathfrak a^*_\e$ we can
replace $|\Im{\lambda}|$ by $\e= (\f{2}{p}-1)$ in (\ref{bound of matrix
coefficient of the principle series}). Now from (\ref{breaking of
P-lambda-delta as product}) and (\ref{bound of matrix coefficient of
the principle series}) we get:
\begin{align}\label{No. 2}
\bigg\|&\left(\frac{d}{d \lambda} \right)^m(P_\lambda
f)^\delta(\textbf{D}; x; \textbf{E})\bigg\|_{\textbf{2}}(1+ |x|)^n (1+ |\lambda|)^s
\varphi_0^{-r_p}(x)
\nonumber\\
&\leq \sum_{\ell=0}^m c_\ell (1+ |x|)^{n+u} (1+ |\lambda|)^{s+q}
(1+|\delta|)^q \varphi_0^{1-r_p}(x)e^{\e |x|}
\bigg\|\left(\frac{d}{d \lambda} \right)^{m-\ell}
\widetilde{f^\delta}(\lambda)\bigg\|_{\textbf{2}}. \nonumber\\
\intertext{ We notice that for all $0 < p \leq 2$, $1-r_p >0$. Now we make use of the estimate
$\varphi_0^{1 -r_p}(x) \leq
a(1+|x|)^{b_p}~ e^{(r_p-1)|x|},~~  x \in X$ (\emph{where  $b_p$ is a positive real number, to be precise, it is exactly $1 -r_p$} ). This is an easy consequence of the  two-sided estimate (\ref{eq:pri:apraori estimate of phi 0}) of the elementary spherical function $\varphi_0(x)$. Hence we can continue the above chain of inequalities by}
& \leq \sum_{\ell=0}^m c_\ell (1+ |x|)^{n+u+b_p} (1+ |\lambda|)^{s+q}
(1+|\delta|)^{q} e^{-\gamma |x|} \bigg\|\left(\frac{d}{d \lambda}
\right)^{m-\ell} \widetilde{f^\delta}(\lambda)\bigg\|_{\textbf{2}},
\end{align}
where  $\gamma = 2-\frac{2}{p}-r_p$. Clearly, $\gamma > 0$ as $ r_p <
\frac{2p-2}{p}$. Hence from (\ref{No. 2})
\begin{align}\label{NO-3}
\sup_{x\in G, \lambda \in Int\mathfrak a^*_\e, }&
\bigg\|\left(\frac{d}{d \lambda} \right)^m(P_\lambda f)^\delta(\textbf{D}; x;
\textbf{E})\bigg\|_{\textbf{2}}(1+ |x|)^{n} (1+
|\lambda|)^{s}  \varphi_0^{-r_p}(x) \nonumber\\
& \leq \sum_{\ell=0}^t c_\ell (1+|\delta|)^{q}\left\{\sup_{x \in X} (1+
|x|)^{n+u+b_p}
e^{-\gamma |x|}\right\} \nonumber\\
& \hspace{1.4in} \left\{\sup_{\lambda \in Int\mathfrak a^*_\e}(1+
|\lambda|)^{s+q}
\bigg\|\left(\frac{d}{d \lambda}
\right)^{m-\ell}
\widetilde{f^\delta}(\lambda)\bigg\|_{\textbf{2}}\right\} \nonumber \\
& \leq \sum_{\ell=0}^m \overline{c_{\ell}}~(1+|\delta|)^{q} \left\{\sup_{\lambda \in
Int\mathfrak a^*_\e}(1+ |\lambda|)^{s+q} \bigg\|\left(\frac{d}{d
\lambda}
\right)^{m-\ell} \widetilde{f^\delta}(\lambda) \bigg\|_{\textbf{2}}\right\}.
\intertext{ Now the expression within braces is the norm $\tau_{s+q, m-\ell}(\widetilde{f^\delta})$. Using
the continuity of the $\delta$-spherical transform, we write:~ there exists
 positive integers
$l,~ t$ such that the last expression (\ref{NO-3}) is dominated by   }
&c  (1+|\delta|)^{q} \sup_{x \in G} \|\textbf{L}^l
f^\delta(x) \|_{\textbf{2}} (1+|x|)^{t} \varphi_0^{-\frac{2}{p}}(x).
\end{align}
\end{proof}
\begin{Remark}
\label{rem:ch2:zeros of P lambda f x}
The fact that  $\lambda \mapsto Q_\delta(1 -i \lambda)^{-1} (P_\lambda f)^\delta$ (for all $f \in \mc S^p(X)$) is holomorphic on $Int \mf a^*_\e$  can be given a separate proof by using the structural form (\ref{structural form}) of $(P_\lambda f)^\delta(\cdot)$. It can be shown that (in fact we shall discuss about this in detail in the next section), for each $x = k a_t K$, $\Phi_{\lambda, \delta}(k a_t K)= \delta(k) Q_\delta(1 + i\lambda) \Phi(\lambda, t)$, where $\Phi(\lambda, t)$ is a scalar valued function on $\C \times \overline{\R^+}$ such that for each value of $\lambda$ it is a nonzero function in the $t$ variable. Hence, by (\ref{structural form}),  $Q_\delta(1 + i \lambda)^{-1}~(P_\lambda f)^\delta$ is holomorphic on $Int \mf a^*_\e$. Now $\lambda \mapsto (P_\lambda f)^\delta$ being an even function, it is  easy to notice that actually, $\lt[Q_\delta(1 -i\lambda) Q_\delta(1+ i \lambda) \rt]^{-1} (P_\lambda f)^\delta$ is holomorphic on $Int \mf a^*_\e$.
\end{Remark}
The above proposition helps us to get the decay/growth  of $P_\lambda f$
when $f \in \mc S^p(X)$ and  $\lambda \in \mathfrak a^*_\e$. The following  is the main theorem of this section.
\begin{Theorem}
\label{ theo: main theorem spectral proj}
For $f \in \mc S^p(X)$ ($0 < p \leq 2$),  the complex valued function  $P_\lambda f$ defined on $\mf a^*_\e \times X$  has the properties:
\begin{enumerate}
\item For each $\lambda \in \mathfrak a^*_\e$, ~~$P_\lambda f \in \mathcal E_\lambda(X)$;
\item For each $x \in X$ the function $\lambda \mapsto P_\lambda
f(x)$ is an even holomorphic function on $Int \mf a^*_\e$ and it extends as an even continuous function to the closed strip $\mf a^*_\e$. For each $\delta \in \what{K}_M$, the $\delta$-projection $(P_\lambda f)^\delta$ is an identically zero function on $X$ at all the zeros of the Kostant polynomial $Q_\delta(1 - i\lambda)$ lying in $Int \mf a^*_\e$;
\item For each $\textbf{D}, \textbf{E} \in \mathcal U(\mathfrak g)$, $m, n, s \in \Z^+ \cup
\{0\}$ and for all real number  $ r_p < \frac{2p-2}{p}$, one can find integers $\ell, t \in \Z^+$ and a positive constant $c$ depending on $m, n, s$ and $r_p$ such that:
\begin{align}
&\sup_{x \in G, \lambda \in Int\mathfrak a^*_\e }
\bigg|\left(\frac{d}{d \lambda} \right)^m P_\lambda f(\textbf{D}; x; \textbf{E})
\bigg| (1+ |x|)^n (1+ |\lambda|)^s \varphi_0^{-r_p}(x) \nonumber\\
& \hspace{1.6in}\leq c \sup_{x \in G} |\textbf{L}^\ell f(x)| (1 + |x|)^t \varphi_0^{-\f{2}{p}}(x) < +\infty.
\end{align}
\end{enumerate}
\end{Theorem}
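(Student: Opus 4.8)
\emph{Proof plan.} The strategy is to deduce all three assertions from the per-$\delta$ statements of Proposition~\ref{proposition-for decay of delta proj of Pf} by summing over $\widehat{K}_M$ through the Peter--Weyl expansion $P_\lambda f(x)=\sum_{\delta\in\widehat{K}_M}tr\big((P_\lambda f)^\delta(x)\big)$, which is (\ref{eq:pri:Peter-Weyl decomposition}) applied to the smooth function $P_\lambda f$. Property~(i) has already been recorded in the text. For (ii) and (iii) the content is to make this series converge in the relevant weighted-supremum (respectively locally uniform) sense and to propagate the estimate; the single genuine difficulty will be to absorb the factor $(1+|\delta|)^q$ that sits on the right of Proposition~\ref{proposition-for decay of delta proj of Pf}(iii). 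Note that the operators $\textbf{D},\textbf{E}$ on the left of (iii) are already accounted for inside that Proposition, whose right-hand side involves only $\textbf{L}^l f^\delta$.

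For (iii) I would first use $|tr(A)|\le d_\delta^{1/2}\|A\|_{\mathbf 2}$ together with the Weyl bound (\ref{delta-estimate}) to replace $d_\delta^{1/2}$ by $c(1+|\delta|)^r$, and apply Proposition~\ref{proposition-for decay of delta proj of Pf}(iii) term by term, reducing matters to showing that $\sum_{\delta}(1+|\delta|)^{q'}\sup_{x}\|\textbf{L}^l f^\delta(x)\|_{\mathbf 2}(1+|x|)^t\varphi_0^{-\frac{2}{p}}(x)$ is finite and controlled by a seminorm of $f$ of the shape required in (iii). The tool here is the Casimir $\om_{\mf k}$ of $\mf k$ acting on $\mc S^p(X)$ by differentiation from the left, $\om_{\mf k,L}f(x)=f(\om_{\mf k};x)$: it maps $\mc S^p(X)$ continuously into itself, since $(\om_{\mf k,L}f)(\textbf{D};x;\textbf{E})=f(\om_{\mf k}\textbf{D};x;\textbf{E})$ with $\om_{\mf k}\textbf{D}\in\mc U(\mf g)$, and it commutes with $\textbf{L}$ (left and right differentiations commute). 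Because the Casimir acts on $V_\delta$ by the scalar $-\langle\delta\rangle$ with $\langle\delta\rangle\ge 0$, the $\delta$-component of $(I-\om_{\mf k,L})f$ equals $(1+\langle\delta\rangle)\,f^\delta$, so $f^\delta=(1+\langle\delta\rangle)^{-N}\big((I-\om_{\mf k,L})^N f\big)^\delta$ for every $N$. Feeding this into the series, together with the crude bound $\|g^\delta(x)\|_{\mathbf 2}\le d_\delta^{3/2}\sup_{k\in K}|g(kx)|$ and the identity $\sup_{x}\sup_{k\in K}|g(kx)|(1+|x|)^t\varphi_0^{-\frac{2}{p}}(x)=\mu_{1,1,t}(g)$ (which holds because $|kx|=|x|$ and $\varphi_0(kx)=\varphi_0(x)$), one obtains a bound $c\,\mu_{1,1,t}\big((I-\om_{\mf k,L})^N\textbf{L}^l f\big)\cdot\sum_{\delta\in\widehat{K}_M}(1+|\delta|)^{q''}(1+\langle\delta\rangle)^{-N}$; since $1+\langle\delta\rangle$ grows polynomially in $|\delta|$ while $\#\{\delta\in\widehat{K}_M:|\delta|\le R\}$ is polynomially bounded in $R$, the last sum is finite once $N$ is taken large enough.

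To complete (iii) I would then turn that single seminorm into the required one: by continuity of $\om_{\mf k,L}$ and $\textbf{L}$ on $\mc S^p(X)$ it is $\le C\,\mu_{\textbf{D}',\textbf{E}',n'}(\textbf{L}^l f)$ for some fixed $\textbf{D}',\textbf{E}',n'$, and since $\textbf{L}$ is elliptic on the (bounded-geometry) manifold $X$, the seminorms $g\mapsto\sup_x|\textbf{L}^j g(x)|(1+|x|)^n\varphi_0^{-\frac{2}{p}}(x)$ already generate the topology of $\mc S^p(X)$; hence $\mu_{\textbf{D}',\textbf{E}',n'}(\textbf{L}^l f)\le C\sum_{j\le J}\sup_x|\textbf{L}^{j+l}f(x)|(1+|x|)^{n'}\varphi_0^{-\frac{2}{p}}(x)$, which after putting $\ell=J+l$ and absorbing the finite sum is exactly the right side of (iii) (finite because $f\in\mc S^p(X)$). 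The same estimate, read with $\textbf{D}=\textbf{E}=1$, $r_p=0$, $n=s=0$ and arbitrary $m$, shows that $\sum_\delta(d/d\lambda)^m tr\big((P_\lambda f)^\delta(x)\big)$ converges uniformly on $Int\,\mf a^*_\e\times X$ for every $m$; each summand being even in $\lambda$ (Remark~\ref{imp}), holomorphic on $Int\,\mf a^*_\e$ and continuous up to the closed strip (Proposition~\ref{proposition-for decay of delta proj of Pf}(ii)), so is the limit $\lambda\mapsto P_\lambda f(x)$, which gives the first half of (ii). For the vanishing statement: by Proposition~\ref{proposition-for decay of delta proj of Pf}(ii), for each fixed $x$ both $\lambda\mapsto(P_\lambda f)^\delta(x)$ and $\lambda\mapsto Q_\delta(1-i\lambda)^{-1}(P_\lambda f)^\delta(x)$ are holomorphic on $Int\,\mf a^*_\e$, which forces $(P_\lambda f)^\delta(x)$ to vanish at every zero of $Q_\delta(1-i\lambda)$ inside the strip; as $x$ is arbitrary, $(P_\lambda f)^\delta$ vanishes identically there.

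The step I expect to be the main obstacle is the passage from the $\delta$-by-$\delta$ estimates to the summed one. It rests on two points: (a) the identity $(\om_{\mf k,L}g)^\delta=-\langle\delta\rangle g^\delta$, together with the polynomial growth of $\langle\delta\rangle$ in $|\delta|$ and of the counting function of $\widehat{K}_M$, which is what makes the $\delta$-series converge for $N$ large; and (b) the elliptic-regularity/bounded-geometry fact that an arbitrary Schwartz seminorm of $(I-\om_{\mf k,L})^N\textbf{L}^l f$ can be dominated by one built only from powers of $\textbf{L}$ applied to $f$ --- which is precisely the form demanded on the right of (iii). Everything else amounts to bookkeeping with Propositions~\ref{delta projection of P-lambda} and~\ref{proposition-for decay of delta proj of Pf}.
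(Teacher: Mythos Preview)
Your proposal is correct and follows the same skeleton as the paper --- reduce everything to Proposition~\ref{proposition-for decay of delta proj of Pf} via the Peter--Weyl expansion (\ref{eq:pri:Peter-Weyl decomposition}) --- but the mechanism you use to absorb the factor $(1+|\delta|)^q$ and make the $\delta$-series converge is genuinely different from the paper's.

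The paper does not stay on the group side. Starting from the intermediate bound~(\ref{NO-3}) (which is in terms of the $\delta$-spherical transform $\widetilde{f^\delta}$ rather than $\textbf{L}^l f^\delta$), it multiplies and divides by $(1+|\delta|)^2$, observes that $\sup_{\lambda,\delta}(1+|\lambda|)^{s+q}(1+|\delta|)^{q+2}\|(\tfrac{d}{d\lambda})^{m-j}\widetilde{f^\delta}(\lambda)\|_{\mathbf 2}$ is, by Sugiura's equivalence \cite{Sugiura71}, a seminorm on the image space $\mc S(\mf a^*_\e\times K/M)$, and then invokes the continuity of the Helgason--Fourier transform $\mc S^p(X)\to\mc S(\mf a^*_\e\times K/M)$ from \cite{Eguchi76} to pull the estimate back to a seminorm of $f$ already of the form $\sup_x|\textbf{L}^\ell f(x)|(1+|x|)^t\varphi_0^{-2/p}(x)$. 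Your route instead stays on $X$: you use the left action of $\om_{\mf k}$ to manufacture the decay in $|\delta|$ via $f^\delta=(1+\langle\delta\rangle)^{-N}((I-\om_{\mf k,L})^Nf)^\delta$, then appeal to ellipticity of $\textbf{L}$ on the bounded-geometry space $X$ to convert the resulting Schwartz seminorm of $(I-\om_{\mf k,L})^N\textbf{L}^l f$ into one built only from powers of $\textbf{L}$. Both arguments are sound; the paper's buys brevity by outsourcing the work to the ready-made Eguchi--Kowata and Sugiura results, while yours is more self-contained but needs the (standard) elliptic-regularity fact that the seminorms $g\mapsto\sup_x|\textbf{L}^j g(x)|(1+|x|)^n\varphi_0^{-2/p}(x)$ generate the topology of $\mc S^p(X)$. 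Your treatment of~(ii), including the explicit remark that holomorphy of both $(P_\lambda f)^\delta$ and $Q_\delta(1-i\lambda)^{-1}(P_\lambda f)^\delta$ forces the vanishing at the Kostant zeros, matches the paper's (which leaves that implication implicit).
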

\begin{proof}
The property (i) has already been discussed. Condition
(ii) is an easy consequence of the  Peter-Weyl decomposition
\be
\label{eq:Peter-Weyl decomposition of P lambda}
P_\lambda f(x) = \sum_{\delta \in \what{K}_M} tr (P_\lambda f)^\delta(x),\hspace{.2in} \mbox{for all~} x \in X
\ee
 and Proposition \ref{proposition-for decay of delta proj of Pf}. To obtain
(iii) we use (\ref{eq:Peter-Weyl decomposition of P lambda}) to get:
\begin{align}\label{No.4}
\bigg|\left(\frac{d}{d \lambda} \right)^m P_\lambda f(\textbf{D}; x; &\textbf{E})
\bigg| (1+ |x|)^n (1+ |\lambda|)^s \varphi_0^{-r_p}(x) \nonumber\\
& \leq~ \sum_{\delta \in \widehat{K}_M} \bigg|\left(\frac{d}{d
\lambda} \right)^m tr(P_\lambda f)^\delta(\textbf{D}; x; \textbf{E}) \bigg| (1+ |x|)^n
(1+ |\lambda|)^s \varphi_0^{-r_p}(x) \nonumber\\
&=~ \sum_{\delta \in \widehat{K}_M} \bigg| tr\bigg[ \hspace{-.03in}\left(\frac{d}{d
\lambda} \right)^m\hspace{-.06in}(P_\lambda f)^\delta\hspace{-.03in}\bigg](\textbf{D}; x; \textbf{E}) \bigg| (1+
|x|)^n (1+ |\lambda|)^s \varphi_0^{-r_p}(x) \nonumber\\
& \leq~ \sum_{\delta \in \widehat{K}_M}\bigg\|\left(\frac{d}{d
\lambda} \right)^m\hspace{-.05in}(P_\lambda f)^\delta(\textbf{D}; x; \textbf{E}) \bigg\|_{\textbf{2}}(1+
|x|)^n (1+ |\lambda|)^s \varphi_0^{-r_p}(x).\\
&\mbox{The next  inequality follows easily from (\ref{No.4}) by applying (\ref{NO-3}).}\nonumber
\end{align}
\begin{align}\label{No.5}
\sup_{x \in G; \lambda \in Int \mathfrak a^*_\e}&
\bigg|\left(\frac{d}{d \lambda} \right)^m P_\lambda f(\textbf{D}; x; \textbf{E})
\bigg| (1+ |x|)^n (1+ |\lambda|)^s \varphi_0^{-r_p}(x) \nonumber\\
&\leq~ c~\sum_{\delta \in \what{K}_M} \sum_{j=0}^m \sup_{\lambda \in Int\mathfrak a^*_\e} \lt\{ (1+ |\lambda|)^{s+q} (1+|\delta|)^{q}
\bigg\|\left(\frac{d}{d \lambda} \right)^{m-j}
\widetilde{f^\delta}(\lambda)\bigg\|_{\textbf{2}}\rt\} \nonumber\\
&\leq \sum_{j=1}^m \sum_{\delta \in \what{K}_M} \hspace{-.05in}(1 + |\delta|)^{-2} \lt\{ \sup_{\lambda \in Int\mathfrak a^*_\e}  (1+ |\lambda|)^{s+q} (1+|\delta|)^{q+2}
\bigg\|\left(\frac{d}{d \lambda} \right)^{m-j}
\widetilde{f^\delta}(\lambda)\bigg\|_{\textbf{2}}\rt\}.
\end{align}
As $f \in \mc S^p(X)$, so its HFT  $\mc F f \in \mc S(\mf a^*_\e \times K/M)$ \cite{Eguchi76}.
  We have also noticed that the  Fr\'{e}chet topology on $\mc S(\mf a^*_\e \times K/M)$ is induced by the countable family \ref{SW semi norm for image of HFT} of seminorms.
     By the theory of smooth functions on the compact group \cite[Theorem 4]{Sugiura71}, it follows that the topology of $\mc S(\mf a^*_\e \times K/M)$ can also be obtained from the equivalent family of seminorms, given by
    $$\sup_{\lambda \in Int \mf a^*_\e; \delta \in \what{K}_M} \lt\|P\lt(\f{d}{d \lambda}\rt) \widetilde{f^\delta}(\lambda)\rt\|_{\textbf{2}} (1 + |\lambda|)^{\texttt{n}} (1+ |\delta|)^{\texttt{m}} < + \infty.$$
     Hence, we can state that the expression within braces of  each of the summands of (\ref{No.5})   is dominated by the single finite quantity :
\begin{equation}
\label{eq:no-1}
\sup_{\lambda \in Int\mf a ^*_\e; \delta \in \what{K}_M} \lt\|\lt(\f{d}{d \lambda} \rt)^{m-j} \widetilde{f^\delta}(\lambda)  \rt\|_{\textbf{2}} (1 + |\lambda|)^{s +q} (1 + |\delta|)^{q+2}.
\end{equation}
This coupled with the summability of $\sum (1+ |\delta|)^{-2}$ reduces the inequality (\ref{No.5})  to:
\begin{align}
\label{ali:No-6}
&\sup_{x \in X, \lambda \in Int \mathfrak a^*_\e}
\bigg|\left(\frac{d}{d \lambda} \right)^m P_\lambda f(\textbf{D}; x; \textbf{E})
\bigg| (1+ |x|)^n (1+ |\lambda|)^s \varphi_0^{-r_p}(x) \nonumber\\
& \hspace{.5in}\leq c \sum_{j=0}^m \lt\{ \sup_{\lambda \in Int\mf a ^*_\e; \delta \in \what{K}_M} \lt\|\lt(\f{d}{d \lambda} \rt)^{m-j} \widetilde{f^\delta}(\lambda)  \rt\|_{\textbf{2}} (1 + |\lambda|)^{s +q} (1 + |\delta|)^{q+2}\rt\}. \nonumber
\intertext{Again by the equivalence of the seminorms on $\mc S(\mf a^*_\e \times K/M)$, we can find positive integers $m_1, m_2, m_3$ such that the last expression is}
&\hspace{.5in}\leq c\sup_{\lambda \in Int \mf a^*_\e; k \in K/M} \lt|\lt(\f{d}{d \lambda}\rt)^{m_1} \mc F f(\lambda, k ; \omega_{\mf k}^{m_2}) \rt| (1 + |\lambda|)^{m_3}, \nonumber
\intertext{and  by the continuity of the HFT on the Schwartz space $\mc S^p(X)$\cite{Eguchi76}, we get nonnegative integers $\ell, t \in \Z^+$ such that the above expression is}
&\hspace{.5in} \leq c \sup_{x \in G} |\textbf{L}^\ell f(x)| (1 +|x|)^{t} \varphi_0^{- \f{2}{p}}(x).
\end{align}
\vspace{-.05in}
This completes the proof of theorem.
\end{proof}

\begin{Remark}
This part of the characterization does not really use the fact that $G$ is of real rank one. We state the analogue of Theorem \ref{ theo: main theorem spectral proj}  for any Riemannian symmetric space  $X=G/K$ where $G$ a non-compact, connected semisimple Lie group with finite center and not necessarily of real rank one.
(We continue to use the notation we have established for the real rank one case, the modifications needed are mostly obvious.)\\

Let $f \in \mc S^p(X)$  with $0 < p \leq 2$, then $P_\lambda f$ is a complex valued function on $\mf a^*_\e \times X$ where $\mf a^*_\e\hspace{-.07in} = \hspace{-.07in}\lt\{\lambda \in \mf a^*_\C \simeq \C^n|~ \|w \lambda (X)\| \hspace{-.05in}\leq \e \rho(X) \mbox{~for all~} X \in \mf a \mbox{~and~} w \in W \rt\}$ and $P_\lambda f$ satisfies the following properties:
\begin{enumerate}
  \item For each $\lambda \in \mf a^*_\e$, $P_\lambda f \hspace{-.05in}\in \mathcal E_\lambda(X)\hspace{-.05in}= \hspace{-.05in} \lt\{g \in \mathcal C^\infty(X)|\mathbf{L}~  g = - (\|\rho\|^2+ \langle \lambda, \lambda \rangle_1)g \rt\}$.
  \item For each $x \in X$, $\lambda \mapsto P_\lambda f(x)$ is a $W$-invariant holomorphic function on the interior of the complex tube $\mf a^*_\e$ and it extends as a continuous function to the closed tube.\\
      For  each $\delta \in \what{K}_M$, the $\delta$-projection $(P_\lambda f)^\delta$ is a $d_\delta \times d_\delta$ matrix valued function, where $d_\delta = dim V_\delta$. Also, $x \mapsto (P_\lambda f)^\delta(x)$  identically vanishes on each hypersurface in $Int \mf a^*_\e$ on which the polynomial $det Q_\delta(\rho - i \lambda)$ vanishes. (Here  $Q_\delta(\rho- i \lambda)$ is a $\ell_\delta \times \ell_\delta$ ($\ell_\delta = dim V_\delta^M$) matrix of constant coefficient polynomials in $(\rho-i \lambda)$, being the higher rank analogue of the Kostant polynomial \cite[Ch~III, \S 3]{Helgason-gas}.)
  \item For each $\textbf{D}, \textbf{E} \in \mathcal U(\mathfrak g)$, $Q \in \mathbf{S}(\mf a)$,  $ n, s \in \Z^+ \cup
\{0\}$ and for all real number  $ r_p < \frac{2p-2}{p}$, one can find integers $\ell, t \in \Z^+$ and a positive constant $c$ depending on $n, s, r_p$ and degree of the polynomial $Q$ such that:
\begin{align}
&\sup_{x \in G, \lambda \in Int\mathfrak a^*_\e }
\bigg|Q(\partial \lambda) P_\lambda f(\textbf{D}; x; \textbf{E})
\bigg| (1+ |x|)^n (1+ \|\lambda\|)^s \varphi_0^{-r_p}(x) \nonumber\\
& \hspace{1.6in}\leq c \sup_{x \in G} |\textbf{L}^\ell f(x)| (1 + |x|)^t \varphi_0^{-\f{2}{p}}(x) < +\infty\nonumber,
\end{align}
where $Q(\partial \lambda) := Q\lt(\f{\partial}{\partial \lambda_1}, \cdots, \f{\partial}{\partial \lambda_n} \rt)$.
\end{enumerate}
\end{Remark}

For each $\e >0$, let us now define a function space $\mc P_{\e}(X)$.
\begin{Definition}
\label{def:image space under spectral projn}
For $\e > 0$, then $\mc P_\e(X)$ denotes the class of functions $(\lambda, x) \mapsto f_\lambda(x)$ defined on $\mf a^*_\e \times X$ and satisfying the following conditions:
\begin{enumerate}
  \item For each $x \in X$ the function $\lambda \mapsto f_\lambda(x)$ is an
even $\mc C^\infty$ function on $\mathfrak a^*$ and is analytic
on the interior of the strip $\mathfrak a^*_\e =\{ \lambda~ |~ |\Im{\lambda}| \leq \e \}$. On the
boundary it extends as a continuous function.
  \item For each $\lambda \in \mathfrak a^*_\e$
the map $x \mapsto f_\lambda(x)$ is a $\mc C^\infty$ function on $X$, an eigenfunction of $\textbf{L}$, $f_\lambda \in \mathcal E_\lambda(X) $. \\
Moreover,  for each $\delta \in \what{K}_M$ and $x \in G$, the function $\lambda \mapsto   Q_\delta(1 -i \lambda)^{-1}f_{\lambda}^\delta(x) $ is holomorphic on $Int\mf a^*_\e$.
  \item  For each $\textbf{D},\textbf{E} \in \mathcal U
(\mathfrak g_\C) $ and $m, n, s \in \Z^+ \cup\{0\}$
\begin{equation}
\label{eq:decay/growth }
\sup_{x \in G, \lambda \in Int\mathfrak a^*_\e} \bigg|
\left(\frac{d}{d \lambda}  \right)^m f_\lambda(\textbf{D}; x; \textbf{E}) \bigg|
(1+|x|)^n (1+|\lambda|)^s \varphi_0^{-r_\e} (x) < + \infty,
\end{equation}
where $r_\e < \frac{1 - \e}{1+ \e}$.
\end{enumerate}
\end{Definition}
It is easy to verify that $\mc P_{\e}(X)$ is a Fr\'{e}chet space with the topology induced by the countable family of seminorms (\ref{eq:decay/growth }).\\
 We shall conclude this section by restating the Theorem \ref{ theo: main theorem spectral proj} in the light of Definition \ref{def:image space under spectral projn}.
 \begin{Theorem}
 \label{theo: main theorem spectral proj-2nd form}
 The spectral projection $f \mapsto P_\lambda f$ is a continuous map from the Schwartz space $\mc S^p(X)$ ($0 < p \leq 2$)  into $\mc P_{\e}(X)$, where $\e = \lt( \f{2}{p}-1 \rt)$.
 \end{Theorem}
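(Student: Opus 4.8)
The plan is to deduce Theorem \ref{theo: main theorem spectral proj-2nd form} as a direct reformulation of Theorem \ref{ theo: main theorem spectral proj}. First I would observe that the three defining conditions of $\mc P_\e(X)$ in Definition \ref{def:image space under spectral projn} correspond precisely to properties (i), (ii), (iii) of Theorem \ref{ theo: main theorem spectral proj}: condition (1) of the definition (evenness, analyticity on $Int\mf a^*_\e$, continuity up to the boundary) is property (ii) of the theorem; condition (2) (membership in $\mc E_\lambda(X)$ together with holomorphy of $\lambda\mapsto Q_\delta(1-i\lambda)^{-1}f_\lambda^\delta(x)$) combines property (i) and the $\delta$-projection assertions of property (ii); and condition (3) of the definition is exactly the seminorm estimate (iii) once one checks that $\e=(2/p-1)$ forces $r_\e<\frac{1-\e}{1+\e}$ to coincide with the admissible range $r_p<\frac{2p-2}{p}$. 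This last point is a one-line computation: with $\e=\frac2p-1$ one has $1+\e=\frac2p$ and $1-\e=2-\frac2p=\frac{2p-2}{p}$, so $\frac{1-\e}{1+\e}=\frac{(2p-2)/p}{2/p}=\frac{2p-2}{p}\cdot\frac p2=\frac{p-1}{1}\cdot\frac{2}{2}$; more carefully $\frac{1-\e}{1+\e}=\frac{2-2/p}{2/p}=p-1$, and $\frac{2p-2}{p}$ versus $p-1$ — these need to be reconciled, and I expect the intended reading is that the exponent thresholds match after the identification, so I would simply point to the estimate (iii) of the theorem as giving precisely the finiteness of (\ref{eq:decay/growth }).

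Having matched the pointwise conditions, I would then address the word ``continuous'' in the statement, which is the only genuinely new content. For this I would unwind Theorem \ref{ theo: main theorem spectral proj}(iii): for each choice of $\textbf D,\textbf E\in\mc U(\mf g)$ and $m,n,s$, the corresponding seminorm of $P_\lambda f$ in $\mc P_\e(X)$ is bounded by $c\sup_{x\in G}|\textbf L^\ell f(x)|(1+|x|)^t\varphi_0^{-2/p}(x)$, which is (a constant times) one of the defining seminorms $\mu_{\textbf L^\ell, 1, t}(f)$ of $\mc S^p(X)$ from (\ref{Schwartz space on X}). Since a linear map between Fréchet spaces is continuous precisely when each seminorm of the target is dominated by a finite combination of seminorms of the source, this bound — valid with $\ell,t$ depending only on the data $\textbf D,\textbf E,m,n,s$ — establishes continuity. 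I would also note linearity of $f\mapsto P_\lambda f$, which is immediate from $P_\lambda f=f\ast\varphi_\lambda$.

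The only remaining gap is that Theorem \ref{ theo: main theorem spectral proj} literally produces a function $P_\lambda f$ for $f\in\mc S^p(X)$ and verifies it lies in $\mc P_\e(X)$; one should also remark that the holomorphy condition on $\lambda\mapsto Q_\delta(1-i\lambda)^{-1}(P_\lambda f)^\delta(x)$ demanded in Definition \ref{def:image space under spectral projn}(2) is supplied by Proposition \ref{proposition-for decay of delta proj of Pf}(ii) (equivalently Remark \ref{rem:ch2:zeros of P lambda f x}), using the factorization $\Phi_{\lambda,\delta}(ka_tK)=\delta(k)Q_\delta(1+i\lambda)\Phi(\lambda,t)$. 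So the proof is essentially a bookkeeping argument: (1) recall $P_\lambda f\in\mc P_\e(X)$ from Theorem \ref{ theo: main theorem spectral proj}; (2) recall the estimate (iii) there; (3) interpret the right-hand side as a continuous seminorm on $\mc S^p(X)$ and the left-hand side as (a cofinal family of) seminorms on $\mc P_\e(X)$; (4) conclude continuity. I do not anticipate a serious obstacle; the mildest care is needed only in confirming the exponent identification $r_\e\leftrightarrow r_p$ under $\e=2/p-1$ and in checking that the family of seminorms appearing on the right of (iii), namely those built from $\textbf L^\ell$ and powers of $(1+|x|)$, is cofinal among the seminorms (\ref{Schwartz space on X}) defining the topology of $\mc S^p(X)$ — which is standard since the Laplacian generates (together with the enveloping algebra elements used to define the seminorms) enough ellipticity to control all the $\mu_{\textbf D,\textbf E,n}$.
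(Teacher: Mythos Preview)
Your proposal is correct and matches the paper's approach: the paper gives no separate proof, simply presenting Theorem \ref{theo: main theorem spectral proj-2nd form} as a restatement of Theorem \ref{ theo: main theorem spectral proj} in the language of Definition \ref{def:image space under spectral projn}, with continuity implicit in the seminorm estimate (iii). The exponent mismatch you flag between $r_p<\frac{2p-2}{p}=1-\e$ and $r_\e<\frac{1-\e}{1+\e}$ is a genuine inconsistency in the paper's text (the two thresholds agree only at $\e=0$ and $\e=1$), and the paper offers no reconciliation either, so your decision to treat them as intended to coincide is the sensible reading.
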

In the next section we shall obtain  sufficient conditions for the image of $\mc S^p(X)$ under
the transform $f \mapsto P_\lambda f$. The fact that $G$ is of real rank-one  plays a crucial role
there.


\section{\textbf{Sufficient Conditions}}
\label{sec: Sufficient Conditions}

We begin this section with the definition of a specific subspace of the function space $\mc P_{\e}(X)$ for each $\e \geq 0$.
\begin{Definition}
\label{def:K-finite subspace of teh image under sp-projc}
We denote by $\mc P_{\e}(X)_K$, for each $\e > 0$, the class of functions $f_\lambda(x)$ in $\mc P_\e(X)$ which are of left-$K$-finite type in the $x$ variable, where the finite set of $\delta \in \what{K}_M$ involved can be chosen independently of $\lambda$.
\end{Definition}
In this section we shall try to establish a sufficient condition for
a measurable function $(\lambda, x)\mapsto f_{\lambda}(x) \in \mc P_\e(X)_K$ to be of the form $P_\lambda f(x)$ with some
$f \in \mc S^p(X)$ for  suitable $0 < p\leq 2$. Let us fix one $\e \geq 0 $ and
 a function $f_\lambda(x) \in \mc P_\e(X)_K$.

Because of the decay (\ref{eq:decay/growth })
the  integral
\begin{equation}
f_n(x) := (-1)^n\int_{\mathfrak a^{*+}}~ (1+ \lambda^2)^n
f_\lambda(x) ~ |\textbf{c}(\lambda)|^{-2} ~d \lambda, ~~ (~n \in \Z^+~)
\end{equation}
converges absolutely,
where $\textbf{c}(\lambda)$ is the Harish-Chandra $\textbf{c}$-function.
Let us set $f_0=f $, i.e
\begin{equation}
\label{Inversion}
f(x) = \int_{\mathfrak a^{*+}}
f_\lambda(x) |\textbf{c}(\lambda)|^{-2} d \lambda.
\end{equation}
It also follows from the specified decay (\ref{eq:decay/growth }) of the function $(\lambda, x) \mapsto f_\lambda(x)$:
 that for $n = 0, 1, \cdots, $
~  the function $f_n \in \mc C^\infty(X)$. As
$f_\lambda(\cdot) \in \mathcal E_\lambda(X)$, it can be shown that
$\textbf{L}^n f = f_n$ .\\
For each $\lambda \in \mathfrak a^*_\e$ and  $\delta \in \widehat{K}_M$ we define the operator valued left $\delta$-projection by
\begin{equation}
\label{eq:delta projection of f-lambda-x}
f_\lambda^\delta(x) = \int_K f_\lambda(kx) \delta(k^{-1}) dk.
\end{equation}
 It is clear from the definition of the function space $\mc P_\e(X)_K$ that for each $\delta \in \widehat{K}_M$ and
each $\lambda \in \mathfrak a^*_\e$,
$f_\lambda^\delta \in \mathcal E_\lambda^\delta(X)$.
The  Peter-Weyl decomposition of the function $f_\lambda(\cdot)$ is as follows
\begin{equation} \label{breaking of main function}
f_\lambda(x) = \sum_{\delta \in \what{K}_M}trf_\lambda^\delta(x), ~~~~~\lambda  \in \mf a^*_\e.
\end{equation}
As $f_\lambda$ is assumed to be left $K$-finite, so in the above decomposition (\ref{breaking of main function}) all but finitely many terms are identically zero functions. Let us denote $\Gamma(f)$ for the finite subset of $\what{K}_M$ corresponding to the function $f_\lambda$ for which the summands are non zero functions. It follows from the earlier discussion that for each $\delta \in \Gamma(f)$ and $\lambda \in \mf a^*_\e$, $x \mapsto tr f_\lambda^\delta(x)$ is of left $\check{\delta}$-type. Hence $trf_\lambda^\delta(\cdot) \in \mc E_\lambda(\check{\delta}, X)$.
\begin{Lemma}
For  each $\delta \in \Gamma(f)$ the map $(\lambda, x) \mapsto f_\lambda^\delta(x)$
satisfies the decay
\begin{equation}\label{decay of operator valued projection}
\sup_{x \in X; \lambda \in Int\mathfrak a^*_\e } \bigg\|
\bigg(\frac{d}{d \lambda}  \bigg)^m f^\delta_\lambda(x) \bigg\|_{\mathbf{2}} (1+|x|)^n (1+|\lambda|)^s
\varphi_0^{-r_\e}(x) < \texttt{K} <+\infty
\end{equation}
where  $\texttt{K} = d_\delta^{3/2} \cdot c$, the constant $c$ being independent of $\delta$.
\end{Lemma}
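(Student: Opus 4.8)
The plan is to reduce the estimate on the operator-valued projection $f_\lambda^\delta$ directly to the scalar-valued estimate \eqref{eq:decay/growth } that is built into the definition of $\mc P_\e(X)_K$. First I would start from the defining integral \eqref{eq:delta projection of f-lambda-x}, namely $f_\lambda^\delta(x) = \int_K f_\lambda(kx)\,\delta(k^{-1})\,dk$. Since differentiation in $\lambda$ commutes with the integration over the compact group $K$ and with the left translation $x \mapsto kx$, we may write $\left(\f{d}{d\lambda}\right)^m f_\lambda^\delta(x) = \int_K \left[\left(\f{d}{d\lambda}\right)^m f_\lambda\right](kx)\,\delta(k^{-1})\,dk$. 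Taking the Hilbert--Schmidt norm inside the integral and using $\|\delta(k^{-1})\|_{\mathbf 2} = d_\delta^{1/2}$, we get
\begin{equation*}
\left\| \left(\f{d}{d\lambda}\right)^m f_\lambda^\delta(x) \right\|_{\mathbf 2} \leq d_\delta^{1/2} \int_K \left| \left(\f{d}{d\lambda}\right)^m f_\lambda(kx) \right| dk.
\end{equation*}

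The next step is to control the factors $(1+|x|)^n$ and $\varphi_0^{-r_\e}(x)$ under the left translation $x \mapsto kx$. Here I would invoke the $K$-bi-invariance of $|\cdot|$ (it is the Riemannian distance from $eK$, hence $|kx| = |x|$ for $k \in K$) and the bi-$K$-invariance of $\varphi_0$, so that $(1+|kx|)^n = (1+|x|)^n$ and $\varphi_0^{-r_\e}(kx) = \varphi_0^{-r_\e}(x)$ for all $k \in K$. Consequently, multiplying the pointwise bound above by $(1+|x|)^n (1+|\lambda|)^s \varphi_0^{-r_\e}(x)$ and pulling these (translation-invariant) weights inside the $K$-integral gives
\begin{equation*}
\left\| \left(\f{d}{d\lambda}\right)^m f_\lambda^\delta(x) \right\|_{\mathbf 2} (1+|x|)^n (1+|\lambda|)^s \varphi_0^{-r_\e}(x) \leq d_\delta^{1/2} \int_K \left| \left(\f{d}{d\lambda}\right)^m f_\lambda(kx) \right| (1+|kx|)^n (1+|\lambda|)^s \varphi_0^{-r_\e}(kx)\, dk.
\end{equation*}
Taking the supremum over $x \in X$ and $\lambda \in Int\,\mf a^*_\e$, the integrand is bounded by the finite seminorm in \eqref{eq:decay/growth } (with $\mathbf D = \mathbf E = 1$), which is a constant $c$ depending only on $m, n, s$ and on the fixed function $f$, but \emph{not} on $\delta$; since $\int_K dk = 1$, the $K$-integral contributes nothing further. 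This yields the bound with constant $\texttt{K} = d_\delta^{1/2}\cdot c$.

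The only discrepancy with the claimed statement is the stated power $d_\delta^{3/2}$ rather than $d_\delta^{1/2}$; I would simply observe that $d_\delta^{1/2} \leq d_\delta^{3/2}$ (as $d_\delta \geq 1$), so the asserted (weaker) inequality follows a fortiori — the extra powers of $d_\delta$ are presumably kept for uniformity with later summation-over-$\delta$ arguments. There is essentially no hard step here: the proof is a routine interchange of differentiation and integration combined with the left-$K$-invariance of the geometric weights $|x|$ and $\varphi_0$. The one point deserving a word of care is the legitimacy of differentiating under the integral sign in $\lambda$, which is justified by the holomorphy of $\lambda \mapsto f_\lambda(kx)$ on $Int\,\mf a^*_\e$ together with the local uniform bounds coming from \eqref{eq:decay/growth }, allowing a dominated-convergence / Morera argument on the compact set $K$.
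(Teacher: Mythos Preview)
Your argument is correct and follows exactly the same route as the paper's proof: differentiate under the $K$-integral, use the left-$K$-invariance of the weights $|x|$ and $\varphi_0$, and bound $\|\delta(k^{-1})\|_{\mathbf 2}=d_\delta^{1/2}$. The reason you obtain $d_\delta^{1/2}$ rather than $d_\delta^{3/2}$ is that the definition \eqref{eq:delta projection of f-lambda-x} you quote omits the normalizing factor $d_\delta$ present in the standard definition \eqref{eq:pri:matrix valud delta projection}; the paper's own proof tacitly reinstates this factor, which accounts precisely for the extra $d_\delta$.
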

\begin{proof}
The assertion is true because
\begin{align*}
&\lt\{ \lt( \frac{d}{d \lambda} \rt)^m f^\delta_\lambda(x) \rt\}  (1+|x|)^n
(1+|\lambda|)^s \varphi_0^{-r_\e}(x)\\
&\hspace{.4in} = d_\delta \left\{\int_K \bigg(\frac{d}{d \lambda}  \bigg)^m f_\lambda(kx)
\delta(k^{-1}) dk \right\} (1+|x|)^n (1+|\lambda|)^s \varphi_0^{-r_\e}(x) \\
&\hspace{.4in} =  d_\delta \int_K \left\{\bigg(\frac{d}{d \lambda}  \bigg)^m f_\lambda(kx)\right\}
\delta(k^{-1})  (1+|kx|)^n (1+|\lambda|)^s \varphi_0^{-r_\e}(kx)  dk.
\end{align*}
Now taking Hilbert Schmidt norms on both sides and using the fact $\|\delta(k)\|_{\textbf{2}}
= \sqrt{d_\delta} $ we get an inequality from which we get
the required conclusion. (\emph{ It is easy to check that the $\delta$ dependent part in the dominating constant is precisely $d_\delta^{\frac{3}{2}}$}).
\end{proof}
 An immediate corollary is the following.
\begin{Corollary}
For each $m, n, s \in \Z^+ \cup \{0\}$
and for each fixed $\delta \in \Gamma(f)$
\be\label{decay of the trace}
\sup_{x \in X; \lambda \in Int\mathfrak a^*_\e } \bigg|
\bigg(\frac{d}{d \lambda}  \bigg)^mtrf^\delta_\lambda(x) \bigg | (1+|x|)^n (1+|\lambda|)^s
\varphi_0^{-r_\e}(x)  <+\infty.
\ee
\end{Corollary}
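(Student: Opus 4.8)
The plan is to read this off directly from the preceding Lemma, since passing from the operator-valued projection $f^\delta_\lambda$ to its scalar trace $tr f^\delta_\lambda$ costs only a factor depending on the (now fixed) dimension $d_\delta$. First I would note that the trace map is linear, so it commutes with the $\lambda$-derivative: for every $m \in \Z^+ \cup \{0\}$, $x \in X$ and $\lambda \in \mf a^*_\e$,
\[
\lt(\f{d}{d\lambda}\rt)^m tr f^\delta_\lambda(x) = tr\lt[\lt(\f{d}{d\lambda}\rt)^m f^\delta_\lambda(x)\rt].
\]
Thus the quantity to be estimated is just the trace of the matrix whose Hilbert--Schmidt norm is already controlled by (\ref{decay of operator valued projection}).

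Next I would invoke the elementary inequality $|tr A| \leq \sqrt{d_\delta}\, \|A\|_{\mathbf{2}}$, valid for any $d_\delta \times d_\delta$ matrix $A$; it is nothing but Cauchy--Schwarz applied to the diagonal entries, $\lt|\sum_i a_{ii}\rt| \leq \sqrt{d_\delta}\,\lt(\sum_i |a_{ii}|^2\rt)^{1/2} \leq \sqrt{d_\delta}\, \|A\|_{\mathbf{2}}$. Applying this with $A = \lt(\f{d}{d\lambda}\rt)^m f^\delta_\lambda(x)$ and multiplying through by the weight $(1+|x|)^n (1+|\lambda|)^s \varphi_0^{-r_\e}(x)$ gives
\[
\bigg|\lt(\f{d}{d\lambda}\rt)^m tr f^\delta_\lambda(x)\bigg| (1+|x|)^n (1+|\lambda|)^s \varphi_0^{-r_\e}(x) \leq \sqrt{d_\delta}\, \bigg\|\lt(\f{d}{d\lambda}\rt)^m f^\delta_\lambda(x)\bigg\|_{\mathbf{2}} (1+|x|)^n (1+|\lambda|)^s \varphi_0^{-r_\e}(x).
\]
Taking the supremum over $x \in X$ and $\lambda \in Int \mf a^*_\e$ and using the Lemma's bound $\texttt{K} = d_\delta^{3/2} c$, the right-hand side is at most $d_\delta^{2} c < +\infty$, because $\delta \in \Gamma(f)$ is held fixed and $c$ is independent of $\delta$. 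This is precisely (\ref{decay of the trace}).

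There is essentially no obstacle here; the only point requiring any care is keeping track of the powers of $d_\delta$, and these are harmless since $\delta$ is fixed throughout. (Should one later need the estimate with an explicit, uniform dependence on $\delta$ — as is done for $P_\lambda f$ in Section \ref{sec: Necessary Conditions} — one would instead absorb these factors using the polynomial bound $d_\delta \leq c'(1+|\delta|)^{2r}$ from (\ref{delta-estimate}); but that refinement is not needed for the Corollary as stated.)
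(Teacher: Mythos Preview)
Your proof is correct and follows exactly the approach the paper intends: the Corollary is stated as ``an immediate corollary'' of the preceding Lemma, and your argument---commuting the trace with the $\lambda$-derivative and bounding $|tr A|$ by $\sqrt{d_\delta}\,\|A\|_{\mathbf{2}}$---is precisely the routine step that justifies this. The bookkeeping of the $d_\delta$ factors is accurate and, as you note, irrelevant here since $\delta$ is fixed.
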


\begin{Lemma}
The function $f$
obtained in (\ref{Inversion}) is also left-$K$-finite and moreover,
\begin{equation}
trf^\delta(x) = \int_{\mathfrak a^{*+}} trf_\lambda^\delta(x)~
|\textbf{c}(\lambda)|^{-2} d \lambda ~.
\end{equation}
\end{Lemma}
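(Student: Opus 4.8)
The plan is to prove the two assertions—that $f$ is left-$K$-finite and that its $\delta$-projections are obtained by integrating those of $f_\lambda$ against $|\mathbf{c}(\lambda)|^{-2}$—essentially by interchanging the $K$-integration defining the $\delta$-projection with the $\mathfrak a^{*+}$-integration defining $f$ from $f_\lambda$. First I would write, for a fixed $\delta \in \what{K}_M$,
\[
f^\delta(x) = d_\delta \int_K f(kx)\,\delta(k^{-1})\,dk = d_\delta \int_K \left( \int_{\mathfrak a^{*+}} f_\lambda(kx)\,|\mathbf{c}(\lambda)|^{-2}\,d\lambda \right) \delta(k^{-1})\,dk,
\]
using the defining formula (\ref{Inversion}). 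The key point is that the double integral converges absolutely: by the decay estimate (\ref{eq:decay/growth }) with, say, $n = 0$ and $s$ large enough to beat the polynomial growth $|\mathbf{c}(\lambda)|^{-2} \leq c(1+|\lambda|)^b$ of (\ref{estimate of  c funct.}), together with $\|\delta(k^{-1})\|_{\mathbf 2} = \sqrt{d_\delta}$ and the compactness of $K$, the integrand is dominated by an integrable function of $(k,\lambda)$. Hence Fubini's theorem applies and I may swap the order of integration to get
\[
f^\delta(x) = \int_{\mathfrak a^{*+}} \left( d_\delta \int_K f_\lambda(kx)\,\delta(k^{-1})\,dk \right) |\mathbf{c}(\lambda)|^{-2}\,d\lambda = \int_{\mathfrak a^{*+}} f_\lambda^\delta(x)\,|\mathbf{c}(\lambda)|^{-2}\,d\lambda,
\]
recognizing the inner integral as the operator-valued $\delta$-projection (\ref{eq:delta projection of f-lambda-x}) of $f_\lambda$ (up to the normalization $d_\delta$, consistently with how $f_\lambda^\delta$ and $f^\delta$ are each defined). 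Taking traces and using linearity of the trace and of the integral then yields the stated identity $trf^\delta(x) = \int_{\mathfrak a^{*+}} trf_\lambda^\delta(x)\,|\mathbf{c}(\lambda)|^{-2}\,d\lambda$.

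For the left-$K$-finiteness: since $f_\lambda \in \mc P_\e(X)_K$, there is a finite set $\Gamma(f) \subset \what{K}_M$, chosen independently of $\lambda$, such that $trf_\lambda^\delta \equiv 0$ for all $\delta \notin \Gamma(f)$. Plugging this into the integral formula just derived gives $trf^\delta(x) = 0$ for every $\delta \in \what{K}_M \setminus \Gamma(f)$, so $f$ is left-$K$-finite with the same finite set $\Gamma(f)$. This is immediate once the interchange of integrals is justified.

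The main obstacle, such as it is, is the justification of Fubini, i.e. producing the absolutely integrable majorant; everything else is formal. This is not deep, but it does require care to invoke (\ref{eq:decay/growth }) with the right choice of the exponent $s$ (large enough, relative to the exponent $b$ in (\ref{estimate of  c funct.}), that $(1+|\lambda|)^{-s}(1+|\lambda|)^b$ is integrable over the one-dimensional $\mathfrak a^{*+}$) and to note that the factor $\varphi_0^{-r_\e}(kx)$ appearing in the seminorm is, for fixed $x$ and $k$ ranging over the compact $K$, bounded below away from zero, so its reciprocal is bounded above; combined with $\|\delta(k^{-1})\|_{\mathbf 2} = \sqrt{d_\delta}$ and $\mathrm{vol}(K) < \infty$ this gives the needed uniform domination over $(k,\lambda) \in K \times \mathfrak a^{*+}$. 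I would also remark that the very same argument, applied to the functions $f_n$ and using that $\mathbf{L}^n$ commutes with the $\delta$-projection (it is $G$-invariant), shows $(\mathbf{L}^n f)^\delta = \mathbf{L}^n(f^\delta)$ is obtained analogously, which is what will be needed downstream; but for the present statement only the $n=0$ case is required.
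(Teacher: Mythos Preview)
Your proof is correct and follows essentially the same idea as the paper's: both arguments hinge on interchanging the $K$-integration defining the $\delta$-projection with the $\lambda$-integration defining $f$, and both justify this by the decay in (\ref{eq:decay/growth }) against the polynomial growth of $|\mathbf{c}(\lambda)|^{-2}$. The paper organizes it slightly differently---it first writes $f(x)=\sum_{\delta\in\Gamma(f)}\psi_\delta(x)$ with $\psi_\delta(x)=\int_{\mathfrak a^{*+}}trf_\lambda^\delta(x)\,|\mathbf{c}(\lambda)|^{-2}d\lambda$ and then verifies $\psi_\delta=d_\delta\,\chi_{\check\delta}\ast\psi_\delta$ by the same Fubini step---whereas you compute the matrix-valued $f^\delta$ directly; these are two sides of the same computation.
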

\begin{proof}
Using (\ref{Inversion}) and (\ref{breaking of main function})
we get the following:
\begin{align}\label{1-lemma}
f(x) &= \int_{\mathfrak a^{*+}} \sum_{\delta \in \Gamma(f)}trf_\lambda^\delta(x)
~|\textbf{c}(\lambda)|^{-2} d \lambda \nonumber,\\
&= \sum_{\delta \in \Gamma(f)} \int_{\mathfrak a^{*+}}trf_\lambda^\delta(x)
~|\textbf{c}(\lambda)|^{-2} d \lambda.
\end{align}
Let us denote $\psi_\delta(x) = \int_{\mathfrak a^{*+}}trf_\lambda^\delta(x)
~|\textbf{c}(\lambda)|^{-2} d \lambda$. The integral converges absolutely because of the decay
(\ref{decay of the trace}).  We have already noticed that $tr f_\lambda^\delta(\cdot)$ are of left $\check{\delta}$-type and we only need the routine checking
\begin{align}
\psi_\delta(x) &= d_\delta \int_{\mathfrak a^{*+}} \{ \chi_{\check{\delta}}
\ast trf_\lambda^\delta~ \}(x) ~|\textbf{c}(\lambda)|^{-2} d \lambda\nonumber\\
&=d_\delta \int_{\mathfrak a^{*+}} \left\{ \int_K \chi_{\check{\delta}}
(k^{-1} )tr f_\lambda^\delta (kx) dk  \right\} ~|\textbf{c}(\lambda)|^{-2}
d \lambda \nonumber\\
&=d_\delta \int_{K} \chi_{\check{\delta}} (k^{-1} ) \left
\{ \int_{\mathfrak a^{*+}}trf_\lambda^\delta (kx) dk   ~|\textbf{c}(\lambda)|^{-2} d\lambda \right\}
dk  \nonumber\\
&= d_\delta \{ \chi_{\check{\delta}} \ast \psi_\delta\}(x)
\end{align}
to conclude that  each $\psi_\delta$ is a scalar valued left $\check{\delta}$-type.
The rest follows from the Peter-Weyl decomposition.
\end{proof}
So far we have noted that the function $f$ obtained in (\ref{Inversion})
is in $\mc C^\infty(X) $ and it is of left-$K$-finite type. Now we shall try to show that
$f \in \mc S^p(X)$ for some $0 < p \leq 2$. Towards that we shall first try to obtain a structural form of $f_\lambda^\delta$
analogous to the one given in Proposition \ref{delta projection of P-lambda}.
The assumption that $f_\lambda^\delta \in \mc E_\lambda^\delta(X)$ will now play a crucial role. The following theorem is the key to the desired  form of $f_\lambda^\delta$. Let $\delta \in \what{K}_M$ and $V_\delta$ ($d_\delta= dim V_\delta$) be the representation space for $\delta$ with the orthonormal basis $ v_1, v_2, \cdots, v_{d_\delta}$ where $V_\delta^M = \C v_1$.
\begin{Theorem}
\label{theo:basis of the eigen space}
\emph{[Helgason, \cite[Theorem 1.4, p-133]{Helgason70}]}\\
Let  $\lambda \in \C$ be such that  $\Re \langle i \lambda, \alpha \rangle \geq 0$, where $\alpha$ is positive restricted  root. Then the functions
\begin{equation}
\label{eq:basis vector of the eigen space}
\Psi_{\lambda,\check{\delta}j}(x)= \sqrt{d_\delta} \int_K e^{-(i \lambda+1) H(x^{-1}k)}
\left\langle \delta(k) v_1~,~v_j\right\rangle dk, \hspace{.5in} 1 \leq j \leq d_\delta,
\end{equation}
form a basis of the eigenspace $\mc E_\lambda(\check{\delta}, X)$.
\end{Theorem}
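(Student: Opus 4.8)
The statement to be proved is Helgason's theorem (Theorem \ref{theo:basis of the eigen space}) that for $\lambda$ with $\Re\langle i\lambda,\alpha\rangle\ge 0$ the functions $\Psi_{\lambda,\check\delta j}$, $1\le j\le d_\delta$, form a basis for the eigenspace $\mc E_\lambda(\check\delta,X)$. Since this is quoted as a known result, I would structure the plan around (a) showing the $\Psi_{\lambda,\check\delta j}$ actually lie in $\mc E_\lambda(\check\delta,X)$, (b) showing they span, and (c) showing they are linearly independent, so that the count $d_\delta = \dim\mc E_\lambda(\check\delta,X)$ comes out right.

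First I would check membership. Each $\Psi_{\lambda,\check\delta j}$ is, up to the scalar $\sqrt{d_\delta}$, the $(1,j)$ matrix coefficient of $\Phi_{\lambda,\delta}$ (see the integral representation (\ref{eq:pri:gen. sp. funct.})), so the fact that $x\mapsto\Phi_{\lambda,\delta}(x)$ is a joint eigenfunction of $\mathbf D(X)$ (Remark \ref{properties of the gen-sp functions}(2)) immediately gives $\mathbf L\Psi_{\lambda,\check\delta j} = -(\lambda^2+1)\Psi_{\lambda,\check\delta j}$; the transformation law $\Phi_{\lambda,\delta}(kx)=\delta(k)\Phi_{\lambda,\delta}(x)$ together with $\delta(m)v_1=v_1$ forces each $\Psi_{\lambda,\check\delta j}$ to be of left $\check\delta$-type, using the analogue of the computation in Remark \ref{properties of the gen-sp functions}(1). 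So $\Psi_{\lambda,\check\delta j}\in\mc E_\lambda(\check\delta,X)$.

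Next, spanning. The key input is the Poisson-type representation of eigenfunctions: any $g\in\mc E_\lambda(X)$ with suitable growth can be written $g(x)=\int_K e^{-(i\lambda+1)H(x^{-1}k)}\,dT(k)$ for a distribution (or, in the $K$-finite case, an $L^2$-function) $T$ on $K/M$ — this is Helgason's boundary-value theorem, valid precisely under the condition $\Re\langle i\lambda,\alpha\rangle\ge 0$ (so that the relevant $\mathbf c$-function factor is nonvanishing and the Poisson transform is injective). If $g$ is additionally of left $\check\delta$-type, then $T$ transforms under $K$ by $\check\delta$, hence lives in the span of the matrix coefficients $k\mapsto\langle\delta(k)v_1,v_j\rangle$, $1\le j\le d_\delta$ — here one uses $\dim V_\delta^M=1$ (Kostant, quoted in the Preliminaries) to see that only the coefficients coming from $v_1\in V_\delta^M$ survive after integrating against $e^{-(i\lambda+1)H(x^{-1}k)}$, which is right-$M$-invariant in $k$. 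Feeding this $T$ back into the Poisson integral expresses $g$ as a linear combination of the $\Psi_{\lambda,\check\delta j}$. Finally, linear independence follows because the map $T\mapsto\int_K e^{-(i\lambda+1)H(x^{-1}k)}\,dT(k)$ is injective on $K/M$-distributions for $\lambda$ in the stated range (Helgason's injectivity theorem for the Poisson transform), and the $d_\delta$ functions $k\mapsto\langle\delta(k)v_1,v_j\rangle$ are linearly independent on $K$.

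The main obstacle is the surjectivity/injectivity of the Poisson transform on the strip where $\Re\langle i\lambda,\alpha\rangle\ge 0$: this is exactly the delicate point in Helgason's work, since for $i\lambda$ at the "edge" one must argue via the $\mathbf c$-function having no zeros there (which, for the $K$-finite part, is controlled by the Kostant polynomials $Q_\delta(1\pm i\lambda)$ having no zeros in the closed strip, as recorded after (\ref{Structure of the Kostant's polynomial})). Everything else is bookkeeping with the Peter--Weyl decomposition and the one-dimensionality of $V_\delta^M$. Since the paper merely cites \cite{Helgason70}, in practice I would simply invoke that reference for the analytic heart and only spell out the $\delta$-type bookkeeping needed to match it to the present normalization.
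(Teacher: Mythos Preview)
Your proposal is reasonable, but there is nothing to compare it to: the paper does not prove this theorem at all. It is stated as a quotation of Helgason's result \cite[Theorem 1.4, p.~133]{Helgason70} and used as a black box in the proof of Lemma \ref{structure Lemma}. You yourself correctly anticipated this in your final paragraph. Your sketch of the argument (membership via the eigenfunction property of $\Phi_{\lambda,\delta}$, spanning via Helgason's Poisson representation under the simplicity condition $\Re\langle i\lambda,\alpha\rangle\ge 0$, independence via injectivity of the Poisson transform, with $\dim V_\delta^M=1$ controlling the count) is the standard route and matches Helgason's original approach, so it would serve perfectly well as an expanded footnote, but the paper's ``proof'' is simply the citation.
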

We note that by using the definition  (\ref{eq:pri:gen. sp. funct.}) of the generalized spherical functions we can write the basis vectors as follows
\begin{equation}
\label{eq:basis vector of the eigen space,alt form}
\Psi_{\lambda,\check{\delta}j}(x)= \sqrt{d_\delta} \left\langle \Phi_{\lambda, \delta}(x)
v_1~,~v_j\right\rangle.
\end{equation}
\begin{Remark}
\label{rem:reark about the simple lambdas}
For a  group $G$ with real rank one, we have identified the Iwasawa-$A$-subgroup with $\R$. With this normalization $\mf a$, $\mf a^*$ are identified with $\R$ and $\mf a^+$, $\mf a^{*+}$  with $\R^+$. As we are only considering the real rank one group, so there will be a smallest positive restricted root $\alpha$ and at most one more which will be $2 \alpha$. Clearly, $\alpha \in \R^+$. This immediately suggests that for all $\lambda $ with $\Im \lambda \leq 0$, $\Re \langle i \lambda, \alpha \rangle \geq 0 $.\\
Hence, for all $\lambda \in  \{ \lambda \in \mf a^*_\e ~|~ \Im \lambda \leq 0\}$, the vectors $\Psi_{\lambda,\check{\delta}j}(x)$ forms a basis of $\mc E_\lambda(\check{\delta}, X)$.
\end{Remark}

\begin{Lemma}\label{structure Lemma}
For each $\delta \in \widehat{K}_M$ the matrix valued projection
$f^\delta_\lambda $ of $f_\lambda$ for each $\lambda \in \mathfrak a^{*}_\e$
has the following structural form
\be\label{structural form-2}
f^\delta_\lambda(k a_t.0) = \sqrt{d_\delta}~\Phi_{\lambda, \delta}
(ka_t.0) h^\delta(\lambda)
\ee
where  $\Phi_{\lambda, \delta}(k a_t. 0)$ is a $(d_\delta \times 1)$
matrix and $h^\delta(\lambda)$ is a $(1 \times d_\delta) $ matrix
\end{Lemma}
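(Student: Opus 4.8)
The plan is to exploit the fact that for fixed $\lambda \in \mf a^*_\e$ with $\Im\lambda \leq 0$, the column vector-valued function $x \mapsto f^\delta_\lambda(x)$ is, entry by entry, an element of the eigenspace $\mc E_\lambda(\check{\delta},X)$; and by Theorem \ref{theo:basis of the eigen space} (in the form given in Remark \ref{rem:reark about the simple lambdas}) the functions $\Psi_{\lambda,\check{\delta}j}$, $1 \leq j \leq d_\delta$, form a basis of this eigenspace. Concretely, since $f^\delta_\lambda$ satisfies the left-$\delta$-covariance $f^\delta_\lambda(kx) = \delta(k) f^\delta_\lambda(x)$ and $\delta(m) v = v$ for $v$ in the first row (so the matrix $f^\delta_\lambda(x)$ has only its first row nonzero, after the choice of basis $V_\delta^M = \C v_1$), each nonzero scalar entry $(f^\delta_\lambda)_{1j}(x)$ lies in $\mc E_\lambda(\check{\delta},X)$. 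I would expand $(f^\delta_\lambda)_{1j}$ in the basis $\{\Psi_{\lambda,\check{\delta}i}\}_i$: write $(f^\delta_\lambda)_{1j}(x) = \sum_{i=1}^{d_\delta} c_{ij}(\lambda)\,\Psi_{\lambda,\check{\delta}i}(x)$, and observe via (\ref{eq:basis vector of the eigen space,alt form}) that $\Psi_{\lambda,\check{\delta}i}(x) = \sqrt{d_\delta}\,\langle \Phi_{\lambda,\delta}(x) v_1, v_i\rangle = \sqrt{d_\delta}\,(\Phi_{\lambda,\delta}(x))_{i1}$. Hence each entry of $f^\delta_\lambda(x)$ is $\sqrt{d_\delta}$ times a linear combination of the entries of the single column $\Phi_{\lambda,\delta}(x) v_1$, with coefficients depending only on $\lambda$ and $\delta$. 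Repackaging: $f^\delta_\lambda(ka_t.0) = \sqrt{d_\delta}\,\Phi_{\lambda,\delta}(ka_t.0)\,h^\delta(\lambda)$ where $\Phi_{\lambda,\delta}(ka_t.0)$ is the $(d_\delta \times 1)$ matrix formed by that column and $h^\delta(\lambda)$ is the $(1 \times d_\delta)$ matrix of coefficients.

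The steps in order: (i) fix $\lambda$ in the lower half of the strip, reduce to this case using the Weyl-evenness / holomorphy properties built into $\mc P_\e(X)_K$ (Definition \ref{def:image space under spectral projn}(i),(ii)) so that the structural identity, once established on $\{\Im\lambda \leq 0\}$, extends to all of $\mf a^*_\e$ by analytic continuation of both sides; (ii) verify that each nonvanishing entry of $f^\delta_\lambda$ genuinely lies in $\mc E_\lambda(\check{\delta},X)$ — this uses $f_\lambda \in \mc E_\lambda(X)$, the definition (\ref{eq:delta projection of f-lambda-x}) of the $\delta$-projection, and the observation (already made in the text preceding Lemma \ref{structure Lemma}) that $\mathbf{L}$ commutes with left translation so that $f^\delta_\lambda \in \mc E_\lambda^\delta(X)$ and $\mathrm{tr}\,f^\delta_\lambda \in \mc E_\lambda(\check{\delta},X)$; (iii) apply the basis expansion from Theorem \ref{theo:basis of the eigen space}; (iv) rewrite $\Psi_{\lambda,\check{\delta}i}$ via (\ref{eq:basis vector of the eigen space,alt form}) in terms of $\Phi_{\lambda,\delta}$ and collect terms into the claimed matrix product, defining $h^\delta(\lambda)$ thereby.

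The main obstacle I anticipate is twofold. First, one must be careful about the $M$-covariance: the expansion coefficients and the shape of the matrices must be organized so that only the first row of $f^\delta_\lambda(x)$ is nonzero and only the first column of $\Phi_{\lambda,\delta}(x)$ is used — this is exactly what makes the product of a $(d_\delta\times 1)$ and a $(1\times d_\delta)$ matrix the right object, and it relies on $\dim V_\delta^M = 1$ (Kostant) together with the $M$-invariance noted in Remark \ref{properties of the gen-sp functions}(1). Second, the basis theorem of Helgason requires the half-space condition $\Re\langle i\lambda,\alpha\rangle \geq 0$ and hence only gives the identity for $\Im\lambda \leq 0$; promoting it to the whole strip $\mf a^*_\e$ requires knowing that $h^\delta(\lambda)$ extends holomorphically, which one gets because $f^\delta_\lambda(x)$ is holomorphic in $\lambda$ on $\mathrm{Int}\,\mf a^*_\e$ (Definition \ref{def:image space under spectral projn}) and $\Phi_{\lambda,\delta}$ is entire, so $h^\delta(\lambda)$ can be recovered (at least where $\Phi_{\lambda,\delta}(\cdot)v_1 \not\equiv 0$, which holds by Remark \ref{rem:ch2:zeros of P lambda f x}) as a holomorphic function and the identity propagates by uniqueness of analytic continuation. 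The actual extraction of $h^\delta(\lambda)$ from the $c_{ij}(\lambda)$ and the bookkeeping of constants is routine once this framework is in place.
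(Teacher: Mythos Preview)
Your plan is in the right spirit---use Helgason's basis for the eigenspace---but there is a genuine gap in the ``repackaging'' step, and a false auxiliary claim.

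First, the assertion that ``the matrix $f^\delta_\lambda(x)$ has only its first row nonzero'' is incorrect for general $x$. The $M$-fixed structure forces this only at points $x=a_t\in A$ (where $m a_t K=a_t K$); for $x=k a_t$ one has $f^\delta_\lambda(ka_t)=\delta(k)f^\delta_\lambda(a_t)$, which has all rows generically nonzero. Second, and this is the real gap: expanding each scalar entry $(f^\delta_\lambda)_{1j}$ in the basis $\{\Psi_{\lambda,\check{\delta}i}\}_i$ produces a full $d_\delta\times d_\delta$ array of coefficients $c_{ij}(\lambda)$, namely $(f^\delta_\lambda)_{1j}(x)=\sqrt{d_\delta}\sum_i c_{ij}(\lambda)(\Phi_{\lambda,\delta}(x))_{i1}$. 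The desired structural form $f^\delta_\lambda=\sqrt{d_\delta}\,\Phi_{\lambda,\delta}\,h^\delta$ is rank one and forces $(f^\delta_\lambda)_{ij}(x)=\sqrt{d_\delta}(\Phi_{\lambda,\delta}(x))_{i1}h^\delta_j(\lambda)$; to get there from your expansion you must show the coefficient matrix $(c_{ij})$ is scalar. That requires invoking the left-$\delta$-covariance $f^\delta_\lambda(kx)=\delta(k)f^\delta_\lambda(x)$ together with Schur's lemma on the irreducible $\delta$, an argument you do not supply. ``Repackaging'' hides exactly this step.

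The paper sidesteps both issues by expanding the single scalar function $\mathrm{tr}\,f^\delta_\lambda\in\mc E_\lambda(\check{\delta},X)$ in the basis $\{\Psi_{\lambda,\check{\delta}j}\}$, which yields precisely $d_\delta$ coefficients $h^\delta_j(\lambda)$ directly, and then recovers the full matrix from its trace via $f^\delta_\lambda(x)_{\imath\ell}=d_\delta\int_K \mathrm{tr}\,f^\delta_\lambda(kx)\,\langle\delta(k^{-1})v_\ell,v_\imath\rangle\,dk$ (the inverse of the map $\mc Q$ in Theorem~\ref{theo:pri:left delta left delta check identification}); a computation with the Schur orthogonality relations (\ref{Schur orthogonality}) then collapses this to $\sqrt{d_\delta}\,\langle\Phi_{\lambda,\delta}(x)v_1,v_\imath\rangle\,h^\delta_\ell(\lambda)$. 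So Schur enters once, explicitly, rather than being hidden inside an unjustified repackaging.
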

\bpf
We note that by assumption the function $\lambda \mapsto f^\delta_\lambda(\lambda)$ is even for all $x \in X$. Therefore it is enough to establish the structural form (\ref{structural form-2}) for $\lambda$'s in $\mathfrak a^{*}_\e$ with $\Im \lambda \leq 0$.\\
We have noticed that $tr f_\lambda^\delta \in \mathcal E_{\lambda}( \check{\delta}, X)$ for all $\lambda \in
\mathfrak a^*_\e$ and $\delta \in \Gamma(f)$.
Hence for $\lambda \in  \{ \lambda \in \mf a^*_\e ~|~ \Im \lambda \leq 0\}$ we write $tr f^\delta_\lambda (x)$ in terms of the basis vectors given in
(\ref{eq:basis vector of the eigen space,alt form}) as follows.
\begin{align}\label{I}
trf^\delta_\lambda(x) = \sqrt{d_\delta}~
\sum_{j=1}^{d_\delta} h^\delta_j(\lambda) \left\langle    \Phi_{\lambda, \delta}(x) v_1,  v_j
\right\rangle,
\end{align}
where $h^\delta_j(\lambda)$ are coefficients depending on
$\lambda$. Let us denote the $(1 \times d_\delta)$ matrix $h^\delta(\lambda)
= \left( h_1^\delta(
\lambda), \cdots, h_{d_\delta}^\delta(\lambda) \right)$. We also recall the fact that the generalized spherical function $\Phi_{\lambda, \delta}(x)$ vanishes on the orthogonal complement of $V_\delta^M$, so we can regard $\Phi_{\lambda, \delta}(x)$ as the $(d_\delta \times 1)  $ column vector with the entries $\langle \Phi_{\lambda, \delta}(x)v_1, v_j \rangle $. Then it is clear from
(\ref{I}) that
\be
\label{II}
trf^\delta_\lambda(x) = \sqrt{d_\delta}~tr
[ \Phi_{\lambda, \delta}(x) h^\delta(\lambda)].
\ee
Next we shall show that the matrices
$f^\delta_\lambda(x) $ and $\Phi_{\lambda, \delta}(x) h^\delta(\lambda)$ have identical entries.
\begin{align} \label{III}
f_\lambda^\delta(x)_{\imath~\ell} & = \left\langle  f_\lambda^\delta(x) v_\ell, v_\imath \right\rangle \nonumber\\
&= d_\delta\left\langle \int_K trf^\delta_\lambda(kx) \delta(k^{-1}) dk v_\ell, v_\imath   \right\rangle
\nonumber\\
&= d_\delta \int_Ktrf^\delta_\lambda(kx) \left\langle \delta(k^{-1})  v_\ell, v_\imath \right\rangle dk.
\end{align}
Now we use (\ref{II}) to replace $tr f^\delta_\lambda(kx)$ to get
\begin{align}\label{IV}
f_\lambda^\delta(x)_{\imath~\ell} & = d_\delta^{\frac{3}{2}} \sum_{j =1}^{d_\delta}
\int_K  \left\langle \Phi_{\lambda, \delta}(kx) v_1~,~v_j\right\rangle
~\left\langle \delta(k^{-1})  v_\ell, v_\imath \right\rangle  h^\delta_j(\lambda)~dk \nonumber\\
&= d_\delta^{\frac{3}{2}} \sum_{j =1}^{d_\delta} \int_K
\left\langle \delta(k) \Phi_{\lambda, \delta}(x) v_1, v_j\right\rangle ~\left\langle
\delta(k^{-1})  v_\ell, v_\imath \right\rangle  h^\delta_j(\lambda)~dk \nonumber\\
&=d_\delta^{\frac{3}{2}} \sum_{j =1}^{d_\delta} \int_K
\overline{\left\langle \delta(k^{-1}) v_j,  \Phi_{\lambda, \delta}(x) v_1\right\rangle}
~\left\langle \delta(k^{-1}) v_\ell, v_\imath \right\rangle  h^\delta_j(\lambda)~dk.
\end{align}
The representation coefficients $k \mapsto \left\langle
\delta(k)v, u\right\rangle$ ($u, v \in V_\delta$) satisfy the following consequences of the  `Schur's Orthogonality
Relations' : If $u, v, u^\prime, v^\prime \in V_\delta$, then
\be
\label{Schur orthogonality}
\int_K \left\langle \delta(k) u, v  \right\rangle
\overline{\left\langle \delta(k)u^\prime,  v^\prime  \right\rangle} =
d_\delta^{-1} \left\langle u, u^\prime  \right\rangle \overline{\left\langle v, v^\prime
\right\rangle}.
\ee
Using (\ref{Schur orthogonality}) in (\ref{IV}) as also the fact that $\{v_i\}(1 \leq i \leq d_\delta)$ forms an
orthonormal basis of the representation space $V_\delta$ we write
\be
\label{V}
f_\lambda^\delta(x)_{\imath~\ell} = \sqrt{d_\delta} \left\langle
\Phi_{\lambda, \delta}(x)v_1, v_\imath   \right\rangle h^\delta_\ell(\lambda).
\ee
The right hand side of (\ref{V}) is precisely the $(\imath, \ell)$ entry
of the matrix $\sqrt{d_\delta}~ \Phi_{\lambda, \delta}(x) h^\delta(\lambda)$.
Hence the Lemma follows.
\epf

\begin{Remark}
\label{properties of h-delta}
\begin{align}
\label{ali:evenness of h delta}
\mbox{Write\hspace{.3in}}f_\lambda^\delta(x) &= \sqrt{d_\delta}  \Phi_{\lambda, \delta}(x) h^\delta(\lambda)\nonumber\\
&= \sqrt{d_\delta} \{ Q_\delta(1 - i \lambda) \Phi_{\lambda, \delta}(x)\} \{Q_\delta(1 -i \lambda)^{-1} h^\delta(\lambda) \}.
\end{align}
 We notice that $f_\lambda^\delta$ is even in the $\lambda $ variable and  the function $\lambda \mapsto Q_\delta(1 - i \lambda) \Phi_{\lambda, \delta}(x)$ is even by (\ref{Intro- of kostant poly1}). Hence for all $\lambda \in \mf a^*_\e$, the function $\lambda \mapsto Q_\delta(1 -i \lambda)^{-1} h^\delta(\lambda)$ is an even function.
\end{Remark}
At this point we need to look in a different direction.
The matrix entries of the generalized spherical functions are associated with Jacobi
functions. Let $x = k a_t.0 \in X$. Then $$\Phi_{\lambda, \delta~j}(k a_t) = \langle
\Phi_{\lambda, \delta}(k a_t) v_1, v_j \rangle =\langle \delta(k)
\Phi_{\lambda, \delta}( a_t) v_1, v_j \rangle .$$ It can easily be seen that
$\Phi_{\lambda, \delta}(a_t) v \in V_\delta^M$ for all $v \in V_\delta$. Hence on $V_\delta^M$, $\Phi_{\lambda, \delta}(a_t)$ will be a multiplication operator
\begin{equation}
\label{eq:Phi as a multiplication operator}
\Phi_{\lambda, \delta}(a_t) v_1 = \varphi_{\lambda, \delta}(t) v_1,
\end{equation}
 where  $\varphi_{\lambda, \delta}(t)$ is a
function of $t$ depending on $\lambda$ and $\delta$. For each $\delta \in \widehat{K}_M$ and
$\lambda \in \mathfrak a^*_\C$
    the
function $ t \mapsto \varphi_{\lambda , \delta}(t)$ has an expression  in terms of the hypergeometric functions (Helgason \cite{Helgason74}, Koornwinder \cite{Koorn84} )
\begin{equation}
\label{Jacobi function}
\varphi_{\lambda, \delta}(t) = Q_\delta(i \lambda+1) (\alpha +1)^{-1}_r
(\sinh t)^r (\cosh t)^s \varphi_\lambda^{\alpha+r, \beta+s} (t),
\end{equation}
where  $\varphi_\lambda^{\alpha+r, \beta+s}$ is the Jacobi function
of the  first kind with parameters $(\alpha+r, \beta+s)$. The integers $(r,s)$ and the quantities
$\alpha , \beta$ are already introduced in (\ref{Structure of the Kostant's polynomial}). This Jacobi function has the integral representation \cite{Koorn84}:
\begin{equation}
\label{eq:integral representation of Jacobi funct}
\varphi_\lambda^{\alpha+r, \beta+s}(t)= \int_0^1 \int_0^{\pi} |\cosh{t}-\mf r e^{i \theta} \sinh{t} |^{- i \lambda -\varrho} dP_{\alpha+r, \beta+s}(\mf r, \theta),
\end{equation}
where  $\varrho = \alpha+r+\beta+s+1$ and $d P_{\alpha+r, \beta+s}(\mf r, \theta)$ is a probability measure \cite{Helgason87} on $[0,1]\times [0, \pi]$.
\begin{Lemma}
\label{lem:Jacobi at zero}
For all $\lambda \in \C$ the Jacobi function $\varphi_\lambda^{\alpha+r, \beta+s}$ satisfies the following:
\begin{align}
\label{Jacobi at zero-1}
\varphi_\lambda^{\alpha+r, \beta+s}(0)&= 1\\
\label{Jacobi at zero-2}
\lt|\lt(\f{d}{d \lambda} \rt)^k\varphi_\lambda^{\alpha+r, \beta+s}(t) \rt| &\leq c t^k e^{(|\Im{\lambda}|+\varrho)t}, ~~~t\in \R^+, ~k \in \Z^+.
\end{align}
 \end{Lemma}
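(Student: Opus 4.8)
The plan is to extract both claims directly from the integral representation \eqref{eq:integral representation of Jacobi funct}, exploiting the fact that $dP_{\alpha+r,\beta+s}$ is a probability measure on $[0,1]\times[0,\pi]$. First I would establish \eqref{Jacobi at zero-1}: setting $t=0$ in \eqref{eq:integral representation of Jacobi funct} gives $|\cosh 0 - \mf r e^{i\theta}\sinh 0| = |1 - 0| = 1$, so the integrand is identically $1$ regardless of $\lambda$, and since $dP_{\alpha+r,\beta+s}$ has total mass $1$ we get $\varphi_\lambda^{\alpha+r,\beta+s}(0)=1$. This is immediate and needs no estimate.

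For \eqref{Jacobi at zero-2} the idea is to differentiate under the integral sign in $\lambda$. Writing $A(\mf r,\theta,t) = |\cosh t - \mf r e^{i\theta}\sinh t|$, the integrand is $A^{-i\lambda-\varrho} = A^{-\varrho}\, e^{-i\lambda \log A}$, so $\bigl(\f{d}{d\lambda}\bigr)^k$ of it equals $(-i\log A)^k A^{-i\lambda-\varrho}$. Hence
\[
\lt(\f{d}{d\lambda}\rt)^k \varphi_\lambda^{\alpha+r,\beta+s}(t) = \int_0^1\int_0^\pi (-i\log A)^k\, A^{-\varrho}\, A^{-i\lambda}\, dP_{\alpha+r,\beta+s}(\mf r,\theta),
\]
and taking absolute values inside (justified by the boundedness of the integrand, which then also justifies the differentiation under the integral) gives
\[
\lt|\lt(\f{d}{d\lambda}\rt)^k \varphi_\lambda^{\alpha+r,\beta+s}(t)\rt| \leq \int_0^1\int_0^\pi |\log A|^k\, A^{-\varrho}\, A^{\Im\lambda}\, dP(\mf r,\theta).
\]
The two things left to check are the two-sided bound $e^{-t} \leq A(\mf r,\theta,t) \leq e^{t}$ uniformly in $(\mf r,\theta)\in[0,1]\times[0,\pi]$, and the bound $|\log A| \leq c\, t$. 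For the former, write $A^2 = \cosh^2 t - 2\mf r\cos\theta \cosh t\sinh t + \mf r^2 \sinh^2 t$; since $|\mf r|\le 1$ this is squeezed between $(\cosh t - \sinh t)^2 = e^{-2t}$ and $(\cosh t + \sinh t)^2 = e^{2t}$, giving $e^{-t}\le A\le e^{t}$ and therefore $|\log A|\le t$. Feeding these into the displayed inequality, $A^{-\varrho} \le e^{\varrho t}$ (since $\varrho>0$ and $A\ge e^{-t}$), $A^{\Im\lambda} \le e^{|\Im\lambda|\, t}$ (since $A\le e^t$ when $\Im\lambda\ge 0$ and $A\ge e^{-t}$ when $\Im\lambda<0$), and $|\log A|^k \le t^k$; the probability measure contributes a total mass $1$, so the whole integral is bounded by $t^k e^{(|\Im\lambda|+\varrho)t}$, which is \eqref{Jacobi at zero-2} with $c=1$ (and room to spare for a cleaner constant).

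The only mild obstacle is the justification of differentiation under the integral sign: one needs a dominating function for $(-i\log A)^k A^{-i\lambda-\varrho}$ that is $dP$-integrable locally uniformly in $\lambda$, but the uniform estimate $|\log A|^k A^{-\varrho} A^{\Im\lambda} \le t^k e^{(|\Im\lambda|+\varrho)t}$ (for $t$ in a bounded set and $\lambda$ in a bounded set) is itself a constant, hence trivially integrable against the probability measure $dP$, so the interchange is legitimate by the standard Leibniz rule for parameter-dependent integrals. Thus no analytic subtlety beyond the elementary hyperbolic inequality for $A$ actually arises.
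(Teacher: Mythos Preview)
Your proof is correct and follows exactly the approach the paper sketches: both parts are read off from the integral representation \eqref{eq:integral representation of Jacobi funct}, with the key input being the estimate $\log|\cosh t - \mf r e^{i\theta}\sinh t| \le c\,t$. In fact you supply more than the paper does, proving the two-sided bound $e^{-t}\le A\le e^{t}$ (hence $|\log A|\le t$, i.e.\ $c=1$), which is what is actually needed to control both $A^{-\varrho}$ and $A^{\Im\lambda}$; the paper's one-line proof only records the upper bound on $\log A$.
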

 \begin{proof}
These two properties follow  from the integral representation (\ref{eq:integral representation of Jacobi funct}). We use the estimate
$$\log{|\cosh{t} - r e^{i \theta} \sinh{t}|} \leq c t$$ for all $t>0$
to get the inequality (\ref{Jacobi at zero-2}).
 \end{proof}

Our next Lemma concerns the domain on which the function $h^\delta$ is holomorphic.
\begin{Lemma}
\label{Lem:domain of holomorphicity of h-delta }
For each $\delta \in \what{K}_M$, the functions $\lambda \mapsto h^\delta(\lambda)$ and $\lambda \mapsto Q_\delta(1 -i \lambda)^{-1} h^\delta(\lambda) = g^\delta(\lambda)$  are holomorphic in the interior of the complex strip $\mf a^*_\e$.
 \end{Lemma}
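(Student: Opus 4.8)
The strategy is to invert the structural identity of Lemma \ref{structure Lemma} at one judiciously chosen point of $X$ and then to read off $g^\delta$, and hence $h^\delta$, as a quotient of holomorphic functions with locally non-vanishing denominator.

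First I would specialize (\ref{structural form-2}) to $k=e$ and $x=a_t.0$. Since $\Phi_{\lambda, \delta}(a_t)v_1=\varphi_{\lambda, \delta}(t)v_1$ by (\ref{eq:Phi as a multiplication operator}), we have $\langle\Phi_{\lambda, \delta}(a_t.0)v_1,v_1\rangle=\varphi_{\lambda, \delta}(t)$, so formula (\ref{V}) from the proof of Lemma \ref{structure Lemma} with $\imath=1$ collapses to
\[
\bigl(f^\delta_\lambda(a_t.0)\bigr)_{1\ell}=\sqrt{d_\delta}\,\varphi_{\lambda, \delta}(t)\,h^\delta_\ell(\lambda),\qquad 1\le\ell\le d_\delta,
\]
valid for every $\lambda\in\mf a^*_\e$. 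Dividing by $Q_\delta(1-i\lambda)$, using $Q_\delta(1-i\lambda)^{-1}h^\delta=g^\delta$, and invoking the factorization $\varphi_{\lambda, \delta}(t)=Q_\delta(1+i\lambda)\,u(\lambda,t)$ furnished by (\ref{Jacobi function}), where $u(\lambda,t)=(\alpha+1)^{-1}_r(\sinh t)^r(\cosh t)^s\,\varphi_\lambda^{\alpha+r, \beta+s}(t)$, I obtain
\[
\bigl(Q_\delta(1-i\lambda)^{-1}f^\delta_\lambda(a_t.0)\bigr)_{1\ell}=\sqrt{d_\delta}\,Q_\delta(1+i\lambda)\,u(\lambda,t)\,g^\delta_\ell(\lambda).
\]

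Next I would fix $\lambda_0\in Int \mf a^*_\e$ and select a good value of $t$. For each fixed $t>0$ the map $\lambda\mapsto u(\lambda,t)$ is entire, while for fixed $\lambda$ the function $t\mapsto\varphi_\lambda^{\alpha+r, \beta+s}(t)$ is continuous with $\varphi_\lambda^{\alpha+r, \beta+s}(0)=1$ by (\ref{Jacobi at zero-1}) and $(\sinh t)^r(\cosh t)^s>0$ for $t>0$; hence $u(\lambda_0,t_0)\ne0$ once $t_0>0$ is chosen small enough, and therefore, by holomorphy of $u(\cdot,t_0)$, $u(\lambda,t_0)\ne0$ on some open neighbourhood $U\subseteq Int \mf a^*_\e$ of $\lambda_0$. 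Since moreover $Q_\delta(1+i\lambda)$ has no zero anywhere in $Int \mf a^*_\e$ by Helgason's theorem quoted immediately after (\ref{Structure of the Kostant's polynomial}), on $U$ we may write
\[
g^\delta_\ell(\lambda)=\frac{\bigl(Q_\delta(1-i\lambda)^{-1}f^\delta_\lambda(a_{t_0}.0)\bigr)_{1\ell}}{\sqrt{d_\delta}\,Q_\delta(1+i\lambda)\,u(\lambda,t_0)},
\]
which is holomorphic on $U$, the numerator being holomorphic on $Int \mf a^*_\e$ by part (ii) of Definition \ref{def:image space under spectral projn} (cf. Remark \ref{rem:ch2:zeros of P lambda f x}) and the denominator a non-vanishing holomorphic function there. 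As $\lambda_0$ was arbitrary, $g^\delta$ is holomorphic on all of $Int \mf a^*_\e$, and then $h^\delta=Q_\delta(1-i\lambda)\,g^\delta$ is holomorphic there as well, being the product of $g^\delta$ with a polynomial.

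The only genuinely delicate step is the choice of $t_0$. Since a Jacobi function $\varphi_\lambda^{\alpha+r, \beta+s}(t)$ may vanish for various pairs $(\lambda,t)$, there is in general no single value of $t$ at which $\varphi_{\lambda, \delta}(t)$ is non-vanishing in $\lambda$ uniformly over $Int \mf a^*_\e$, which is exactly why the argument must be run locally in $\lambda$ with $t_0$ depending on $\lambda_0$ and pushed toward $0$, where the normalization $\varphi_\lambda^{\alpha+r, \beta+s}(0)=1$ restores non-vanishing; this is the role of Lemma \ref{lem:Jacobi at zero}. Everything else — the collapse of (\ref{structural form-2}), the factorization (\ref{Jacobi function}), and the zero-free property of $Q_\delta(1+i\lambda)$ — is quoted directly from what precedes.
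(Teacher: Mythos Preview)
Your proof is correct and shares the paper's core strategy: restrict the structural identity to $a_t$, factor $\Phi_{\lambda,\delta\,1}(a_t)$ through (\ref{Jacobi function}), and invoke $\varphi_\lambda^{\alpha+r,\beta+s}(0)=1$ from Lemma~\ref{lem:Jacobi at zero} to choose a $t_0$ where the Jacobi factor is nonzero, so that the coefficient function appears as a quotient of holomorphic functions.

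The organization differs in one respect. You divide by $Q_\delta(1-i\lambda)$ at the outset (using Definition~\ref{def:image space under spectral projn}(ii) for the numerator) and invoke Helgason's zero-free statement for $Q_\delta(1+i\lambda)$ on $Int\,\mf a^*_\e$ to clear the denominator, thereby obtaining $g^\delta$ directly; $h^\delta=Q_\delta(1-i\lambda)g^\delta$ then follows for free. The paper instead first divides by $Q_\delta(1+i\lambda)$---justifying this via the evenness of $f^\delta_\lambda$, which transfers the assumed zeros at the roots of $Q_\delta(1-i\lambda)$ to those of $Q_\delta(1+i\lambda)$---and so obtains $h^\delta$ first; it then argues separately for $g^\delta$ by observing that $Q_\delta(1+i\lambda)$ and $Q_\delta(1-i\lambda)$ have no common zeros. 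Your route is more economical, but it leans on the quoted zero-free property of $Q_\delta(1+i\lambda)$ in the strip, an ingredient the paper's own proof deliberately avoids by working through the evenness argument instead.
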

 \begin{proof}
 We note that the zeros of the polynomial $Q_\delta(1 -i \lambda)$ are purely imaginary. We have assumed that $x \mapsto f_\lambda^\delta(x)$ is an identically zero function on $X$ for all $\lambda$ which are zeros of the polynomial $Q_\delta(1 -i \lambda)$. Also we have assumed that $f_\lambda^\delta$ is even in $\lambda$. So, $x \mapsto f_\lambda^\delta(x)$ is also zero for the zeros of $Q_\delta(1 + i \lambda)$ in  $Int \mf a^*_\e$. Hence, $\lambda \mapsto Q_\delta(1 +i\lambda )^{-1} f_\lambda^\delta(\cdot)$ is holomorphic on $Int \mf a^*_\e$.\\
 We restrict the function $f_\lambda^\delta(\cdot)$ to $(\mf a^*_\e \times A^+)$. Then by the structural form obtained in Lemma \ref{structure Lemma} we write:
 \begin{equation}
 \label{eq:yyy}
 f_\lambda^\delta(a_t) = \sqrt{d_\delta}~ \Phi_{\lambda, \delta}(a_t) h^\delta(\lambda),\hspace{.3in} (t>0).
 \end{equation}
 For proving  $\lambda \mapsto h^\delta(\lambda)$ is holomorphic on $Int \mf a^*_\e$ it is enough to prove that each of its matrix entries is so. By definition the $(1,j)$th ($1\leq j \leq d_\delta$) matrix entry of $f_\lambda^\delta(a_t)$ is given by $f_\lambda^\delta(a_t)_{1j}= \sqrt{d_\delta} \Phi_{\lambda, \delta~1}(a_t) h^\delta(\lambda)_j$.
 \begin{align}
 \label{ali:uuu}
 \Phi_{\lambda, \delta~1}(a_t) = \langle \Phi_{\lambda, \delta}(a_t) v_1, v_1 \rangle
 = \|v_1\| \varphi_{\lambda, \delta}(t).
 \end{align}
 Hence by (\ref{ali:uuu}) and the expression (\ref{Jacobi function}) we get:
 \begin{align}
 f_\lambda^\delta(a_t)_{1j}= \|v_1\| (\alpha+1)_r^{-1} Q_\delta(1+ i \lambda) (\sinh{t})^r (\cosh{t})^s \varphi_{\lambda}^{\alpha+r, \beta+s}(t) h_j^\delta(\lambda).
 \end{align}
 As the first order zeros of $Q_\delta(1+ i \lambda)$ are neutralized by that of $f_\lambda^\delta(a_t)_{1j}$, so we write:
 \begin{align}
 \label{ali:vvv}
 Q_\delta(1+ i \lambda)^{-1} f_\lambda^\delta(a_t)_{1j} = C_{v_1,\alpha} (\sinh{t})^r (\cosh{t})^s \varphi_{\lambda}^{\alpha+r, \beta+s}(t) h_j^\delta(\lambda).
 \end{align}
  The left hand side of (\ref{ali:vvv}) is holomorphic on $Int \mf a^*_\e$. To conclude that $h_j^\delta$ is  holomorphic
at $\lambda \in Int \mf a^*_\e$, we can choose $t_0>0$ so that $\varphi_{\lambda}^{\alpha+r, \beta+s}(t_0) \neq 0$ as is possible by the observation (\ref{Jacobi at zero-1}). Noting that $\varphi_\lambda^{\alpha+r, \beta+s}(t_0)$ is holomorphic in $\lambda$ and that both $\sinh{t_0}$ and $\cosh{t_0}$ are positive we reach our conclusion. To see that $\lambda \mapsto Q_\delta(1 - i \lambda)^{-1} h^\delta(\lambda)$ is holomorphic on $Int \mf a^*_\e$, we note that $f^\delta_\lambda(a_t)_{1j}$ is symmetric in $\lambda$ and so from (\ref{ali:vvv}) $Q_\delta(1 +i \lambda)^{-1}f^\delta_\lambda(a_t)_{1j} $  as well as $Q_\delta(1 -i \lambda)^{-1}f^\delta_\lambda(a_t)_{1j}$ are analytic in $Int \mf a^*_\e$. From the exact expression (\ref{Structure of the Kostant's polynomial}) of $ Q_\delta(1 - i \lambda)$  we further notice that the polynomials $Q_\delta(1 +i \lambda)$ and $Q_\delta(1 - i\lambda)$ have no common zeros. We can hence conclude that $[Q_\delta(1 - i\lambda)Q_\delta(1 + i\lambda)]^{-1} f^\delta_\lambda(a_t)_{1j}$ is analytic on $Int \mf a^*_\e$. Using (\ref{ali:vvv})
again we get the desired analyticity of $Q_\delta(1 - i \lambda)^{-1} h^\delta(\lambda)$
 in $Int \mf a^*_\e$.
\end{proof}
\begin{Remark}
\label{rem:continuous extension of h delta}
 For each $x \in X$ and $\delta \in \what{K}_M$; $\lambda \mapsto f_\lambda^\delta(x)$ extends as a continuous function to the closed strip $\mf a^*_\e$. From (\ref{structural form-2}) it follows that $\lambda \mapsto h^\delta(\lambda)$ also extends as a continuous function to $\mf a^*_\e$.
  \end{Remark}
 Our next aim is to  determine the decay of the function $h^\delta$, for that we need a lower bound of the associated Jacobi function.
\begin{Lemma}
 \label{Lemma of Bray}
(Bray \cite[Lemma 2.4]{Bray96})\\
Let $\mu , \tau \geq -\frac{1}{2} $, then for any $\Lambda > \frac{2}{\pi}$,
there is a constant $C$ depending on $\mu, \tau, \Lambda$ such that
\be
\label{Jakobi lower estimate}
\bigg|\varphi_\lambda^{\mu, \tau}\bigg(\frac{1}{|\lambda|^2}\bigg)\bigg|
\geq C_{\Lambda,\mu,\tau}, ~~\mbox{ for $|\lambda| > \Lambda$}
\ee
In fact the constant $C_{\Lambda,\mu,\tau}$ has the following form:
$$C_{\Lambda,\mu,\tau} = e^{-{{2 + \mu+ \tau} \over {\Lambda}}} ~\cos{\left( 1 / \Lambda \right)}.$$
\end{Lemma}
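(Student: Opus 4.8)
The plan is to read the lower bound directly off the integral representation (\ref{eq:integral representation of Jacobi funct}), with $(\mu,\tau)$ playing the role of $(\alpha+r,\beta+s)$ so that $\varrho=\mu+\tau+1\ge 0$; the whole point is that the evaluation node $t=1/|\lambda|^2$ is calibrated so that the integrand stays uniformly close, in modulus \emph{and} in argument, to a fixed positive number, so that no cancellation can occur in the integral. First I would record an elementary two-sided bound on the integrand's base: writing $w=w(\mf r,\theta,t)=\cosh t-\mf r e^{i\theta}\sinh t$, since $\mf r\in[0,1]$ the triangle inequality gives $e^{-t}=\cosh t-\sinh t\le |w|\le\cosh t+\sinh t=e^{t}$, so $|\log|w||\le t$ uniformly in $(\mf r,\theta)$ --- this is the sharp form of the estimate already used in the proof of Lemma \ref{lem:Jacobi at zero}.

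Next I would pass to real parts. With $\lambda=\xi+i\eta$ one has $|w|^{-i\lambda-\varrho}=|w|^{\eta-\varrho}e^{-i\xi\log|w|}$, whose real part $|w|^{\eta-\varrho}\cos(\xi\log|w|)$ is then integrated against the \emph{probability} measure $dP_{\mu,\tau}$. Imposing $|\lambda|>\Lambda>2/\pi$ and $t=1/|\lambda|^2$, one gets $|\xi\log|w||\le|\lambda|\,t=1/|\lambda|<1/\Lambda<\pi/2$, hence $\cos(\xi\log|w|)\ge\cos(1/\Lambda)>0$ uniformly in $(\mf r,\theta)$; and since $\varrho\ge 0$, the factor $|w|^{\eta-\varrho}\ge e^{-(|\eta|+\varrho)t}$ is bounded below uniformly as well --- namely $|w|^{-\varrho}\ge e^{-\varrho t}\ge e^{-\varrho/\Lambda^2}$ when $\lambda$ is real, and bounded below by a strip-dependent constant on any bounded horizontal strip. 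Integrating against the probability measure,
\[
\bigl|\varphi_\lambda^{\mu,\tau}(1/|\lambda|^2)\bigr|\ \ge\ \Re\,\varphi_\lambda^{\mu,\tau}(1/|\lambda|^2)\ =\ \int_0^1\!\!\int_0^\pi |w|^{\eta-\varrho}\cos(\xi\log|w|)\,dP_{\mu,\tau}(\mf r,\theta)\ \ge\ C_{\Lambda,\mu,\tau}\ >\ 0,
\]
and carrying the two elementary bounds above through this integral produces an explicit admissible value of $C_{\Lambda,\mu,\tau}$ of the exponential-times-cosine shape recorded in the statement.

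I do not expect a genuine obstacle here: everything rests on the two-sided bound $e^{-t}\le|w|\le e^{t}$ and the positivity of $dP_{\mu,\tau}$. The one thing that has to be noticed is \emph{why} one evaluates at $1/|\lambda|^2$: this is precisely the scaling that forces the phase $\xi\log|w|=O(|\lambda|\cdot|\lambda|^{-2})=O(|\lambda|^{-1})$ to tend to $0$ uniformly over the support of the measure, so that the oscillatory factor $e^{-i\lambda\log|w|}$ cannot destroy the integral; had one taken, say, $t=1/|\lambda|$, the phase would only be $O(1)$ and this argument would break down.
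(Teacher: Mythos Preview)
The paper does not supply its own proof of this lemma; it simply quotes the result from Bray \cite[Lemma~2.4]{Bray96}. Your argument is correct and is, in essence, Bray's original proof: the integral representation \eqref{eq:integral representation of Jacobi funct} against the probability measure $dP_{\mu,\tau}$, the two-sided bound $e^{-t}\le|w|\le e^{t}$ (which you verify more carefully than the paper does in the proof of Lemma~\ref{lem:Jacobi at zero}), and the observation that at $t=1/|\lambda|^2$ the phase $\xi\log|w|$ is forced into $(-\pi/2,\pi/2)$ so that the real part of the integrand is uniformly positive. Your closing remark explaining why $1/|\lambda|^2$ rather than $1/|\lambda|$ is the correct calibration is a nice addition that the cited source leaves implicit.

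One minor point: the explicit constant you obtain, $e^{-(|\eta|+\varrho)/|\lambda|^2}\cos(1/\Lambda)$ (or $e^{-\varrho/\Lambda^2}\cos(1/\Lambda)$ for real $\lambda$), does not literally coincide with the form $e^{-(2+\mu+\tau)/\Lambda}\cos(1/\Lambda)$ recorded in the statement; the two differ in whether the amplitude exponent carries $1/\Lambda$ or $1/\Lambda^2$. This is harmless for every use of the lemma in the paper---only positivity of the lower bound matters---and your phrasing ``of the exponential-times-cosine shape'' already signals you are aware of this.
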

\begin{Remark}
\label{bray for SU(n,1) }
We note that for $G= SU(n, 1)$, the quantities  $\alpha \geq 0$,  $\beta=0$ (where $\alpha$, $\beta$ are as in (\ref{Structure of the Kostant's polynomial})) and the parameterization $(r, s)$ of $\what{K}_M$ runs over $\Z^+ \times \Z$ with $r \pm s \in 2\Z^+$. Suppose for some $\delta \in \what{K}_M$, $s_\delta < 0$. In such a case we use the relation \cite[(5.75)]{Koorn84}
\begin{equation}
\varphi_{\lambda}^{\alpha+ r_\delta, s_\delta}(t)= (\cosh{t})^{2|s_\delta|} \varphi_{\lambda}^{\alpha+r_\delta, |s_\delta|}(t), ~~ t>0, \lambda \in \C,
\end{equation}
and rewrite (\ref{Jacobi function}) as follows
\begin{equation}
\varphi_{\lambda, \delta}(t) = Q_\delta(i \lambda+1) (\alpha +1)^{-1}_{r_\delta}
(\sinh t)^{r_\delta} (\cosh t)^{|s_\delta|} \varphi_\lambda^{\alpha+r_\delta, |s_\delta|} (t).
\end{equation}
The Jacobi function $\varphi_\lambda^{\alpha+r_\delta, |s_\delta|} (t)$ clearly satisfies the conditions of Lemma \ref{Lemma of Bray}. For other classes of real rank one groups the parameters $\alpha+r$ and $\beta+s$ are positive integers. Thus, for all $G$ of real rank-one, the condition of Lemma \ref{Lemma of Bray} holds for the function $\varphi_{\lambda}^{\alpha+r, \beta+s}$.
\end{Remark}
\begin{Proposition}\label{Imp-pro}
For each $\delta \in \Gamma(f)$ the function $\lambda \mapsto h_i^\delta (\lambda)$
for each $i= 1, 2, \cdots, d_\delta$ satisfies the following decay condition:
\be\label{Conclusion}
\sup_{\lambda \in \mathfrak a^*_\e} \bigg| \bigg(\frac{d}
{d \lambda} \bigg)^n h_i^\delta(\lambda) \bigg|~(1+|\lambda|)^m < +\infty.
\ee
\end{Proposition}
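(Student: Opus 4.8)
Fix $\delta\in\Gamma(f)$ and $1\le i\le d_\delta$ (for $\delta\notin\Gamma(f)$ one has $h^\delta\equiv0$). The plan is to invert the structural form of $f^\delta_\lambda$ from Lemma~\ref{structure Lemma} at a $\lambda$-dependent height $t$, and to feed in the decay (\ref{decay of operator valued projection}). Combining Lemma~\ref{structure Lemma} (or rather its entrywise form, cf. the computations in the proof of Lemma~\ref{Lem:domain of holomorphicity of h-delta }), the relation $\Phi_{\lambda,\delta}(a_t)v_1=\varphi_{\lambda,\delta}(t)v_1$ of (\ref{eq:Phi as a multiplication operator}), and the Jacobi-function expression (\ref{Jacobi function}), one gets, for every $t>0$ and every $\lambda\in\mf a^*_\e$, the structural identity
\[
f^\delta_\lambda(a_t)_{1i}=c_\delta\,Q_\delta(1+i\lambda)\,(\sinh t)^r(\cosh t)^s\,\varphi_\lambda^{\alpha+r,\beta+s}(t)\,h^\delta_i(\lambda),
\]
with $c_\delta>0$ a ($\delta$-dependent) constant; for the classes $\delta$ of $SU(n,1)$ with $s_\delta<0$ one first passes to the form in Remark~\ref{bray for SU(n,1) }, so that throughout $r,s\ge0$ and the Jacobi parameters are $\ge-\tfrac12$. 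Since $f^\delta_\lambda(a_t)_{1i}$ is a single matrix entry, $\bigl|(\tfrac{d}{d\lambda})^mf^\delta_\lambda(a_t)_{1i}\bigr|\le\bigl\|(\tfrac{d}{d\lambda})^mf^\delta_\lambda(a_t)\bigr\|_{\mathbf{2}}$, and, using $|a_t|=t$ and $e^{-t}\le\varphi_0(a_t)\le1$, (\ref{decay of operator valued projection}) gives, for all $m,n,s'\ge0$, all $t>0$ and all $\lambda\in Int\mf a^*_\e$,
\[
\Bigl|\bigl(\tfrac{d}{d\lambda}\bigr)^mf^\delta_\lambda(a_t)_{1i}\Bigr|\le C\,(1+t)^{-n}(1+|\lambda|)^{-s'}\,\varphi_0^{r_\e}(a_t).
\]

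First I would do the case $n=0$. Evaluate the structural identity at $t=t(\lambda):=1/|\lambda|^2$. There $(\sinh t(\lambda))^r\ge t(\lambda)^r=|\lambda|^{-2r}$, $(\cosh t(\lambda))^s\ge1$, and $|Q_\delta(1+i\lambda)|$ is bounded below once $|\lambda|>\Lambda'$ (it has no zeros in $Int\mf a^*_\e$ and none at all far out); by Bray's Lemma~\ref{Lemma of Bray} — applicable to $\varphi_\lambda^{\alpha+r,\beta+s}$ by Remark~\ref{bray for SU(n,1) } — one has $\bigl|\varphi_\lambda^{\alpha+r,\beta+s}(t(\lambda))\bigr|\ge C_{\Lambda'}>0$ for $|\lambda|>\Lambda'$, $\Lambda'\ge1$ fixed suitably large. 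Since also $\varphi_0^{r_\e}(a_{t(\lambda)})\le e^{|r_\e|}$ and $(1+t(\lambda))^{-n}\le1$, the estimate above with $s'$ taken arbitrarily large yields $|h^\delta_i(\lambda)|\le C\,|\lambda|^{2r}(1+|\lambda|)^{-s'}$ for $|\lambda|>\Lambda'$, i.e.\ decay faster than any power of $(1+|\lambda|)$.

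For $n\ge1$ the subtlety — and the part I expect to be the main obstacle — is that the ``inverted'' formula expressing $h^\delta_i(\lambda)$ as $f^\delta_\lambda(a_{t(\lambda)})_{1i}$ divided by $c_\delta Q_\delta(1+i\lambda)(\sinh t(\lambda))^r(\cosh t(\lambda))^s\varphi_\lambda^{\alpha+r,\beta+s}(t(\lambda))$ cannot be differentiated in $\lambda$ directly, because $t(\lambda)$ depends on $\lambda$. The device is: keep $t$ free, differentiate the structural identity $n$ times in $\lambda$ by the Leibniz rule — bounding $(\tfrac{d}{d\lambda})^nf^\delta_\lambda(a_t)_{1i}$ by (\ref{decay of operator valued projection}), using that $(\sinh t)^r(\cosh t)^s$ is $\lambda$-free, that $Q_\delta(1+i\lambda)$ is a polynomial, and that $\bigl|(\tfrac{d}{d\lambda})^k\varphi_\lambda^{\alpha+r,\beta+s}(t)\bigr|\le c\,t^ke^{(\e+\varrho)t}$ by (\ref{Jacobi at zero-2}) — then solve the resulting linear relation for $(\tfrac{d}{d\lambda})^nh^\delta_i(\lambda)$, a quantity that no longer involves $t$, and only now substitute $t=t(\lambda)=1/|\lambda|^2$. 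In the resulting expression the quotients $(\tfrac{d}{d\lambda})^jQ_\delta(1+i\lambda)/Q_\delta(1+i\lambda)$ are $O((1+|\lambda|)^{\deg Q_\delta})$ and the quotients $(\tfrac{d}{d\lambda})^j\varphi_\lambda^{\alpha+r,\beta+s}(t(\lambda))/\varphi_\lambda^{\alpha+r,\beta+s}(t(\lambda))$ are $O(1)$ for $|\lambda|>\Lambda'$ (the $|\lambda|^{-2r}$ from $(\sinh t)^r$ cancels between numerator and denominator), the term carrying $(\tfrac{d}{d\lambda})^nf^\delta_\lambda(a_{t(\lambda)})_{1i}$ decays faster than any power of $(1+|\lambda|)$, and the remaining terms carry $(\tfrac{d}{d\lambda})^kh^\delta_i$ with $k<n$ multiplied only by polynomially bounded factors. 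An induction on $n$ then gives $\sup_{|\lambda|>\Lambda'}\bigl|(\tfrac{d}{d\lambda})^nh^\delta_i(\lambda)\bigr|(1+|\lambda|)^m<\infty$ for every $m$. On the bounded remainder $\{|\lambda|\le\Lambda'\}$ the weight $(1+|\lambda|)^m$ is harmless, and boundedness of $(\tfrac{d}{d\lambda})^nh^\delta_i$ there — holomorphic on $Int\mf a^*_\e$ by Lemma~\ref{Lem:domain of holomorphicity of h-delta } — follows by representing $h^\delta_i$ through the structural identity at a fixed small $t_0>0$ at which $\varphi_\lambda^{\alpha+r,\beta+s}(t_0)$ does not vanish on the relevant compact set (possible since $\varphi_\lambda^{\alpha+r,\beta+s}(0)=1$ and this is continuous in $(t,\lambda)$) together with the absence of interior zeros of $Q_\delta(1+i\lambda)$, the uniform decay in $\lambda$ of $(\tfrac{d}{d\lambda})^nf^\delta_\lambda(a_{t_0})$ on $Int\mf a^*_\e$ furnished by (\ref{decay of operator valued projection}), and Remark~\ref{rem:continuous extension of h delta}; the case $n=0$ extends to the closed strip by the same continuity. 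Thus the genuinely delicate ingredient is this differentiate-at-fixed-$t$ / invert / then-set-$t=1/|\lambda|^2$ procedure, the two-sided control of the Jacobi factor from Lemmas~\ref{Lemma of Bray} and~\ref{lem:Jacobi at zero} being exactly what makes the induction close.
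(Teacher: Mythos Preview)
Your proposal is correct and follows essentially the same route as the paper: differentiate the structural identity at a \emph{fixed} $t$, solve for $(\tfrac{d}{d\lambda})^n h^\delta_i(\lambda)$, and only then substitute $t=1/|\lambda|^2$, using Bray's Lemma~\ref{Lemma of Bray} for the lower bound on the Jacobi factor and (\ref{Jacobi at zero-2}) for the upper bound on its $\lambda$-derivatives, with an induction on $n$. The only cosmetic difference is that the paper first passes to $g^\delta(\lambda)=Q_\delta(1-i\lambda)^{-1}h^\delta(\lambda)$ so that the combined polynomial $Q_\delta(1-i\lambda)Q_\delta(1+i\lambda)$ exactly balances the $(\sinh(1/|\lambda|^2))^r$ factor, whereas you work directly with $h^\delta_i$ and absorb the leftover power of $|\lambda|$ by enlarging $s'$; both are equivalent since $s'$ is arbitrary.
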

\begin{proof}
The structural form  obtained in Lemma \ref{structure Lemma} and
(\ref{decay of operator valued projection})
gives the following decay/growth  condition for each $(1,j)$th matrix entry of $f_\lambda^\delta(a_t)$: for each $ m, n \in \Z^+ \cup \{0\}$
\begin{equation}
\label{decay of each matrix entry}
\sup_{a_t \in \mathfrak a^+; \lambda \in Int\mathfrak a^*_\e}
\bigg|\Phi_{\lambda, \delta~1}(a_t)~h_j^\delta(\lambda)  \bigg| (1+t)^n (1+|\lambda|)^m
\varphi_0^{-r_\e}(a_t)= c_{1j} < +\infty.
\end{equation}
This immediately implies that: for $\lambda \in Int \mf a^*_\e$ and $t> 0$
\begin{align}
\label{ali:20}
|\Phi_{\lambda, \delta~1}(a_t)| |h_j^\delta(\lambda)| (1 +|\lambda|)^m &\leq  c_{1j} \f{1}{(1+t)^n} \varphi_0^{r_\e}(a_t) \nonumber\\
&\leq c_{1j}(r_\e, t), \mbox{~~~~~where~} c_{1j}(r_\e, t) =\left\{
  \begin{array}{ll}
    c_{1j} & \hbox{if~~} r_\e \geq 0 \\
     c_{1j}e^{|r_\e|t}& \hbox{if~~} r_\e < 0
  \end{array}
  \right.
\end{align}
The last line of the inequality (\ref{ali:20}) is a consequence of the fact that $\varphi_0(a_t) < 1$ for all $t> 0$ and the two-sided estimate (\ref{eq:pri:apraori estimate of phi 0}) of $\varphi_0(a_t)$.\\
Now we express $\Phi_{\lambda, \delta~1}(a_t)$ in terms of the Jacobi function (\ref{Jacobi function}), which reduces (\ref{ali:20}) to the following:
\begin{align}
\label{ali:21}
|h^\delta_j(\lambda)| |\varphi_\lambda^{\alpha+r, \beta+s}(t)| |Q_\delta(i \lambda+1) (\sinh t)^r (\cosh{t})^s| ~(1+|\lambda|)^m \leq \f{1}{\|v_1\| }c_{1j}(r_\e, t).
\end{align}
We note that $h^\delta g^\delta(\lambda) Q_\delta(1 - i \lambda)$, $\lambda \in Int \mf a^*_\e$. Hence we get the inequality.
 \begin{align}
\label{ali:211}
|g^\delta_j(\lambda)| |\varphi_\lambda^{\alpha+r, \beta+s}(t)| |Q_\delta(1 - i \lambda) Q_\delta(i \lambda+1) (\sinh t)^r|& |(\cosh{t})^s| (1+|\lambda|)^m \leq \f{1}{\|v_1\| }c_{1j}(r_\e, t).
\end{align}
We now let $t = \f{1}{|\lambda|^2}$.
We choose one $\Lambda >\f{2}{\pi}$ large enough so that the disk $B^{\Lambda}(0)=\{ \lambda ~|~|\lambda| \leq \Lambda\}$ contains all the zeros of the polynomial $Q_\delta(1 + i \lambda)$ lying in $Int \mf a^*_\e$.
For $\lambda \in Int \mf a^*_\e \setminus B^{\Lambda}(0)$, by Lemma \ref{Lemma of Bray},
\begin{equation}
\lt|\varphi_\lambda^{\alpha+r, \beta+ s}\lt(\f{1}{|\lambda|^2}\rt)\rt| > C_{\Lambda,r,s}>0.
\end{equation}
 We note that the polynomial $Q_\delta(1+i \lambda) Q_\delta(1-i \lambda)$ is of degree $2 r$ (see (\ref{Structure of the Kostant's polynomial})). Hence for all  $\lambda \in Int \mf a^*_\e \setminus B^{\Lambda}(0)$ one can find a positive constant $\mf d$ such that
$ \lt|Q_\delta(1+i \lambda) Q_\delta(1-i \lambda) \lt(\sinh{\f{1}{|\lambda|^2}} \rt)^r \rt| > \mf d$.

 Also, for the above choice of $\lambda$, $e^{|r_\e|\f{1}{|\lambda|^2}} \leq e^{|r_\e|\f{1}{|\Lambda|^2}}$.
Hence from (\ref{ali:211}) we conclude that: for all $\lambda \in Int \mf a^*_\e \setminus B^{\Lambda}(0)$
\begin{equation}
|g^\delta_j(\lambda)| (1+|\lambda|)^m \leq \f{e^{|r_\e|\f{1}{|\Lambda|^2}}}{C_{\Lambda,\alpha+r,\beta+s} \mf d}=c_\delta (\mbox{say}) \mbox{~for each~} m \in \Z^+.
\end{equation}
As $ B^{\Lambda}(0)$ contains all the zeros of the polynomial $Q_\delta(1-i\lambda)$, so there exists a constant $k_\delta >0$ such that $|Q_\delta(1-i \lambda)|>k_\delta$ for all $\lambda \in Int \mf a^*_\e \setminus B^{\Lambda}(0)$. Thus, for $\lambda \in Int \mf a^*_\e \setminus B^{\Lambda}(0)$
\begin{align}
\label{ali:2111}
|h^\delta_j(\lambda)| (1+|\lambda|)^m = \f{1}{|Q_\delta(1-i\lambda)|}|g^\delta_j(\lambda)| (1+|\lambda|)^m
\leq \f{c_\delta}{k_\delta}.
\end{align}

For each $1 \leq j \leq d_\delta$ the following inequality is also obtained from (\ref{decay of operator valued projection}): for all $a_t \in A^+$ and $\lambda \in Int \mathfrak a^*_\e$
\begin{align}
&\hspace{-.5in}\lt| \lt(\frac{d}{d \lambda} \rt) \lt\{
\Phi_{\lambda, \delta~ 1}(a_t)Q_\delta(1-i\lambda)g_j^\delta(\lambda) \rt\} \rt| (1+t)^n (1+|\lambda|)^m
\varphi_0^{-r_\e}(a_t) \leq c'_{1j}. \nonumber\\
&\hspace{-.7in}\mbox{That is}\nonumber\\
& \hspace{-.7in}\lt| \lt\{\frac{d}{d \lambda}  g_j^\delta
(\lambda) \rt\} Q_\delta(1-i\lambda)\Phi_{\lambda, \delta~ 1}(a_t) + g_j^\delta(\lambda)~\lt(\frac{d}{d \lambda} \rt) \lt\{Q_\delta(1-i\lambda)\Phi_{\lambda, \delta~ 1}(a_t) \rt\}\rt|
  (1+|\lambda|)^m \leq c'_{1j}(r_\e, t).\nonumber\\
&\hspace{-.7in}\mbox{The last line can be written as}\nonumber\\
\label{Imp}
&\hspace{-.5in}\lt| \lt\{\frac{d}{d \lambda} g_j^\delta(\lambda) \rt\}
Q_\delta(1-i\lambda)\Phi_{\lambda, \delta~ 1}(a_t) \rt| (1+|\lambda|)^m \nonumber\\
& \hspace{1in}\leq c'_{1j}(r_\e, t)+ \lt| g_j^\delta(\lambda)\hspace{-.05in}\lt(\frac{d}{d \lambda} \rt)
\lt\{Q_\delta(1-i\lambda)\Phi_{\lambda, \delta~ 1}(a_t) \rt\} \rt| (1+|\lambda|)^m ,\nonumber\\
&\hspace{-.7in}\mbox{writing $\Phi_{\lambda, \delta~ 1}(a_t)$ in terms of the Jacobi
functions as  in (\ref{Jacobi function}) we get,}\nonumber\\
& \hspace{-.5in}\leq c'_{1j}(r_\e, t)+\f{(1+|\lambda|)^m}{(1+\alpha)_r} \lt| g_j^\delta(\lambda)\hspace{-.02in}
\lt(\hspace{-.03in}\frac{d}{d \lambda}\hspace{-.03in} \rt) \hspace{-.02in}\lt\{Q_\delta(1-i\lambda)Q_\delta(1+i \lambda)
(\sinh t)^r (\cosh t)^s \varphi_\lambda^{\alpha+r, \beta+ s} (t) \hspace{-.02in}\rt\} \rt|\nonumber\\
&\hspace{-.7in}\leq  c'_{1j}(r_\e, t)+ \f{(1+|\lambda|)^m}{(1+\alpha)_r}\lt| g_j^\delta(\lambda)
\lt\{ \hspace{-.04in}\lt(\hspace{-.03in}\frac{d}{d \lambda} \hspace{-.03in}\rt) Q_\delta(1-i\lambda)Q_\delta(1+i \lambda)\hspace{-.03in} \rt\}
(\sinh t)^r (\cosh t)^s \varphi_\lambda^{\alpha+r,\beta+ s} (t)  \rt|+
 \nonumber\\
&\hspace{.4in}\f{(1+|\lambda|)^m}{(1+\alpha)_r}\hspace{-.03in}\lt| g_j^\delta(\lambda)Q_\delta(1-i\lambda)Q_\delta(1+i \lambda)
(\sinh t)^r (\cosh t)^s \hspace{-.03in}\lt\{ \hspace{-.03in}\lt(\hspace{-.03in}\frac{d}{d \lambda} \hspace{-.03in}\rt) \hspace{-.02in}\varphi_\lambda^{\alpha+r,
\beta+s}(t)\hspace{-.03in}\rt\} \hspace{-.02in}\rt|.
\end{align}
We rewrite the inequality as
\begin{align}
\lt|\lt(\hspace{-.03in}\frac{d}{d \lambda}\hspace{-.03in} \rt) g_j^\delta(\lambda)  \rt|
(1+ |\lambda|)^m & <  c'_{1j}(r_\e, t) \lt[ |Q_\delta(1+i \lambda)Q_\delta(1-i\lambda)
\lt(\sinh{t}\rt)^r| \lt(\cosh{t}\rt)^s |\varphi_\lambda^{\alpha+ r, \beta+ s}(t)| \rt]^{-1}\nonumber \\
&\hspace{1.5in}+ |g_j^\delta(\lambda)| (1+ |\lambda|)^m
 \frac{|\f{d}{d \lambda}(Q_\delta(1-i\lambda)Q_\delta(1+ i \lambda)|}{|Q_\delta(1+i \lambda)Q_\delta(1-i\lambda)|}
 \nonumber\\
&\hspace{1.8in}+ |g_j^\delta(\lambda)| (1+ |\lambda|)^m \frac{\lt|\left(\frac{d}
{d \lambda} \right) \varphi_\lambda^{\alpha + r, \beta+s}\left(t\right)\rt|}{\lt| \varphi_\lambda^{\alpha + r, \beta+s}
\left( t\right)\rt|}.
\end{align}
Again we take $t = \f{1}{|\lambda|^2}$ and choose $\Lambda> \f{2}{\pi}$ suitably large so that $B^\Lambda(0)$ contains all the zeros of the polynomial $Q_\delta(1-i\lambda)$.
We will presently obtain bounds for the three terms on the right-hand side separately for $\lambda \in Int \mf a^*_\e \setminus B^\Lambda(0)$.
 The first term is bounded by a constant, as seen earlier. We have already obtained a bound for the factor $|g_j^\delta(\lambda)| (1+ |\lambda|)^m$ present in the last two terms. The quotient in the second term is clearly bounded in the region $Int \mf a^*_\e \setminus B^\Lambda(0)$. Finally, a bound for the quotient in the last term is obtained from the following facts   $|\varphi_\lambda^{\alpha+r, \beta+s}(\f{1}{|\lambda|^2})| \geq C_{\Lambda, \alpha+r, \beta+s}$ ( by Lemma \ref{Lemma of Bray}) and $\lt|\lt(\f{d}{d \lambda}\rt) \varphi_{\lambda}^{\alpha+r, \beta+s}(\f{1}{|\lambda|^2})  \rt| \leq c \f{1}{|\Lambda|^2} e^{(\e+\varrho)(\f{1}{|\Lambda|^2})} $ (by (\ref{Jacobi at zero-1})).

As $h^\delta_j(\lambda) = Q_\delta(1-i\lambda) g_j^\delta(\lambda)$, so for each $m \in \Z^+$
\be
\sup_{\lambda \in Int \mathfrak a^*_\e } \bigg|\bigg(\frac{d}{d
\lambda} \bigg) h_j^\delta(\lambda)  \bigg| ~(1+ |\lambda|)^m \leq
C_\delta < +\infty.
\ee
For any
order of the derivative on $\lambda$, we can use essentially the same argument.
\end{proof}
\begin{Corollary} \label{decay of h-delta}
From the decay (\ref{Conclusion}) of each matrix entry of the function
$h^\delta$ obtained in the above Proposition \ref{Imp-pro} we get: for
each fixed $\delta \in \Gamma(f)$, for each $m , n \in \Z^+ \cup \{0\}$
\be
\sup_{\lambda \in Int \mathfrak a^*_\e} \bigg\| \left( \frac{d}
{d \lambda}\right)^m h^\delta(\lambda) \bigg\|_{\mathbf{2}} ~(1+ |\lambda|)^n
< + \infty.
\ee
\end{Corollary}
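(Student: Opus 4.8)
The plan is to deduce the Hilbert--Schmidt estimate directly from the scalar estimates of Proposition \ref{Imp-pro} by exploiting the very simple shape of $h^\delta$. Recall from Lemma \ref{structure Lemma} that for each fixed $\delta \in \Gamma(f)$ the object $h^\delta(\lambda)$ is a $(1\times d_\delta)$ matrix, i.e. a row vector with entries $h^\delta_1(\lambda), \dots, h^\delta_{d_\delta}(\lambda)$. Since differentiation in $\lambda$ acts entrywise, $\left(\frac{d}{d\lambda}\right)^m h^\delta(\lambda)$ is again a $(1\times d_\delta)$ matrix whose $j$th entry is $\left(\frac{d}{d\lambda}\right)^m h^\delta_j(\lambda)$. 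Hence by the definition of the Hilbert--Schmidt norm
\[
\left\|\left(\frac{d}{d\lambda}\right)^m h^\delta(\lambda)\right\|_{\mathbf{2}}^2 = \sum_{j=1}^{d_\delta}\left|\left(\frac{d}{d\lambda}\right)^m h^\delta_j(\lambda)\right|^2 .
\]

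First I would multiply both sides by $(1+|\lambda|)^{2n}$ and take the supremum over $\lambda \in Int\,\mathfrak a^*_\e$. The right-hand side becomes a finite sum (there are exactly $d_\delta$ terms, $\delta$ being fixed) of quantities of the form $\sup_\lambda \left|\left(\frac{d}{d\lambda}\right)^m h^\delta_j(\lambda)\right|^2 (1+|\lambda|)^{2n}$, each of which is finite by the decay estimate (\ref{Conclusion}) of Proposition \ref{Imp-pro} (applied with derivative order $m$ and polynomial weight $n$). Therefore the supremum of the left-hand side is bounded by $d_\delta \cdot \max_{1\le j\le d_\delta}\left(\sup_\lambda \left|\left(\frac{d}{d\lambda}\right)^m h^\delta_j(\lambda)\right|(1+|\lambda|)^n\right)^2 < +\infty$, and taking square roots gives the assertion.

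There is essentially no obstacle here: the statement is a routine bookkeeping consequence of Proposition \ref{Imp-pro} together with the facts that $h^\delta$ has only $d_\delta$ entries and that the Hilbert--Schmidt norm of a matrix is the $\ell^2$-norm of its entries. The only point worth noting is that the resulting bound is allowed to depend on $\delta$, which is harmless since $\delta \in \Gamma(f)$ is fixed throughout; the uniform-in-$\delta$ control needed later will come separately from the explicit $d_\delta^{3/2}$-type dependence already tracked in (\ref{decay of operator valued projection}).
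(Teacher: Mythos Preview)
Your proposal is correct and is exactly the routine argument the paper has in mind: the corollary is stated without proof precisely because the Hilbert--Schmidt norm of the $(1\times d_\delta)$ row vector $h^\delta(\lambda)$ is just the $\ell^2$-sum of its $d_\delta$ entries, each of which is controlled by Proposition~\ref{Imp-pro}. There is nothing to add.
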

Let us now recollect the properties we have obtained for the function $h^\delta$ ($\delta \in \what{K}_M$) in the following Lemma.
\begin{Lemma}
\label{lem:properties of h delta function}
The function $h^\delta$ (as obtained in (\ref{structural form-2})~)  is a $Hom(V_\delta, V_\delta^M)$ valued function on $\mf a^*_\e$ which satisfies the following properties:
\begin{enumerate}
  \item $h^\delta$ is holomorphic in interior of $\mf a^*_\e$ and it extends to the closed strip $\mf a^*_\e$ as a continuous function.
  \item $\lambda \mapsto Q_\delta(1- i \lambda)^{-1} h^\delta(\lambda)$ is an even function and also it is holomorphic on $Int \mf a^*_\e$.
  \item for each $m , n \in \Z^+ \cup \{0\}$
\be
\sup_{\lambda \in Int \mathfrak a^*_\e} \bigg\| \left( \frac{d}
{d \lambda}\right)^m h^\delta(\lambda) \bigg\|_{\mathbf{2}} ~(1+ |\lambda|)^n
< + \infty
\ee
\end{enumerate}
\end{Lemma}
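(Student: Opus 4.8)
The statement collects, in one place, facts about $h^\delta$ that have already been established along the way, so the plan is simply to assemble them and to check that nothing has been left implicit. First I would recall why $h^\delta$ is $Hom(V_\delta, V_\delta^M)$-valued: by the construction in Lemma \ref{structure Lemma}, $h^\delta(\lambda)$ is the $1 \times d_\delta$ matrix $(h^\delta_1(\lambda), \dots, h^\delta_{d_\delta}(\lambda))$ appearing in the factorization $f^\delta_\lambda(k a_t.0) = \sqrt{d_\delta}\,\Phi_{\lambda,\delta}(k a_t.0)\,h^\delta(\lambda)$; reading this as a linear map $V_\delta \to \C$ and identifying $\C$ with the line $V_\delta^M = \C v_1$ gives the asserted target space. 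One may alternatively argue that $\Phi_{\lambda,\delta}(x)$ already takes values in $Hom(V_\delta, V_\delta^M)$ by Remark \ref{properties of the gen-sp functions}, so the factorization in $(\ref{structural form-2})$ forces the same normalization on $h^\delta$ along the fixed line $\C v_1$.

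For property (i), holomorphicity of $\lambda \mapsto h^\delta(\lambda)$ on $Int\,\mf a^*_\e$ is precisely the first conclusion of Lemma \ref{Lem:domain of holomorphicity of h-delta }, and the continuous extension to the closed strip $\mf a^*_\e$ is Remark \ref{rem:continuous extension of h delta}, which deduces it from the corresponding property of $\lambda \mapsto f^\delta_\lambda(x)$ through the structural form. For property (ii), holomorphicity of $\lambda \mapsto Q_\delta(1 - i\lambda)^{-1} h^\delta(\lambda) = g^\delta(\lambda)$ on $Int\,\mf a^*_\e$ is the second conclusion of Lemma \ref{Lem:domain of holomorphicity of h-delta }, while its evenness is recorded in Remark \ref{properties of h-delta}, where it follows from the evenness of $f^\delta_\lambda$ together with the evenness of $\lambda \mapsto Q_\delta(1 - i\lambda)\Phi_{\lambda,\delta}(x)$ supplied by $(\ref{Intro- of kostant poly1})$. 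Property (iii) is exactly Corollary \ref{decay of h-delta}, the Hilbert--Schmidt (matrix) version of the entrywise decay obtained in Proposition \ref{Imp-pro}, which in turn rests on the lower bound of Lemma \ref{Lemma of Bray} for the Jacobi function evaluated at $t = \f{1}{|\lambda|^2}$.

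Consequently there is no real obstacle at this stage: each of the three items is already available, and the content of the Lemma is only to record that all of them hold for the single function $h^\delta$, for each fixed $\delta \in \Gamma(f)$ (finitely many), with no hidden hypothesis. The one point worth a second look while writing the details is the basis bookkeeping behind the identification of the range $Hom(V_\delta, V_\delta^M)$ and the resulting compatibility of (ii) with (i) and with the Kostant relation $(\ref{Intro- of kostant poly2})$; once the orthonormal basis $v_1, \dots, v_{d_\delta}$ with $V_\delta^M = \C v_1$ is fixed as in Theorem \ref{theo:basis of the eigen space}, the proof reduces to citing Lemma \ref{Lem:domain of holomorphicity of h-delta }, Remark \ref{rem:continuous extension of h delta}, Remark \ref{properties of h-delta} and Corollary \ref{decay of h-delta}.
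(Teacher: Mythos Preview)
Your proposal is correct and matches the paper's approach exactly: the paper introduces this lemma with the phrase ``Let us now recollect the properties we have obtained for the function $h^\delta$,'' and provides no separate proof, treating it purely as a summary of Lemma \ref{Lem:domain of holomorphicity of h-delta }, Remark \ref{rem:continuous extension of h delta}, Remark \ref{properties of h-delta}, and Corollary \ref{decay of h-delta}. Your additional remark on the $Hom(V_\delta, V_\delta^M)$-valuedness via the basis choice $V_\delta^M = \C v_1$ is a welcome clarification that the paper leaves implicit.
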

The above Lemma shows that for each $\delta \in \what{K}_M$, $h^\delta \in \mc S_\delta(\mf a^*_\e)$ (see Definition \ref{def:ch2:delta SW space in the image side}~).
Hence by Theorem \ref{theo:ch2:main} the inversion $\mathcal I h^\delta$ given by
\begin{equation}
\label{Inversionofthedelta spherical transform}
\mathcal I h^\delta(x) =
\int_{\mathfrak a^{*+}} \Phi_{\lambda, \delta}(x) h^\delta(\lambda)
|\textbf{c}(\lambda)|^{-2} d \lambda,
\end{equation}
belongs to  the $p$th Schwartz class ${\mc S^{p}}^\delta(X)$ where $p$ is determined by the chosen $\e$ by $p = \frac{2}{1+ \e}$.\\
It is clear from (\ref{Inversionofthedelta spherical transform}), (\ref{structural form-2}) and (\ref{Inversion}) that $f^\delta \equiv \sqrt{d_\delta}~ \mc I h^\delta $. Hence for each $\delta$, the function $trf^\delta$ satisfies the Schwartz space decay condition:  for each $\textbf{D},\textbf{ E} \in \mathcal
U(\mathfrak g)$ and $n \in \Z^+ \cup\{0\}$,
\begin{equation}
\label{OO} \sup_{x \in X} |tr f^\delta(\textbf{D}; x;\textbf{ E}) |(1+|x|)^n
\varphi_0^{-\frac{2}{p}}(x) \leq + \infty.
\end{equation}
The function $f$
obtained in (\ref{Inversion}) was proved to be left-$K$-finite. Hence $f \in \mc S^p(X)$ where  $p= \f{2}{1+\e}$.

Finally we shall show that $P_\lambda f(x) = f_\lambda(x)$
(~$\lambda \in \mathfrak a^*_\e$~) where the function $f$ is obtained in (\ref{Inversion}). As $f$ is left $K$ finite $P_\lambda f$
can be decomposed as follows
\begin{align}\label{final decomposition}
P_\lambda f(x) &=\sum_{\delta \in \Gamma(f)}tr(P_\lambda f)^\delta(x) \nonumber\\
&=\sum_{\delta \in \Gamma(f)}trP_\lambda ( f)^\delta(x),
\end{align}
where the last line follows by using Proposition \ref{delta projection of
P-lambda}, and $\Gamma(f)$ being a finite subset of $\widehat{K}_M$. Now from the above discussion and  Proposition \ref{delta projection of P-lambda} we get the following:
\begin{equation}
P_\lambda ( f)^\delta(x) = \Phi_{\lambda, \delta} (x)
\widetilde{f^\delta}(\lambda)=  \Phi_{\lambda, \delta} (x)
h^\delta(\lambda)= f_\lambda^\delta(x),~~~\lambda \in \mathfrak a^*_\e.
\end{equation}
Thus (\ref{final decomposition}) can be reformulated as $P_\lambda f(x) = \sum_{\delta \in \Gamma(f)}tr
f_\lambda^\delta(x) $,
which immediately gives $P_\lambda f(x) = f_\lambda(x)$ for all
$\lambda \in \mathfrak a^*_\e$ and $x \in X$. We give the gist
of what we have shown in this section in the form of the following
theorem
\begin{Theorem}
Any continuous function $g: (x, \lambda) \mapsto g_\lambda(x)$
defined on $X \times \mathfrak a^*_\e$ with certain $\e \geq 0 $ and satisfying the conditions of Definition \ref{def:K-finite subspace of teh image under sp-projc} is of the form $g_\lambda(x) =P_\lambda f(x) $ for some
left-$K$-finite function $f \in \mc S^p(X)$ where $p = \frac{2}{1 + \e}$ .
\end{Theorem}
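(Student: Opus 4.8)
This statement packages the results of the section, so the proof consists in stringing together the preceding lemmas. The plan is: given $(x,\lambda)\mapsto g_\lambda(x)$ in $\mc P_\e(X)_K$ (Definition~\ref{def:K-finite subspace of teh image under sp-projc}), construct a candidate $f$ on $X$ by Fourier inversion, verify successively that $f$ is left-$K$-finite, that $f\in\mc S^p(X)$ with $p=2/(1+\e)$, and that $P_\lambda f=g_\lambda$. First I would put $f(x)=\int_{\mf a^{*+}}g_\lambda(x)\,|\mathbf c(\lambda)|^{-2}\,d\lambda$. The decay \eqref{eq:decay/growth } of $g_\lambda$ in $\lambda$, together with the polynomial estimate \eqref{estimate of  c funct.} for $|\mathbf c(\lambda)|^{-2}$, makes this integral as well as every $(-1)^n\int_{\mf a^{*+}}(1+\lambda^2)^n g_\lambda(x)|\mathbf c(\lambda)|^{-2}d\lambda$ absolutely and locally uniformly convergent, so $f\in\mc C^\infty(X)$, and since each $g_\lambda\in\mc E_\lambda(X)$ one gets $\mathbf L^n f$ equal to the $n$-th of these integrals. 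Because the finite set $\Gamma\subset\what K_M$ supporting $g_\lambda$ does not depend on $\lambda$, the $\lambda$-integral commutes with the finite Peter--Weyl sum; a short computation with $K$-convolution, using that $tr g_\lambda^\delta$ is of left-$\check\delta$-type, shows each $\psi_\delta(x)=\int_{\mf a^{*+}}tr g_\lambda^\delta(x)|\mathbf c(\lambda)|^{-2}d\lambda$ is scalar left-$\check\delta$-type, hence $f$ is left-$K$-finite with $tr f^\delta=\psi_\delta$.

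Next I would extract the product form of the $\delta$-projections $g_\lambda^\delta$. Since $g_\lambda^\delta\in\mc E_\lambda^\delta(X)$, the function $tr g_\lambda^\delta$ lies in $\mc E_\lambda(\check\delta,X)$, so for $\Im\lambda\le 0$ Helgason's basis theorem (Theorem~\ref{theo:basis of the eigen space}) writes it in the Helgason basis, and evenness in $\lambda$ propagates the resulting identity to the whole strip: there is a $(1\times d_\delta)$ matrix $h^\delta(\lambda)$ with $g_\lambda^\delta(ka_t.0)=\sqrt{d_\delta}\,\Phi_{\lambda,\delta}(ka_t.0)\,h^\delta(\lambda)$ (Lemma~\ref{structure Lemma}). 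Expressing a nonzero matrix entry $\Phi_{\lambda,\delta,1}(a_t)$ through the Jacobi function \eqref{Jacobi function}, choosing $t_0>0$ with $\varphi_{\lambda}^{\alpha+r,\beta+s}(t_0)\neq 0$, and using the hypothesis built into $\mc P_\e(X)_K$ that $g_\lambda^\delta$ vanishes at the purely imaginary zeros of $Q_\delta(1-i\lambda)$ in $Int\,\mf a^*_\e$, I would conclude that $h^\delta$ and $g^\delta:=Q_\delta(1-i\lambda)^{-1}h^\delta$ are holomorphic on $Int\,\mf a^*_\e$, extend continuously to the closed strip, and that $g^\delta$ is even (Lemma~\ref{Lem:domain of holomorphicity of h-delta }).

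The decisive---and, I expect, hardest---step is the rapid decay of $h^\delta$ in $\lambda$. One cannot merely divide the Schwartz-type bound for $g_\lambda^\delta$ (inherited via \eqref{decay of operator valued projection}) by $\Phi_{\lambda,\delta,1}(a_t)$, because the Jacobi factor degenerates as $|\lambda|\to\infty$. Following Bray, the remedy is to evaluate at $t=1/|\lambda|^2$: Lemma~\ref{Lemma of Bray} furnishes a $\lambda$-uniform lower bound $|\varphi_{\lambda}^{\alpha+r,\beta+s}(1/|\lambda|^2)|\ge C_{\Lambda,\alpha+r,\beta+s}>0$ for $|\lambda|>\Lambda$, while, off a disk $B^\Lambda(0)$ large enough to contain the zeros of $Q_\delta(1+i\lambda)$, the degree-$2r$ polynomial $Q_\delta(1+i\lambda)Q_\delta(1-i\lambda)(\sinh(1/|\lambda|^2))^r$ is bounded below by a constant. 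Plugging these into the product form gives $\sup_\lambda|g^\delta_j(\lambda)|(1+|\lambda|)^m<\infty$ for all $m$; multiplying back by $Q_\delta(1-i\lambda)$, whose modulus is bounded below off $B^\Lambda(0)$, and using holomorphy on the compact complement gives the same for $h^\delta_j$. Derivatives in $\lambda$ are dealt with by the Leibniz rule, estimating $\partial_\lambda^k\varphi_{\lambda}^{\alpha+r,\beta+s}$ by \eqref{Jacobi at zero-2} and feeding back the already-established bounds on the lower-order $\partial_\lambda^j g^\delta_j$; this is the content of Proposition~\ref{Imp-pro} and Corollary~\ref{decay of h-delta}.

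At this stage Lemma~\ref{lem:properties of h delta function} says $h^\delta\in\mc S_\delta(\mf a^*_\e)$ for each $\delta\in\Gamma$ (Definition~\ref{def:ch2:delta SW space in the image side}), so Theorem~\ref{theo:ch2:main} places the inverse $\delta$-spherical transform $\mc I h^\delta$ of \eqref{Inversionofthedelta spherical transform} in ${\mc S^{p}}^\delta(X)$ with $p=2/(1+\e)$. Comparing \eqref{Inversionofthedelta spherical transform}, \eqref{structural form-2} and \eqref{Inversion} identifies $f^\delta=\sqrt{d_\delta}\,\mc I h^\delta$, so $tr f^\delta$ obeys the $\mc S^p$-seminorm bounds, and since $f$ is left-$K$-finite this forces $f\in\mc S^p(X)$. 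To finish, I would check $P_\lambda f=g_\lambda$: decomposing $P_\lambda f=\sum_{\delta\in\Gamma}tr(P_\lambda f)^\delta$ and applying Proposition~\ref{delta projection of P-lambda} gives $(P_\lambda f)^\delta=\Phi_{\lambda,\delta}\widetilde{f^\delta}$ with $\widetilde{f^\delta}(\lambda)=h^\delta(\lambda)$, read off from the structural form together with the inversion of the $\delta$-spherical transform; hence $(P_\lambda f)^\delta=\Phi_{\lambda,\delta}h^\delta=g_\lambda^\delta$, and summing the traces over the finite set $\Gamma$ yields $P_\lambda f=g_\lambda$ throughout $\mf a^*_\e\times X$.
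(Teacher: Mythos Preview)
Your proposal is correct and follows essentially the same approach as the paper's own proof: define $f$ by the inversion integral \eqref{Inversion}, establish left-$K$-finiteness, use the structural form from Lemma~\ref{structure Lemma}, prove holomorphy and Schwartz decay of $h^\delta$ via the Jacobi representation and Bray's lower bound at $t=1/|\lambda|^2$ (Proposition~\ref{Imp-pro}), invoke Theorem~\ref{theo:ch2:main} to place $\mc I h^\delta$ in ${\mc S^{p}}^\delta(X)$, and finally identify $P_\lambda f=g_\lambda$ componentwise through Proposition~\ref{delta projection of P-lambda}. The order, the key lemmas, and the choice of the Bray trick as the decisive step are all exactly as in the paper.
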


\section{\textbf{Inverse Paley-Wiener Theorem}}
\label{sec: Inverse Paley-Wiener Theorem}

We begin the section with a definition.
\begin{Definition}
\label{def:P-R-X}
Let $P_R(X)$ be the class of scalar valued functions $(\lambda, x) \mapsto f_\lambda(x)$ on $\mf a^* \times X$ satisfying:
\begin{enumerate}
  \item the map $\lambda \mapsto f_\lambda(\cdot)$ is an even,  compactly supported $\mc C^\infty$ function on $\mf a^*$ with it support lying in $[-R, R]$ (note that our group $G$ is of real rank-one and thus we have identified $\mf a^*$ with $\R$),
  \item  for each $\lambda \in \mf a^*$, $x \mapsto f_\lambda(x)$ is a $C^\infty$ function on $X$ and $f_\lambda(\cdot) \in \mc E_\lambda(X)$.
\end{enumerate}

\end{Definition}
In this section we shall try to characterize the space $P_R(X)$ as an image of certain subspace of $L^2(X)$ under the spectral projection. \\
We need to recall some basic results regarding a certain $G$-invariant domain $\Xi$ in $G_\C/K_\C$ called the \emph{complex crown}.  Here $X_\C = G_\C/K_\C$ is the natural  complexification (see \cite{krotz-Olafsson-Stanton05}) of the symmetric space $X$. The domain can be explicitly written as $\Xi = G \exp{i \Omega}\cdot x_0 $, where $x_0 = eK$ and $\Omega = \{ H \in \mf a~|~ |\alpha(H)| < \frac{\pi}{2}, \alpha \in \Sigma \}$ \cite{Krotz-Stanton-05}. Let $\mc G(\Xi) $ be space of all holomorphic functions on the complex crown. For $\lambda \in i \mf a^*$, the function $H \mapsto \varphi_{\lambda}(\exp{iH})$ ($H \in \mf a$) can be analytically continued in the tube domain $\mf a + 2 i \Omega$ \cite{Krotz-Stanton-04}.  Almost all the basic analysis on the crown domain uses a fundamental tool called the \emph{orbital integrals} developed by Gindikin et al. \cite{Krotz-Gindikin02}. Let $h$ be a function on $\Xi$ suitably decreasing at the boundary and $Y \in 2 \Omega$, then the orbital integral is defined by
\begin{equation}
\label{eq: orbital integral}
O_{h}(iY) = \int_G h\left( g \exp{\left(\frac{i}{2}Y \right)}\cdot x_0 \right)~dg.
\end{equation}
If a holomorphic function $\theta$ on the tube $\mf a + 2 i \Omega$ has the representation
$$\theta(Y) = \int_{i \mf a^*} g(\lambda)~\varphi_\lambda(\exp{Y}) |\textbf{c}(\lambda)|^{-2} d\lambda,$$ then we define (with certain condition on $g$, see \cite{krotz-Olafsson-Stanton05}) the operator
$$D\theta(Y)=\int_{i \mf a^*}g(\lambda) \psi_\lambda(Y)  |\textbf{c}(\lambda)|^{-2} d\lambda,$$ where $\psi_\lambda(Y) =e^{\langle\lambda,Y \rangle}+e^{\langle\lambda,-Y \rangle}$. The operator $D$ is a pseudo-differential shift operator \cite{krotz-Olafsson-Stanton05}. The following is an inverse Paley-Wiener theorem for the Helgason Fourier transform.
\begin{Theorem}
\label{theo-thangavelu}
\emph{[Thangavelu \cite[Theorem 2.3]{Veluma07}]}\\
Let $f \in L^2(X)$, then the Helgason Fourier transform $\mc F f(\lambda, kM)$ is supported in $|\lambda| \leq R$ if and only if the function  $f$ has a holomorphic extension $F \in \mc G(\Xi)$ which satisfies the estimate
\begin{equation}
\label{eq:holo-estimate}
DO_{|F|^2} (iY) \leq C e^{2 R|Y|} ,~~\mbox{~where $C$ is independent of $Y \in 2 \Omega$.}
\end{equation}
\end{Theorem}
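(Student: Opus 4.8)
The plan is to derive both implications from a single Gutzmer-type identity that expresses the orbital integral of $|F|^2$ through the Plancherel density of $f$, after which the shift operator $D$ converts the elementary spherical functions into pure exponentials. Suppose $f\in L^2(X)$ has a holomorphic extension $F\in\mc G(\Xi)$ decaying at $\partial\Xi$ fast enough for the orbital integral $O_{|F|^2}(iY)$ to converge for $Y\in 2\Omega$. Then, by the orbital-integral calculus of Gindikin et al.\ (\cite{Krotz-Gindikin02, krotz-Olafsson-Stanton05}) together with the Plancherel theorem for the Helgason Fourier transform, one has
\be
\label{eq:plan-gutzmer}
O_{|F|^2}(iY)=\int_{\mf a^*}\|\mc F f(\lambda,\cdot)\|_{L^2(K/M)}^2\,\varphi_\lambda(\exp 2Y)\,|\textbf{c}(\lambda)|^{-2}\,d\lambda,\qquad Y\in 2\Omega,
\ee
where $\varphi_\lambda$ is taken in its holomorphic continuation to $\mf a+2i\Omega$ and the argument is doubled because $|F|^2=F\overline F$ combines two holomorphic factors carried to $\exp(\tfrac i2 Y)\cdot x_0$ and to its conjugate. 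Writing $d\mu(\lambda)=\|\mc F f(\lambda,\cdot)\|_{L^2(K/M)}^2\,|\textbf{c}(\lambda)|^{-2}\,d\lambda$ --- a finite, even, nonnegative measure on $\mf a^*\cong\R$, finiteness coming from Plancherel since $f\in L^2(X)$ --- and applying $D$ (using its definition after the substitution $Z=2Y$), we obtain
\be
\label{eq:plan-D}
D O_{|F|^2}(iY)=\int_{\mf a^*}\bigl(e^{2\langle\lambda,Y\rangle}+e^{-2\langle\lambda,Y\rangle}\bigr)\,d\mu(\lambda),
\ee
whose integrand is entire in $Y$, so that (\ref{eq:plan-D}) continues $DO_{|F|^2}$ to all of $\mf a$; for $\lambda$ and $Y$ identified with real numbers this integrand is $2\cosh(2\langle\lambda,Y\rangle)$, of exponential type $2|\lambda|$ in $Y$, which is the factor $2$ appearing in the statement.

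For the necessity of the support condition, assume $\mc F f(\lambda,\cdot)=0$ for a.e.\ $|\lambda|>R$. In the inversion formula (\ref{HFT Inversion}) the $\lambda$-integration is then over the compact set $|\lambda|\le R$, and for each fixed real $\lambda$ and $k\in K$ the kernel $x\mapsto e^{-(i\lambda+1)H(x^{-1}k)}$ extends holomorphically from $X$ to the crown $\Xi$ (the complexified Iwasawa projection being holomorphic there); a dominated-convergence argument then produces a holomorphic extension $F\in\mc G(\Xi)$ of $f$ and at the same time shows $O_{|F|^2}(iY)<\infty$ on $2\Omega$, so that (\ref{eq:plan-gutzmer}) applies. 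Inserting $F$ into (\ref{eq:plan-D}), the integral runs over $|\lambda|\le R$ only, so the integrand is bounded there by $2e^{2R|Y|}$, whence $DO_{|F|^2}(iY)\le 2\mu(\mf a^*)\,e^{2R|Y|}=C\,e^{2R|Y|}$ with $C$ a multiple of $\|f\|_2^2$, independent of $Y$.

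For the converse, suppose $f$ admits a holomorphic extension $F\in\mc G(\Xi)$ with $O_{|F|^2}$ finite on $2\Omega$ and $DO_{|F|^2}(iY)\le Ce^{2R|Y|}$. The finiteness together with this growth bound (and the fact that $DO_{|F|^2}(0)$ is a finite multiple of $\|f\|_2^2$) supplies the boundary decay needed to justify (\ref{eq:plan-gutzmer}) and (\ref{eq:plan-D}) for this $F$, so $\int_{\mf a^*}\bigl(e^{2\langle\lambda,Y\rangle}+e^{-2\langle\lambda,Y\rangle}\bigr)\,d\mu(\lambda)\le Ce^{2R|Y|}$ for all $Y\in\mf a$. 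Parametrising $\langle\lambda,Y\rangle=\lambda y$ with $y\in\R$, this reads $\int_{\R}e^{2\lambda y}\,d\mu(\lambda)\le Ce^{2R|y|}$ (and its reflection), so the two-sided Laplace transform of $\mu$ has at most exponential type $2R$. A classical argument then pins down the support: for $y>0$ and $\e>0$, $\mu(\{\lambda>R+\e\})\,e^{2(R+\e)y}\le\int_{\lambda>R+\e}e^{2\lambda y}\,d\mu\le Ce^{2Ry}$, whence $\mu(\{\lambda>R+\e\})\le Ce^{-2\e y}\to 0$ as $y\to\infty$ and therefore $\mu(\{\lambda>R\})=0$; by symmetry $\mu(\{\lambda<-R\})=0$. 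Thus $\mu$ is supported in $|\lambda|\le R$, and since $|\textbf{c}(\lambda)|^{-2}>0$ a.e.\ this forces $\mc F f(\lambda,\cdot)=0$ in $L^2(K/M)$ for a.e.\ $|\lambda|>R$.

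The routine part of this scheme is the final measure-theoretic step. The main obstacle is establishing the orbital-integral identity (\ref{eq:plan-gutzmer}) rigorously in the $L^2$ setting: one must control the convergence of $O_{|F|^2}$ near $\partial\Xi$, justify the spherical expansion and the term-by-term action of $D$, and --- crucially for the sufficiency direction --- verify that the \emph{given} holomorphic extension $F$ is precisely the one to which the identity applies, all of which rest on the crown-domain analysis of \cite{Krotz-Gindikin02, Krotz-Stanton-04, Krotz-Stanton-05, krotz-Olafsson-Stanton05}. A secondary but delicate point is the exact bookkeeping of the exponential type --- that, in the normalization used for $\Omega$ and for $\langle\cdot,\cdot\rangle$, the growth rate $2R$ of $DO_{|F|^2}$ corresponds to the sharp spectral bound $|\lambda|\le R$ rather than $|\lambda|\le 2R$, the factor $2$ ultimately coming from $|F|^2=F\overline F$ and the $\tfrac12$ in the definition of the orbital integral.
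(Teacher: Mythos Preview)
The paper does not prove this theorem. Theorem~\ref{theo-thangavelu} is quoted verbatim from Thangavelu \cite[Theorem~2.3]{Veluma07} and is used as a black box in the proof of the subsequent main theorem of Section~\ref{sec: Inverse Paley-Wiener Theorem}; no argument for it is given here. So there is no ``paper's own proof'' to compare your proposal against.

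That said, your outline is the correct strategy and is essentially the Gutzmer-identity approach that underlies Thangavelu's result and the earlier work of Kr\"otz--Stanton and Kr\"otz--\'Olafsson--Stanton: express $O_{|F|^2}$ via Plancherel as a spherical expansion against the spectral density $\|\mc F f(\lambda,\cdot)\|_{L^2(K/M)}^2\,|\textbf{c}(\lambda)|^{-2}$, apply the shift operator $D$ to turn $\varphi_\lambda$ into pure exponentials, and then read off the spectral support from the exponential type by an elementary Laplace-transform (Chebyshev) argument. The genuine obstacles you flag --- rigorously establishing the Gutzmer identity in the $L^2$ setting, controlling convergence of $O_{|F|^2}$ near $\partial\Xi$, and tracking the precise normalizations so that the exponent comes out as $2R$ rather than $R$ or $4R$ --- are exactly where the substance lies, and they are handled not in this paper but in \cite{Veluma07} and the crown-domain references \cite{Krotz-Gindikin02, Krotz-Stanton-04, Krotz-Stanton-05, krotz-Olafsson-Stanton05}. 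One small caution on your bookkeeping: with the orbital integral defined here via $g\exp(\tfrac{i}{2}Y)\cdot x_0$ and $D$ defined to send $\varphi_\lambda(\exp Y)$ to $\psi_\lambda(Y)=e^{\langle\lambda,Y\rangle}+e^{-\langle\lambda,Y\rangle}$, you should double-check that your intermediate identity \eqref{eq:plan-gutzmer} carries $\varphi_\lambda(\exp iY)$ rather than $\varphi_\lambda(\exp 2Y)$, and that the factor of $2$ in the final exponent arises consistently; the rest of your argument is unaffected.
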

Our main theorem in this section is a consequence of the above theorem.
\begin{Theorem}
A function $f_\lambda(x)$ ($ x \in X,~\lambda \in \mf a^*$) is in  $P_R(X)$ if and only if
 $f_\lambda(x) = (f \ast \varphi_\lambda)(x)$ ($\forall~ x \in X,~\lambda \in \mf a^*$) for some $f \in L^2(X)$ which admits a holomorphic extension $F \in \mc G(\Xi)$ satisfying the estimate (\ref{eq:holo-estimate})
\end{Theorem}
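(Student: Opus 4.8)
The plan is to reduce everything, via the Helgason--Fourier transform and the $K$-type apparatus of Sections~\ref{sec: Necessary Conditions}--\ref{sec: Sufficient Conditions} specialised to $p=2$ (so $\e=0$ and $\mf a^*_\e=\mf a^*=\R$), to Thangavelu's Theorem~\ref{theo-thangavelu}. \textbf{Easy implication.} Suppose $f\in L^2(X)$ has a holomorphic extension $F\in\mc G(\Xi)$ satisfying (\ref{eq:holo-estimate}); by Theorem~\ref{theo-thangavelu}, $\mc Ff(\lambda,kM)$ vanishes for $|\lambda|>R$. Writing $P_\lambda f$ through (\ref{P lambda}), $P_\lambda f(x)=\int_K\mc Ff(\lambda,k)\,\mf e_{\lambda,k}(x)\,dk$, and recalling that each kernel $\mf e_{\lambda,k}$ is an $\textbf{L}$-eigenfunction (see (\ref{eq:kernel as eigen funct of laplacian})), one reads off that $P_\lambda f\in\mc E_\lambda(X)$ for all $\lambda\in\mf a^*$, that $\lambda\mapsto P_\lambda f$ is supported in $[-R,R]$ and even (Remark~\ref{imp}); the $\mc C^\infty$-dependence on $\lambda$ follows because the holomorphic extension of $f$ to the crown forces $\lambda\mapsto\mc Ff(\lambda,k)$ to extend holomorphically to a strip (see \cite{Krotz-Stanton-04, krotz-Olafsson-Stanton05}), so with compact support it is $\mc C^\infty$. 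Hence $P_\lambda f\in P_R(X)$.

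\textbf{Converse: construction.} Let $f_\lambda(x)\in P_R(X)$ and, in analogy with (\ref{Inversion}), set $f(x)=\f{1}{2}\int_{\mf a^*}f_\lambda(x)\,|\textbf{c}(\lambda)|^{-2}\,d\lambda$; since $\lambda\mapsto f_\lambda$ is continuous with support in the compact set $[-R,R]$ and $|\textbf{c}(\lambda)|^{-2}$ is polynomially bounded by (\ref{estimate of  c funct.}), this converges absolutely for each $x$ and $f\in\mc C^\infty(X)$. Fix $\delta\in\what{K}_M$. Since $f_\lambda\in\mc E_\lambda(X)$, the projection $f_\lambda^\delta$ (as in (\ref{eq:delta projection of f-lambda-x})) lies in $\mc E_\lambda^\delta(X)$, and the argument of Lemma~\ref{structure Lemma} --- carried out one $\delta$ at a time, with no use of $K$-finiteness --- gives the structural form $f_\lambda^\delta(ka_t\cdot 0)=\sqrt{d_\delta}\,\Phi_{\lambda,\delta}(ka_t\cdot 0)\,h^\delta(\lambda)$ for a $(1\times d_\delta)$ matrix $h^\delta(\lambda)$, $\lambda\in\mf a^*$. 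Integrating this against $|\textbf{c}(\lambda)|^{-2}\,d\lambda$ over $\mf a^{*+}$ and comparing with (\ref{Inversionofthedelta spherical transform}) yields $f^\delta=\sqrt{d_\delta}\,\mc Ih^\delta$.

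\textbf{The crux.} One must show $h^\delta\in\mc S_\delta(\mf a^*_0)$ (Definition~\ref{def:ch2:delta SW space in the image side}) with control of the $\delta$-dependence. The functional equation (\ref{eq:ch2:W action on the image SW space}) comes, as in Remark~\ref{properties of h-delta}, from the evenness of $f_\lambda^\delta$ together with the Kostant relations (\ref{Intro- of kostant poly1}); the holomorphy condition is vacuous ($\mf a^*$ has empty interior as a strip); and, $h^\delta$ being $\mc C^\infty$ with support in $[-R,R]$, the decay (\ref{eq:ch2:SW decay on the image side of delta sp. trans}) is automatic. To bound $h^\delta$ uniformly in $\delta$, I recover $h_j^\delta(\lambda)$ from $f_\lambda^\delta(a_{t_0})_{1j}$ at a fixed small $t_0>0$ after dividing out $Q_\delta(1+i\lambda)$ and the nonvanishing Jacobi factor $\varphi_\lambda^{\alpha+r,\beta+s}(t_0)$, exactly as in the proof of Lemma~\ref{Lem:domain of holomorphicity of h-delta }; then, applying powers of the Casimir $\om_{\mf k}$ of $\mf k$ to $f_\lambda$ (its eigenvalue on $V_\delta$ grows like $|\delta|^2$) together with (\ref{delta-estimate}), the hypothesis that $\lambda\mapsto f_\lambda$ is $\mc C^\infty$ into $\mc C^\infty(X)$ turns into $\sup_{\lambda\in\mf a^*}\|(\f{d}{d\lambda})^m h^\delta(\lambda)\|_{\textbf{2}}(1+|\lambda|)^n\le c_{m,n}(1+|\delta|)^{-N}$ for every $N$. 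By Theorem~\ref{theo:ch2:main} (at $p=2$) and continuity of its inverse, the $L^2$-Schwartz seminorms of $f^\delta=\sqrt{d_\delta}\,\mc Ih^\delta$ are then $\le c'_N(1+|\delta|)^{-N}$, so summing the Peter--Weyl series (\ref{eq:pri:Peter-Weyl decomposition}) --- just as the $\delta$-sum was handled in the proof of Theorem~\ref{ theo: main theorem spectral proj} --- gives $f\in\mc S^2(X)\subset L^2(X)$.

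\textbf{Identification and main obstacle.} Applying the $\delta$-spherical transform to $f^\delta=\sqrt{d_\delta}\,\mc Ih^\delta$ and using that it inverts $\mc I$ (Theorem~\ref{theo:ch2:main}) gives $\widetilde{f^\delta}(\lambda)=\sqrt{d_\delta}\,h^\delta(\lambda)$, supported in $[-R,R]$ for every $\delta$; hence $\mc Ff(\lambda,\cdot)=0$ for $|\lambda|>R$, and Theorem~\ref{theo-thangavelu} (converse implication) provides the holomorphic extension $F\in\mc G(\Xi)$ of $f$ satisfying (\ref{eq:holo-estimate}). Finally, by Proposition~\ref{delta projection of P-lambda} (whose computation extends to real $\lambda$ since $f\in\mc S^2(X)$), $(P_\lambda f)^\delta(x)=\Phi_{\lambda,\delta}(x)\widetilde{f^\delta}(\lambda)=\sqrt{d_\delta}\,\Phi_{\lambda,\delta}(x)h^\delta(\lambda)=f_\lambda^\delta(x)$ for all $\lambda\in\mf a^*$; taking traces and summing over $\delta$ via (\ref{eq:Peter-Weyl decomposition of P lambda}) gives $P_\lambda f=f_\lambda$, completing the proof. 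The main obstacle is the crux step: converting the purely qualitative smoothness of $\lambda\mapsto f_\lambda$ into quantitative $K$-type decay of $h^\delta(\lambda)$ and then summing the Peter--Weyl series --- the one place where the rank-one hypothesis and the Eguchi--Kawata/Jana isomorphism (Theorem~\ref{theo:ch2:main}) are genuinely used; everything else is formal or a direct appeal to Theorem~\ref{theo-thangavelu}.
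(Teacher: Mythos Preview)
Your overall plan is sound, but your converse direction takes a genuinely different --- and harder --- route than the paper's, and the part you flag as the ``main obstacle'' is not actually closed.

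The paper does \emph{not} try to show $f\in\mc S^2(X)$ via Theorem~\ref{theo:ch2:main} and $\delta$-uniform Schwartz estimates on $h^\delta$. After obtaining the structural form $f_\lambda^\delta=\sqrt{d_\delta}\,\Phi_{\lambda,\delta}\,h^\delta(\lambda)$, it \emph{re-collapses} the $\delta$-sum into the Poisson kernel: writing $\Phi_{\lambda,\delta}(x)=\int_K e^{-(i\lambda+1)H(x^{-1}k)}\delta(k)\,dk$ and interchanging, one finds
\[
f(x)=\int_{\mf a^{*+}}\int_K e^{-(i\lambda+1)H(x^{-1}k)}\,h(\lambda,k)\,|\textbf{c}(\lambda)|^{-2}\,dk\,d\lambda,
\qquad h(\lambda,k)=\sum_{\delta}\sqrt{d_\delta}\,\mathrm{tr}\bigl(\delta(k)h^\delta(\lambda)\bigr),
\]
which is precisely the inverse Helgason--Fourier transform of the $\lambda$-compactly supported function $h$. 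Thangavelu's Theorem~\ref{theo-thangavelu} (with Plancherel) then gives $f\in L^2(X)$ and the holomorphic extension in one stroke. No $\delta$-by-$\delta$ summation in $\mc S^2(X)$ is needed, and only $f\in L^2(X)$ --- not $f\in\mc S^2(X)$ --- is claimed.

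Your crux step, by contrast, attempts to extract $\|h^\delta(\lambda)\|_{\textbf{2}}\le c_N(1+|\delta|)^{-N}$ by evaluating $f_\lambda^\delta(a_{t_0})_{1j}$ at a \emph{fixed} $t_0>0$ and dividing by $\Phi_{\lambda,\delta,1}(a_{t_0})$. But from (\ref{Jacobi function}),
\[
\Phi_{\lambda,\delta,1}(a_{t_0})=Q_\delta(1+i\lambda)(\alpha+1)_r^{-1}(\sinh t_0)^r(\cosh t_0)^s\varphi_\lambda^{\alpha+r,\beta+s}(t_0),
\]
and the factors $(\sinh t_0)^r$, $(\alpha+1)_r^{-1}$, $Q_\delta(1+i\lambda)$ all carry $\delta$-dependence (through $r,s$) that is not controlled at a single $t_0$; dividing by this quantity does not produce the claimed $(1+|\delta|)^{-N}$ decay. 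The $\omega_{\mf k}$-argument gives rapid decay of $f_\lambda^\delta(x)$ in $\delta$ at each fixed $x$, but converting that into decay of $h^\delta(\lambda)$ still requires a $\delta$-uniform lower bound on $\Phi_{\lambda,\delta,1}$, which you have not supplied. Moreover, the target $f\in\mc S^2(X)$ is stronger than the theorem asserts, so even if this step could be repaired it is more than is needed. The paper's re-collapsing trick sidesteps the whole issue; that is the idea you are missing.
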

\begin{proof}
Let $f_\lambda(x) \in P_R(X)$; then we get a function
\begin{equation}
\label{eq-sec-4-1}
f(x)  = \int_{\mf a^{*+}} f_\lambda(x) ~|\textbf{c}(\lambda)|^{-2} d\lambda.
\end{equation}
The integral (\ref{eq-sec-4-1}) is obviously convergent and $f \in \mc C^\infty(X)$. A simple application of the Peter-Weyl theorem gives
\begin{equation}
\label{eq-sec-4-2}
f(x)=\int_{\mf a^{*+}} \left[ \sum_{\delta \in \what{K}_M} tr~ f_\lambda^\delta(x)\right]~|\textbf{c}(\lambda)|^{-2} d\lambda.
\end{equation}
Now for each $\delta \in \what{K}_M$ and $\lambda \in  \mf a^*$, $f_\lambda^\delta \in \mc E_\lambda^\delta(X)$, hence by Theorem \ref{structural form-2} we can write $f_\lambda^\delta(x) = \sqrt{d_\delta} \Phi_{\lambda, \delta}(x) h^\delta(\lambda)$. As $\lambda \mapsto f_\lambda^\delta$ is compactly supported and $\Phi_{\lambda, \delta}$ is an entire function so the function $h^\delta$ must have its support in $[-R, R]$. The above structural form can further reduce (\ref{eq-sec-4-2}) as follows:
\begin{align}
\label{sec-4-ali-1}
f(x) & = \int_{\mf a^{*+}} \left[ \sum_{\delta \in \what{K}_M } \sqrt{d_\delta} \sum_{i=1}^{d_\delta} \langle \Phi_{\lambda, \delta}(x)h^\delta(\lambda) v_i, v_i \rangle \right] ~|\textbf{c}(\lambda)|^{-2} d\lambda \nonumber\\
&= \int_{\mf a^{*+}} \left[ \sum_{\delta \in \what{K}_M } \sqrt{d_\delta}\sum_{i=1}^{d_\delta} \langle \Phi_{\lambda, \delta}(x)v_1, v_i \rangle\langle h^\delta(\lambda)v_i,v_1 \rangle \right] |\textbf{c}(\lambda)|^{-2} d\lambda \nonumber\\
&=\int_{\mf a^{*+}} \left[ \sum_{\delta \in \what{K}_M } \sqrt{d_\delta} \sum_{i=1}^{d_\delta} \langle \int_K e^{-(i \lambda +1)H(x^{-1}k) } \delta(k) dk~ v_1, v_i \rangle~   \langle h^\delta(\lambda) v_i,v_1 \rangle \right] |\textbf{c}(\lambda)|^{-2} d\lambda\nonumber\\
&=\int_{\mf a^{*+}}\int_K  e^{-(i \lambda +1)H(x^{-1}k) }\left[ \sum_{\delta \in \what{K}_M } \sqrt{d_\delta} \sum_{i=1}^{d_\delta} \langle  \delta(k)~v_1, v_i \rangle~\langle h^\delta(\lambda) v_i,v_1 \rangle   \right]|\textbf{c}(\lambda)|^{-2} dk~ d\lambda\nonumber\\
&= \int_{\mf a^{*+}}\int_K  e^{-(i \lambda +1)H(x^{-1}k) } \left[ \sum_{\delta \in \what{K}_M} \sqrt{d_\delta} ~tr(\delta(k ) h^\delta(\lambda)) \right]|\textbf{c}(\lambda)|^{-2} dk d\lambda.
\end{align}
We denote the function $h(\lambda, k) = \sum_{\delta \in \what{K}_M} \sqrt{d_\delta}~ tr(\delta(k ) h^\delta(\lambda))$ in (\ref{sec-4-ali-1}). Then clearly it is a $\mc C^\infty$ function in the $\lambda$ variable and  the function
$\lambda \mapsto \int_K  e^{(-i \lambda +1)H(x^{-1}k) }h(\lambda, k) dk  $
is  even. Also $h(\lambda, \cdot)$ is a compactly supported function and $f$ is nothing but the Helgason Fourier inversion of the function $h$. Hence by \cite[Theorem 2.3]{Veluma07}, the function $f \in L^2(X)$ and it admits a holomorphic extension on the complex crown  satisfying the estimate (\ref{eq:holo-estimate}).\\
On the other hand if $g \in L^2(X)$ then, for all $\lambda\in\mf a^*$ and $x \in X$,  $$P_\lambda g(x)= g \ast \varphi_\lambda(x)=\int_{K} e^{-(i \lambda + 1) H(x^{-1}k)} \widetilde{g}(\lambda, k)~dk.$$ Furthermore if $g$ can be extended holomorphically to some $\overline{g} \in \mc G(\Xi)$ with $\overline{g}$ satisfying (\ref{eq:holo-estimate}) then by Theorem \ref{theo-thangavelu},  $\widetilde{g}(\lambda, \cdot)$ is supported in $[-R, R]$. It is now easy to show that $g \ast \varphi_\lambda(\cdot) \in P_R(X)$.
\end{proof}
\providecommand{\bysame}{\leavevmode\hbox to3em{\hrulefill}\thinspace}
\providecommand{\MR}{\relax\ifhmode\unskip\space\fi MR }
\providecommand{\MRhref}[2]{%
  \href{http://www.ams.org/mathscinet-getitem?mr=#1}{#2}
}
\providecommand{\href}[2]{#2}


\begin{thebibliography}{GK{\'O}06b}

\bibitem[Ank87]{Anker87}
Jean-Philippe Anker, \emph{La forme exacte de l'estimation fondamentale de
  {H}arish-{C}handra}, C. R. Acad. Sci. Paris S\'er. I Math. \textbf{305}
  (1987), no.~9, 371--374.

\bibitem[Ank91]{Anker91}
\bysame, \emph{The spherical {F}ourier transform of rapidly decreasing
  functions. {A} simple proof of a characterization due to {H}arish-{C}handra,
  {H}elgason, {T}rombi, and {V}aradarajan}, J. Funct. Anal. \textbf{96} (1991),
  no.~2, 331--349.

\bibitem[Ank92]{Anker92}
\bysame, \emph{Sharp estimates for some functions of the {L}aplacian on
  noncompact symmetric spaces}, Duke Math. J. \textbf{65} (1992), no.~2,
  257--297.

\bibitem[Bra96]{Bray96}
William~O. Bray, \emph{Generalized spectral projections on symmetric spaces of
  noncompact type: {P}aley-{W}iener theorems}, J. Funct. Anal. \textbf{135}
  (1996), no.~1, 206--232.

\bibitem[EK76]{Eguchi76}
Masaaki Eguchi and Atsutaka Kowata, \emph{On the {F}ourier transform of rapidly
  decreasing functions of {$L\sp{p}$}\ type on a symmetric space}, Hiroshima
  Math. J. \textbf{6} (1976), no.~1, 143--158.

\bibitem[GK02]{Krotz-Gindikin02}
Simon Gindikin and Bernhard Kr{\"o}tz, \emph{Invariant {S}tein domains in
  {S}tein symmetric spaces and a nonlinear complex convexity theorem}, Int.
  Math. Res. Not. (2002), no.~18, 959--971.

\bibitem[GK{\'O}06a]{Krotz-Gindikin-I06}
Simon Gindikin, Bernhard Kr{\"o}tz, and Gestur {\'O}lafsson, \emph{Holomorphic
  horospherical transform on noncompactly causal spaces}, Int. Math. Res. Not.
  (2006), Art. ID 76857, 47.

\bibitem[GK{\'O}06b]{Krotz-Gindikin-II06}
\bysame, \emph{Horospherical model for holomorphic discrete series and
  horospherical {C}auchy transform}, Compos. Math. \textbf{142} (2006), no.~4,
  983--1008.

\bibitem[GV88]{Gangolli88}
Ramesh Gangolli and V.~S. Varadarajan, \emph{Harmonic analysis of spherical
  functions on real reductive groups}, Ergebnisse der Mathematik und ihrer
  Grenzgebiete [Results in Mathematics and Related Areas], vol. 101,
  Springer-Verlag, Berlin, 1988.

\bibitem[HC76]{Harish76}
Harish-Chandra, \emph{Harmonic analysis on real reductive groups. {II}.
  {W}avepackets in the {S}chwartz space}, Invent. Math. \textbf{36} (1976),
  1--55.

\bibitem[Hel70]{Helgason70}
Sigurdur Helgason, \emph{A duality for symmetric spaces with applications to
  group representations}, Advances in Math. \textbf{5} (1970), 1--154 (1970).

\bibitem[Hel74]{Helgason74}
\bysame, \emph{Eigenspaces of the {L}aplacian; integral representations and
  irreducibility}, J. Functional Analysis \textbf{17} (1974), 328--353.

\bibitem[Hel87]{Helgason87}
\bysame, \emph{Some results on {R}adon transforms, {H}uygens' principle and
  {X}-ray transforms}, Integral geometry (Brunswick, Maine, 1984), Contemp.
  Math., vol.~63, Amer. Math. Soc., Providence, RI, 1987, pp.~151--177.

\bibitem[Hel94]{Helgason-gas}
\bysame, \emph{Geometric analysis on symmetric spaces}, Mathematical Surveys
  and Monographs, vol.~39, American Mathematical Society, Providence, RI, 1994.

\bibitem[Hel00]{Helgason-gga}
\bysame, \emph{Groups and geometric analysis}, Mathematical Surveys and
  Monographs, vol.~83, American Mathematical Society, Providence, RI, 2000,
  Integral geometry, invariant differential operators, and spherical functions,
  Corrected reprint of the 1984 original.

\bibitem[Ion00]{Ionescu00}
Alexandru~D. Ionescu, \emph{On the {P}oisson transform on symmetric spaces of
  real rank one}, J. Funct. Anal. \textbf{174} (2000), no.~2, 513--523.

\bibitem[JS07]{Jana}
Joydip Jana and Rudra~P. Sarkar, \emph{On the schwartz space isomorphism
  theorem for the rank-1 symmetric spaces}, Proc. Indian Acad. Sci.(Math. Sci.)
  \textbf{117} (2007), no.~3, 333--348.

\bibitem[Koo84]{Koorn84}
Tom~H. Koornwinder, \emph{Jacobi functions and analysis on noncompact
  semisimple {L}ie groups}, Special functions: group theoretical aspects and
  applications, Math. Appl., Reidel, Dordrecht, 1984, pp.~1--85.

\bibitem[Kos69]{Kostant69}
Bertram Kostant, \emph{On the existence and irreducibility of certain series of
  representations}, Bull. Amer. Math. Soc. \textbf{75} (1969), 627--642.

\bibitem[K{\'O}S05]{krotz-Olafsson-Stanton05}
Bernhard Kr{\"o}tz, Gestur {\'O}lafsson, and Robert~J. Stanton, \emph{The image
  of the heat kernel transform on {R}iemannian symmetric spaces of the
  noncompact type}, Int. Math. Res. Not. (2005), no.~22, 1307--1329.

\bibitem[KS04]{Krotz-Stanton-04}
Bernhard Kr{\"o}tz and Robert~J. Stanton, \emph{Holomorphic extensions of
  representations. {I}. {A}utomorphic functions}, Ann. of Math. (2)
  \textbf{159} (2004), no.~2, 641--724.

\bibitem[KS05]{Krotz-Stanton-05}
B.~Kr{\"o}tz and R.~J. Stanton, \emph{Holomorphic extensions of
  representations. {II}. {G}eometry and harmonic analysis}, Geom. Funct. Anal.
  \textbf{15} (2005), no.~1, 190--245.

\bibitem[Str88]{Strichartz88}
Robert~S. Strichartz, \emph{Local harmonic analysis on spheres}, J. Funct.
  Anal. \textbf{77} (1988), no.~2, 403--433.

\bibitem[Str89]{Strichartz89}
\bysame, \emph{Harmonic analysis as spectral theory of {L}aplacians}, J. Funct.
  Anal. \textbf{87} (1989), no.~1, 51--148.

\bibitem[Str91]{Strichartz91}
\bysame, \emph{{$L\sp p$} harmonic analysis and {R}adon transforms on the
  {H}eisenberg group}, J. Funct. Anal. \textbf{96} (1991), no.~2, 350--406.

\bibitem[Str92]{Strichartz92}
\bysame, \emph{Corrigendum to: ``{H}armonic analysis as spectral theory of
  {L}aplacians'' [{J}.\ {F}unct.\ {A}nal.\ {\bf 87} (1989), no.\ 1, 51--148;
  {MR}1025883 (91c:43015)]}, J. Funct. Anal. \textbf{109} (1992), no.~2,
  457--460.

\bibitem[Sug71]{Sugiura71}
Mitsuo Sugiura, \emph{Fourier series of smooth functions on compact {L}ie
  groups}, Osaka J. Math. \textbf{8} (1971), 33--47.

\bibitem[Tha07]{Veluma07}
S.~Thangavelu, \emph{A {P}aley-{W}iener theorem for the inverse {F}ourier
  transform on some homogeneous spaces}, Hiroshima Math. J. \textbf{37} (2007),
  no.~2, 145--159.

\end{thebibliography}
\end{document}